    \renewcommand*{\backrefalt}[4]{\ifcase #1 (Not cited).\or (Cited p.~#2).\else (Cited pp.~#2).\fi} 
\DeclareMathOperator{\spine}{spine}
\def\subsection{\@startsection{subsection}{1}\z@{.7\linespacing\@plus\linespacing}
    {.5\linespacing}{\normalfont\scshape\centering}}\makeatother 
\title{$\ell^p$ metrics on cell complexes}
\author{Thomas Haettel$^\dagger$}
\address{$^\dagger$IMAG, Univ Montpellier, CNRS, France, IRL 3457, CRM-CNRS, Universit\'{e} de Montr\'{e}al, Canada.}
\thanks{$^\dagger$The first named author was partially supported by l’Agence Nationale de la Recherche (ANR), project ANR-22-CE40-0004.}
\email{thomas.haettel@umontpellier.fr}
\author{Nima Hoda$^\diamond$}
\address{$^\diamond$École normale supérieure, Université
  PSL, CNRS, Paris, France}
\address{$^\diamond$Deptartment of Mathematics, Cornell University,
  Ithaca, NY 14853, USA}
\email{nima.hoda@mail.mcgill.ca}
\thanks{$^\diamond$The second named author was partially supported by the ERC
  grant GroIsRan and an NSERC Postdoctoral Fellowship.}
\author{Harry Petyt$^\star$}
\address{$^\star$University of Oxford, UK}
\email{petyt@maths.ox.ac.uk}
\date{\today}
\keywords{normed cell complex, $\ell^p$ metric, cube complex,
  bicombing, unique geodesicity, Busemann-convexity, smoothness,
  bolicity, mapping class group of a surface, nonpositive curvature,
  geometric group theory}
\subjclass[2020]{57M60,52A21,51F30,20F65,20F67}
\begin{document}

\begin{abstract}
Motivated by the observation that groups can be effectively studied using metric spaces modelled on $\ell^1$, $\ell^2$, and $\ell^\infty$ geometry, we consider cell complexes equipped with an $\ell^p$ metric for arbitrary $p$. Under weak conditions that can be checked locally, we establish nonpositive curvature properties of these complexes, such as Busemann-convexity and strong bolicity. We also provide detailed information on the geodesics of these metrics in the special case of CAT(0) cube complexes.
\end{abstract}

\maketitle

\tableofcontents

\section{Introduction}

Three particularly noteworthy classes of metric spaces from recent geometric group theory have been CAT(0) spaces, median metric spaces, and injective metric spaces. Respectively, these can be thought of as being modelled on $\ell^2$, $\ell^1$, and $\ell^\infty$ geometry. As one might expect, the strongest consequences are to be had from the $\ell^1$ model, but correspondingly it is the most restrictive. Conversely, the $\ell^\infty$ model is the most general, but the properties one can obtain are more limited (even though the related Helly graphs enjoy some stronger properties than CAT(0) spaces, leading for instance to biautomaticity~\cite{chalopinchepoigenevoishiraiosajda:helly} and controlled torsion subgroups~\cite{haettelosajda:locally} for groups acting on them). The $\ell^2$ model serves as a happy medium between these two extremes, but it suffers from a different problem: it is extremely difficult to determine whether a given space is CAT(0) without already possessing stronger information, such as the existence of a certain manifold structure or the possibility of using $\ell^1$ methods.

In this article, we consider a natural interpolation between these notions, namely Busemann-convex cell complexes whose cells are given an $\ell^p$ metric. Recall that a metric space $X$ is \emph{Busemann-convex} if for every $x,y\in X$ there is a unique geodesic $\sigma_{xy}:[0,1]\to X$ from $x$ to $y$, and moreover the following holds for all $x,y,x',y'\in X$, $t\in[0,1]$:
\[
\dist(\sigma_{xy}(t),\sigma_{x'y'}(t)) \,\le\, t\dist(y,y')+(1-t)\dist(x,x').
\]
We obtain a number of properties of such complexes, as described in Section~\ref{sec:cell}. 

By relaxing the parameter $p$ to allow finite values larger than 2, we hope to be able to study naturally occurring spaces that either fail to be CAT(0) or are not easily determined to be so. In fact, much of what we do can actually be phrased more generally in terms of spaces with certain bicombings, for which we provide a local criterion (see Theorem~\ref{mthm:local_convexity_condition}). The use of bicombings in nonpositively curved metric spaces has many interesting applications, as illustrated by works such as \cite{lang:injective,descombeslang:convex,descombeslang:flats,kleinerlang:higher,haettel:link,engelwulff:coronas,basso:extending,karlsson:metric}.

One important geometric property of cell complexes with the $\ell^2$ metric that fails for the $\ell^\infty$ metric is that of strong bolicity (see Definition~\ref{def:strong_bolicity}, Remark~\ref{rem:bolicity}). It is a generalisation to general metric spaces of notions from the theory of Banach spaces, and was introduced by Kasparov and Skandalis in their work on the Novikov conjecture in~\cite{kasparovskandalis:groupes}. It was then used by Lafforgue for the Baum--Connes conjecture in~\cite{lafforgue:theorie}, which led to a proof for hyperbolic groups by Mineyev and Yu in~\cite{mineyevyu:baumconnes}. We show that, under mild conditions, a cell complex with the $\ell^p$ metric is strongly bolic when $p$ is finite (see Theorem~\ref{mthm:criterion_piecewise_lp_bolicity}).

A motivating idea here comes from hyperbolic groups. In the $\ell^2$ setting it is a well-known open problem whether every hyperbolic group admits a proper cocompact action on a CAT(0) space. On the other hand, if we consider $p=\infty$ then a result of Lang \cite{lang:injective} tells us that every hyperbolic group has such an action on an injective space with a natural cell structure. The $\ell^p$ framework suggests a natural weakening of the CAT(0) question: is it the case that for each hyperbolic group $G$ there is a value of $p$ such that $G$ acts properly cocompactly on a Busemann-convex cell complex whose cells have an $\ell^p$ metric?  Positive evidence for this comes from Yu's result that every hyperbolic group admits a proper affine action on an $\ell^p$-space \cite{yu:hyperbolic} (see also \cite{alvarezlafforgue:actions}), which holds in spite of the fact that (finitely generated) groups with Property (T) cannot have unbounded orbits on $\ell^2$ \cite{delorme:cohomologie,guichardet:surcohomologie:2}.

As a nice source of examples, we work out in detail the case of a CAT(0) cube complex endowed with the $\ell^p$ metric, following work of Ardila, Owen, and Sullivant for the CAT(0) metric (see~\cite{ardilaowensullivant:geodesics}). We prove that, with the $\ell^p$ metric, CAT(0) cube complexes are Busemann-convex and strongly bolic (Theorem~\ref{mthm:busemann_cube_complex}). We also give an explicit local characterisation of geodesics (Theorem~\ref{mthm:description_local_geodesics}) and derive a local distance formula (Theorem~\ref{mthm:CCC_distance_formula}).

\subsection{Cell complexes} \label{sec:cell}

Consider a finite-dimensional normed cell complex, i.e.\ a cell complex whose cells are convex polyhedra in normed vector spaces, glued by isometries of faces. We establish a simple criterion ensuring that a given continuous bicombing is convex. Roughly, a \emph{bicombing} is a choice of path between each pair of points; see Definition~\ref{def:bicombing}. Note that this result also applies to norms that are not uniquely geodesic.

\bmthm[Theorem~\ref{thm:criterion_busemann_convex}] \label{mthm:local_convexity_condition}
Let $X$ be a piecewise normed cell complex with finitely many shapes. Assume that the following conditions hold.
  \begin{enumerate}
  \item $X$ is simply connected.
  \item $X$ locally admits a consistent geodesic bicombing.
  \item For any two intersecting maximal cells $A,B$ of $X$, the union $A \cup B$, with the induced length metric, is Busemann-convex.
  \end{enumerate}
Then $X$ is Busemann-convex (in particular, it is uniquely geodesic).
\emthm

See Theorem~\ref{thm:gluing_busemann_convex} for a stronger statement,
with more precise assumptions. This result can be used to provide new examples of spaces with unique convex geodesic bicombings, called CUB spaces in~\cite{haettel:link}.

\mk

Now consider a finite-dimensional normed complex. We establish a criterion ensuring that the space is strongly bolic, see Theorem~\ref{thm:convex_bicombing_implies_bolicity} for a more precise version.

\bmthm[Theorem~\ref{thm:criterion_piecewise_lp_bolicity}] \label{mthm:criterion_piecewise_lp_bolicity}
Let $p \in [2,\infty)$, and let $X$ be a piecewise $\ell^p$ cell complex with finitely many shapes satisfying the following:
\bit
\item $X$ is simply connected.
\item $X$ locally admits a consistent geodesic bicombing.
\item For any two intersecting maximal cells $A,B$ of $X$, the union $A \cup B$, with the induced length metric, is Busemann-convex.
\eit
Then $X$ is Busemann-convex and strongly bolic.
\emthm

Our strategy for proving Theorem~\ref{mthm:criterion_piecewise_lp_bolicity} involves proving that such complexes are \emph{uniformly convex} and \emph{uniformly smooth}. These concepts play an important role in analysis and Banach-space theory, having been introduced in \cite{clarkson:uniformly}. They were shown to be dual in \cite{day:uniform}. They come with associated constants, and there has been a great deal of work around optimising these---see \cite{ballcarlenlieb:sharp} for more discussion. Both conditions were abstracted in an $\ell^2$ form to the nonlinear metric setting by Ohta \cite{ohta:uniform} (following \cite{bucherkarlsson:ondefinition}). The conditions we consider here are $\ell^p$ modifications of Ohta's definitions; see Definitions~\ref{def:uniform_convex} and~\ref{def:uniform_smooth}.

Key motivation for strong bolicity comes from Lafforgue's work on the Baum--Connes conjecture \cite{lafforgue:theorie}. To follow this strategy, one needs to find a proper action on a strongly bolic space. We observe that Bridson's splitting result for actions on CAT(0) spaces extends to strongly bolic metric spaces, see Theorem~\ref{thm:centralisers_split}. As a consequence, this strategy cannot be applied to mapping class groups of surfaces.

\bmthm[Corollaries~\ref{cor:mcg_not_lp} and~\ref{cor:mcg_not_bolic}] \label{mthm:mcg_not_bolic}
For $g \geq 3$, the mapping class group $\Mod(S_g)$ has no action by isometries on a strongly bolic metric space inducing a quasi-isometric embedding.  In particular, there is no proper, cobounded action of $\Mod(S_g)$ on a strongly bolic metric space.
\emthm

More precisely, we show that, for every action of the mapping class group on a strongly B1 space (see Definition~\ref{def:strong_bolicity}), the orbit of each Dehn twist is distorted.

\subsection{CAT(0) cube complexes}

To complement the results described above, we consider the concrete case of CAT(0) cube complexes. We prove that any CAT(0) cube complex, when endowed with the $\ell^p$ metric, is uniquely geodesic, and even Busemann-convex.

\bmthm[Theorem~\ref{thm:busemann_cube_complex}] \label{mthm:busemann_cube_complex}
Let $X$ denote any CAT(0) cube complex, endowed with the standard piecewise $\ell^p$-metric, for some $p \in (1,\infty)$. Then $X$ is Busemann-convex and uniformly convex. If $p \geq 2$, then $X$ is also uniformly smooth and strongly bolic.
\emthm

We use this result to deduce that for the limiting cases $p=1$ and $p=\infty$, even though the metric is not uniquely geodesic, there still exists a unique convex bicombing.

\bmthm[Theorem~\ref{thm:cube_complex_unique_bicombing_allp}] \label{mthm:cube_complex_unique_bicombing_allp}
Let $X$ denote any CAT(0) cube complex, endowed with the standard piecewise $\ell^p$-metric, for some $p \in [1,\infty]$. Then $X$ is CUB, i.e. it admits a unique convex geodesic bicombing $\sigma^p$, which varies continuously in $p$.
\emthm

Note that, to our knowledge, the existence a unique convex bicombing is also new for the piecewise $\ell^1$ metric on a CAT(0) cube complex. It is perhaps surprising that this is established using values of $p$ greater than 1.

In order to prove this result, we describe very precisely the behaviour of local geodesics via two simple conditions. This generalises results of Ardila--Owen--Sullivant for the CAT(0) metric \cite{ardilaowensullivant:geodesics}, about which there has been a significant amount of interest in computational geometry.

\bmthm[Theorem~\ref{thm:description_local_geodesics}] \label{mthm:description_local_geodesics}
Let $X$ be a CAT(0) cube complex, endowed with the piecewise $\ell^p$-metric for some $p\in(1,\infty)$. A piecewise affine path is a local geodesic in $X$ if and only if it satisfies the \emph{zero-tension} condition and the \emph{no-shortcut} condition.
\emthm

The zero-tension and no-shortcut conditions are discussed in Sections~\ref{subsec:zero_tension} and~\ref{subsec:no_shortcut}, respectively. Heuristically, the zero-tension condition says that the path does not change velocity when it changes cube, and the no-shortcut condition says that it doesn't go the long way round a vertex.

We are then able to describe the local structure of the cube complex, and give a simple formula for the local distances.

\bmthm[Distance formula, Proposition~\ref{prop:unique_decomposition} and Lemma~\ref{lem:distance_formula}] \label{mthm:CCC_distance_formula}
Let $X$ be a CAT(0) cube complex with the $\ell^p$-metric, for some $p\in(1,\infty)$. Let $C$ and $D$ be cubes in $X$ whose intersection is a vertex $v$. Let $x \in C$ and $y \in D$ be interior points. There exist unique maximal decompositions $C=\prod_{j=1}^kA_j$ and $D=\prod_{j=1}^kB_j$ such that all cubes of the form $B_1 \times \dots \times B_j \times A_{j+1} \times \dots \times A_k$ belong to $X$ and  
\begin{align*} 
\f{\|x-v\|_{A_1}}{\|y-v\|_{B_1}} \,<\, \f{\|x-v\|_{A_2}}{\|y-v\|_{B_2}} 
    \,<\, \dots \,<\, \f{\|x-v\|_{A_k}}{\|y-v\|_{B_k}}, 
\end{align*}
where, for instance, $\|x-v\|_{A_1}$ denotes the $\ell^p$ distance from $v$ to the orthogonal projection of $x$ to $A_1$.
In addition, the $\ell^p$-distance from $x$ to $y$ is given by
\[
\left\|\big(\|x-v\|_{A_1},\dots,\|x-v\|_{A_k}\big)
    \,+\, \big(\|y-v\|_{B_1},\dots,\|y-v\|_{B_k}\big)\right\|.
\]
\emthm

Though $C$ and $D$ meet at a point, there may be cubes ``in the corner between them'', as in Figure~\ref{fig:distance_formula}. We can think of these as sitting inside a $2\times2$ product coming from $C$ and $D$. The decomposition in Theorem~\ref{mthm:CCC_distance_formula} then represents the biggest such product in which $\{x,y,v\}$ isometrically embeds, and the distance formula then simply measures the distance in that product.

\subsection{Organisation of the article}

In Section~\ref{sec:bicombing}, we review definitions of bicombings, and we prove Theorem~\ref{mthm:local_convexity_condition} on Busemann-convexity of piecewise normed cell complexes.

In Section~\ref{sec:examples}, we present several basic examples of cell complexes with $\ell^p$ metrics, exhibiting various behaviours regarding Busemann-convexity when the value of $p$ varies.

Section~\ref{sec:bolicity} concerns strong bolicity, uniform smoothness, and uniform convexity. We prove Theorem~\ref{mthm:criterion_piecewise_lp_bolicity}, showing how strong bolicity can be established using a convex bicombing. We also prove Theorem~\ref{mthm:mcg_not_bolic}.

In Section~\ref{sec:lp_cube_complex}, we consider CAT(0) cube complexes with the $\ell^p$ metric, proving Theorems~\ref{mthm:busemann_cube_complex}, \ref{mthm:cube_complex_unique_bicombing_allp}, \ref{mthm:description_local_geodesics}, and~\ref{mthm:CCC_distance_formula}

\subsection{Acknowledgments}

The authors warmly thank Indira Chatterji, Urs Lang, and Constantin Vernicos for interesting discussions. We also thank the referee for useful comments that benefited the writing of the paper.

\section{Busemann-convexity of normed polyhedral complexes} \label{sec:bicombing}

We are interested in describing a simple local criterion ensuring that a given continuous bicombing on a normed polyhedral complex is convex. A related result is~\cite[Theorem~1.2]{buragoivanov:polyhedral}, which is concerned with locally uniquely geodesic spaces, and remarks that the question of Busemann-convexity is subtle.

\begin{defi}[Bicombing] \label{def:bicombing}
A \emph{bicombing} on a metric space $X$ is a continuous map $\sigma:X \times X \times [0,1] \ra X$, such that for each $x,y \in X$, one has $\sigma_{xy}(0)=x$ and $\sigma_{xy}(1)=y$. It is called:
\bit
\item \emph{geodesic} if for each $x,y \in X$, the map $t \mapsto \sigma_{xy}(t)$ is a constant speed (reparametrised) geodesic from $x$ to $y$;
\item \emph{consistent} if subpaths of bicombing paths are bicombing paths: for each $x,y \in X$,
    $s \leq t$, and $u \in [0,1]$, we have
    $\sigma_{\sigma_{xy}(s)\sigma_{xy}(t)}(u) = \sigma_{xy}\bigl((1-u)s+ut\bigr)$;
\item \emph{convex} if for each $x,y,x',y' \in X$, the function $t \mapsto d(\sigma_{xy}(t),\sigma_{x'y'}(t))$ is convex.
\eit
A \emph{local consistent bicombing} on $X$ is a collection of consistent bicombings $\sigma_U:U \times U \times [0,1] \ra X$, where $U$ varies over some open cover ${\mathcal U}$ of $X$, such that for every intersecting $U,V \in {\mathcal U}$ we have $\sigma_U|_{(U \cap V)^2}=\sigma_V|_{(U \cap V)^2}$.
\end{defi}
Thus, a metric space is Busemann-convex if it is uniquely geodesic and the unique geodesic bicombing is convex.

It turns out that we can check convexity of a consistent bicombing just by looking at midpoints.

\blem \label{lem:busemann_sufficient}
Let $X$ denote a metric space with a consistent bicombing $\sigma$. If
\[
d(\sigma_{xy}(\f12),\sigma_{xy'}(\f12)) \leq \f{d(y,y')}{2}
\]
holds for all $x,y,y'\in X$, then $\sigma$ is a convex bicombing.
\elem

\begin{proof}
Given $x,x',y,y'\in X$, we know that $d(\sigma_{xy}(\f12),\sigma_{x'y'}(\f12))\le\f12d(y,y')+\f12d(x,x')$. By iterating the assumption, $d(\sigma_{xy}(\f14),\sigma_{x'y'}(\f14))\le\f34d(x,x')+\f14d(y,y')$, and similarly at time $\f34$. Repeating, we get that for every dyadic $\f a{2^b}\in[0,1]$, we have $d(\sigma_{xy}(\f a{2^b}),\sigma_{x'y'}(\f a{2^b})) \le (1-\f a{2^b})d(x,x')+\f a{2^b}d(y,y')$. The dyadics are dense in $[0,1]$ and the function $t\mapsto d(\sigma_{xy}(t),\sigma_{x'y'}(t))$ is continuous, so it follows that $d(\sigma_{xy}(t),\sigma_{x'y'}(t))\le(1-t)d(x,x')+td(y,y')$ for all $t\in[0,1]$. This, along with consistency, implies that the bicombing $\sigma$ is convex.
\end{proof}

The convexity property in Busemann-convexity needs only to be checked locally.

\bthm[\cite{alexanderbishop:hadamard},\cite{miesch:cartan}] \label{thm:cartan_hadamard}
Let $X$ be a complete, simply connected, geodesic metric space. If every point of $X$ has a Busemann-convex neighbourhood, then $X$ is Busemann-convex. More generally, if $X$ has a local consistent geodesic bicombing $\sigma$ that is convex, then $X$ has a unique convex, consistent, geodesic bicombing that restricts to $\sigma$.
\ethm

We will now define cone complexes, which will be useful to describe the geometry of the neighbourhood of a point in a cell complex.

\begin{defi}[Cone polyhedron]
A \emph{cone polyhedron} $P$ is a finite intersection
$\bigcap_{i=1}^k H_i$ of linear half-spaces of some
finite-dimensional normed space $\bigl(\R^n, |\cdot|\bigr)$ such that
$0 \in \partial H_i$ for all $i$.  A \emph{face} or \emph{cell} of $P$
is $P \cap \bigcap_{i \in I} \partial H_i$ for some
$I \subseteq \{1,2, \ldots, k\}$.  The \emph{spine} of $P$ is the face
$\bigcap_{i=1}^k \partial H_i$, it is also the largest linear
subspace of $P$.  The spine of any face of $P$
coincides with the spine of $P$.
\end{defi}

\begin{defi}[Cone complex]
Let $V$ be a normed vector space and let $\{P_i\}$ be a collection of cone polyhedra with $V<\spine P_i$. The length metric space obtained by gluing the $P_i$ along identifications of $V$ is called a \emph{cone complex}. Its spine is $V$.
\end{defi}

Fix $x_0 \in X$, let $C$ be the minimal closed cell containing $x_0$
and let $X_C$ be the closed star of $C$ (the union of all closed cells containing $C$).  Given $x \in X_C$ there is a
unique affine map $\phi_x$ from $[0,1]$ to the minimal closed cell of
$X$ that contains both $x_0$ and $x$ such that $\phi_x(0) = x_0$ and
$\phi_x(1) = x$. Let $rx$ denote the image $\phi_x(r)$.  For
$n \in \N$, let $nX_C$ be $X_C$ with the metric scaled up by $n$.
Then for $n < m$, there is an isometric embedding $nX_C \to mX_C$
given by $x \mapsto \frac{n}{m}x$.  Viewing this isometric embedding
as an inclusion, we define the \emph{tangent cone} $T_{x_0} X$ of $X$
at $x_0$ as the ascending union $\bigcup_n nX_C$.

We say that a cone complex $X$ is \emph{locally conical} if, for every point $x \in X$, the open star of the minimal closed cell containing $x$ is a neighbourhood of $x$ in $X$. (This is a different sense of the word ``conical'' compared to that of a conical bicombing.) For instance, if $X$ has finitely many shapes, then $X$ is locally conical (recall that a cell complex is said to have finitely many shapes if it has finitely many isometry types of cells). The following is immediate.

\blem \label{lem:cone_complex_tangent_cone}
Let $X$ denote a locally conical cone complex. Given $x \in X$, let $C_x$ be the minimal closed cell containing $x$. The metric tangent cone $T_x X$ of $X$ at $x$ is a locally conical cone complex such that any small enough ball centred at $x$ in $X$ is isometric to a ball centred at $x$ in $T_x X$. Moreover, the spine of $T_x X$ has the same dimension as $C_x$.
\elem

If $X$ is a locally conical cone complex and $\sigma$ is a bicombing on $X$, we say that $\sigma$ is \emph{locally dilation-invariant} if for any $c \in X$, there exists $\eps>0$ such that, for any $x,y \in B(c,\eps)$, and for any $\lambda \in [0,1]$, we have $\sigma(c+\lambda x,c+\lambda y)=c+\lambda \sigma(x,y)$ with respect to the piecewise affine structure of $X$. More precisely, for any $c \in X$, we know by local conicality and by Lemma~\ref{lem:cone_complex_tangent_cone} that any small enough ball centred at $c$ in $X$ is isometric to a ball centred at $c$ in $T_cX$. Moreover, $T_cX$ is a cone complex and $c \in \spine(T_cX)$. Hence homotheties centred at $c$ are well-defined in $T_cX$, and this explains the affine notations $c+\lambda x$, $c+\lambda y$ and $c+\lambda \sigma(x,y)$.

\begin{thm} \label{thm:gluing_busemann_convex}
Let $X$ be a locally conical, finite-dimensional cone complex, and suppose that $\sigma$ is a consistent (not \emph{a priori} geodesic) bicombing that restricts to the constant-speed affine bicombing on each cell. Assume that the following conditions hold.
\begin{enumerate}
\item   $\sigma$ is locally dilation-invariant.
\item   For any two cells $C,C'$ of $X$, and for any $x \in C$ and $y,z \in C'$ such that $\sigma_{x,y}$ and $\sigma_{x,z}$ are contained in $C \cup C'$, the map $t \mapsto d(\sigma_{x,y}(t),\sigma_{x,z}(t))$ is convex.
\end{enumerate}
Then $\sigma$ is a convex geodesic bicombing.
\end{thm}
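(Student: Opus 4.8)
The plan is to reduce the statement, through Lemma~\ref{lem:busemann_sufficient} and the length-metric structure of $X$, to a local comparison of two combing rays issuing from a common point, and then to prove that comparison by an induction on the number of cells the rays cross, feeding hypothesis~(2) in at each crossing.

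First, it is enough to show that $\sigma$ is convex. Indeed, since $\sigma$ restricts to the affine bicombing on each cell, $\sigma_{xx}$ is the constant path at $x$; applying the convexity inequality with $x'=y'=x$, and then with $x'=y'=y$, shows that $t\mapsto d(x,\sigma_{xy}(t))$ and $t\mapsto d(\sigma_{xy}(t),y)$ are convex on $[0,1]$, and since these vanish at one endpoint and equal $d(x,y)$ at the other, their sum is at most $d(x,y)$; the triangle inequality then forces $d(x,\sigma_{xy}(t))=t\,d(x,y)$ for all $t$, and combined with consistency this gives $d(\sigma_{xy}(s),\sigma_{xy}(t))=|t-s|\,d(x,y)$, so $\sigma_{xy}$ is a constant-speed geodesic. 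By Lemma~\ref{lem:busemann_sufficient}, convexity of $\sigma$ in turn follows once we establish $d(\sigma_{xy}(\f12),\sigma_{xz}(\f12))\le\f12 d(y,z)$ for all $x,y,z\in X$.

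Since $X$ carries a length metric, the midpoint map $m_x\colon w\mapsto\sigma_{xw}(\f12)$ is globally $\f12$-Lipschitz as soon as it is locally so, hence it is enough to fix $x$ and a point $y$ and prove $d(\sigma_{xw}(\f12),\sigma_{xw'}(\f12))\le\f12 d(w,w')$ for $w,w'$ ranging over a small neighbourhood of $y$. By continuity of $\sigma$, as $w,w'\to y$ the rays $\sigma_{xw},\sigma_{xw'}$ converge uniformly to $\gamma:=\sigma_{xy}$; using consistency and local conicality, $\gamma$ is piecewise affine with finitely many legs, each lying in a single cell, and it crosses a finite sequence of cells $C_1,\dots,C_N$ with $x\in C_1$ and $y\in C_N$, and a genericity argument on the perturbation (reducing to a dense set of $w,w'$) lets us assume $\sigma_{xw}$ and $\sigma_{xw'}$ cross the same sequence. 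I would then prove, by induction on $N$, the full convexity inequality $d(\sigma_{ab}(t),\sigma_{a'b'}(t))\le(1-t)d(a,a')+t\,d(b,b')$ for any two combing rays each crossing at most $N$ cells, with $a$ close to $a'$ and $b$ close to $b'$; for a fixed bound on the number of cells this is equivalent to its shared-basepoint midpoint form by the dyadic argument in the proof of Lemma~\ref{lem:busemann_sufficient}, which only manipulates sub-rays. The case $N=1$ is immediate, since the rays are affine segments in one cell, and the case $N=2$ is precisely hypothesis~(2). For the inductive step, let $p$ be the point where $\gamma$ leaves $C_1$; by consistency $\gamma=\sigma_{xp}\ast\sigma_{py}$ with $\sigma_{xp}$ affine in $C_1$ and $\sigma_{py}$ crossing only $C_2,\dots,C_N$, and likewise $\sigma_{xw}=\sigma_{xp'}\ast\sigma_{p'w}$ and $\sigma_{xw'}=\sigma_{xp''}\ast\sigma_{p''w'}$ with $p',p''$ near $p$; the inductive hypothesis handles the tails $\sigma_{p'w}$ versus $\sigma_{p''w'}$, the case $N=1$ handles the heads, and hypothesis~(2) applied to the pair $C_1\cup C_2$ --- together with local dilation-invariance, which identifies a neighbourhood of $p$ with one in the tangent cone $T_pX$, thereby pinning down the behaviour of $\sigma$ across the crossing --- controls the one-sided slopes, so that the estimates assemble into a single convex function.

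The chaining in the inductive step is the heart of the matter: a concatenation of convex functions is not convex in general, so one must track the slopes at the cell-crossings carefully, and it is exactly there that hypothesis~(2) and local dilation-invariance at the crossing points are used; the secondary point is to control, for $w,w'$ near $y$, the sequence of cells through which the perturbed rays actually pass, which the genericity reduction takes care of. By comparison the remaining passages --- the geodesic-from-convex argument, the two reductions using Lemma~\ref{lem:busemann_sufficient} and the length metric, and the verifications that the sub-configurations and tangent cones appearing along the way inherit all three hypotheses --- are routine.
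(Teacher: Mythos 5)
Your proposal takes a genuinely different route from the paper, and the route has a gap that you yourself flag as ``the heart of the matter'' but do not close. The paper's induction is on the \emph{codimension of the spine} of the cone complex, not on the number of cells crossed by a single ray. That choice of induction parameter is what makes the argument go: at any point not lying on the spine, the tangent cone has strictly smaller spine-codimension, so the inductive hypothesis immediately gives convexity of the bicombing on that tangent cone, hence local convexity of the comparison function. The paper then decomposes in the parameter $s$ along $\sigma_{yz}$: for $s$ outside $[s_{\min},s_{\max}]$ the ray $\sigma_{x,\bar s}$ avoids the spine and the tangent-cone induction plus a compactness argument applies; for $s\in[s_{\min},s_{\max}]$ the ray $\sigma_{x,\bar s_{\min}}$ passes through the spine and therefore consists of exactly two affine segments in two cells, so hypothesis~(2) applies \emph{directly} to $\sigma_{x,\bar s_{\min}}$ and $\sigma_{x,\bar s_{\max}}$. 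Crucially, all the rays being compared share the basepoint $x$ and are compared at a common time $t$, so there is no reparametrization mismatch to reconcile.

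Your $t$-decomposition at the cell boundary $p$ creates exactly the problem you describe: the perturbed rays $\sigma_{xw}$, $\sigma_{xw'}$ exit $C_1$ at different times $t_0'$, $t_0''$, so $t\mapsto d(\sigma_{xw}(t),\sigma_{xw'}(t))$ must be analysed on three ranges. On $[t_0'',1]$ the tails are reparametrizations of $\sigma_{p'w}$, $\sigma_{p''w'}$ with \emph{different} rescaling factors, so the inductive estimate does not transfer directly; on $[t_0',t_0'']$ one point is in $C_1$ and the other has already passed into the next cell; and the slope-matching across the three joins is precisely what you would have to prove, not assert. Moreover, hypothesis~(2) requires the \emph{entire} rays $\sigma_{x,y}$, $\sigma_{x,z}$ to lie in $C\cup C'$; the rays $\sigma_{xw}$, $\sigma_{xw'}$ continue into $C_3,\dots,C_N$, so you cannot invoke hypothesis~(2) for the pair $C_1\cup C_2$ without a further reduction (via consistency, restricting to a sub-ray) that you never carry out, and which would reintroduce the parametrization mismatch. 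Finally, both the finiteness of the cell sequence $C_1,\dots,C_N$ (this is Lemma~\ref{lem:finite_number_cells}, itself proved by spine-codimension induction) and the claim that generic nearby $w,w'$ give the \emph{same} cell sequence require proof that you gesture at but do not supply. The correct reductions via Lemma~\ref{lem:busemann_sufficient} and the local-to-global Lipschitz argument are fine, and your observation that convexity implies geodesicity matches the paper's closing paragraph, but the central chaining step is missing.
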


\begin{proof}
  We proceed by induction on the codimension of the spine of $X$.
  
  If the spine of $X$ has codimension $0$, then $X$ is $\R^n$ endowed
  with some norm.  So by assumption, $\sigma$ is the constant speed
  affine bicombing, which is convex.
  
  Assume now that the spine of $X$ has codimension at least $1$. Since
  $\sigma$ is consistent, according to Lemma~\ref{lem:busemann_sufficient} it suffices to show that
  \begin{equation}
    \label{eqn:convineq}
    d\bigl(\sigma_{x,y}(\tfrac{1}{2}),\sigma_{x,z}(\tfrac{1}{2})\bigr) \leq
    \tfrac{1}{2}d(y,z)
  \end{equation}
  for any $x,y,z \in X$ in order to establish the convexity of
  $\sigma$.
  
  For $s \in [0,1]$, let $\bar s = \sigma_{y,z}(s)$.  Consider the set
  $S_{\spine}$ of all $s \in [0,1]$ for which $\sigma_{x,\bar s}$
  intersects $\spine(X)$.  Since $\{\sigma_{x,y}\}_{x,y}$ is
  continuous, if $S_{\spine} \neq \varnothing$ then $S_{\spine}$ has a
  minimum and a maximum.  In this case, set
  $s_{\min} = \min S_{\spine}$ and $s_{\max} = \max S_{\spine}$;
  otherwise set $s_{\min} = 1$ and $s_{\max} = 0$, so that $s_{\min}>s_{\max}$. See Figure~\ref{fig:spine}.

  We will prove the inequalities
  \begin{align}
  \label{eqn:yminconv} d\bigl(\sigma_{x,y}(\tfrac{1}{2}),\sigma_{x,\bar s_{\min}}(\tfrac{1}{2})\bigr) &\leq \tfrac{1}{2}d(y,\bar s_{\min}) \\
  \label{eqn:zmaxconv} d\bigl(\sigma_{x,\bar s_{\max}}(\tfrac{1}{2}),\sigma_{x,z}(\tfrac{1}{2})\bigr) &\leq
  \tfrac{1}{2}d(\bar s_{\max},z)
  \end{align}
  and, in the case where $s_{\min} \leq s_{\max}$, the inequality
  \begin{equation}
    \label{eqn:minmaxconv}
    d\bigl(\sigma_{x,\bar s_{\min}}(\tfrac{1}{2}),\sigma_{x,\bar s_{\max}}(\tfrac{1}{2})\bigr) \leq \tfrac{1}{2}d(\bar s_{\min},\bar s_{\max}).
  \end{equation}
  Together, these inequalities will imply \eqref{eqn:convineq}.
  
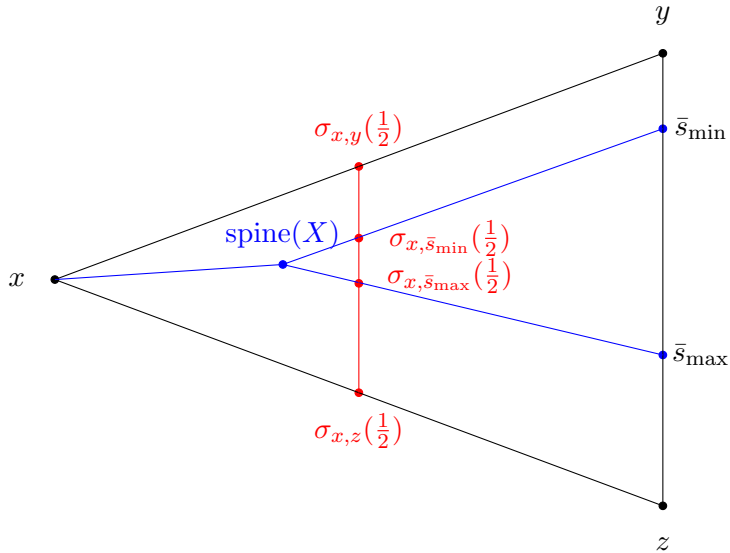
\begin{figure}[ht]
\centering
\begin{tikzpicture}
\def \p {0.05}
\def \op {0.3}
\def \gris {blue}
\draw[fill] (0,0) circle (\p) node(x) {};
\draw[fill] (8,3) circle (\p) node(y) {};
\draw[fill] (8,-3) circle (\p) node(z) {};
\draw[fill,blue] (3,0.2) circle (\p) node(s) {};
\draw[fill,blue] (8,2) circle (\p) node(smin) {};
\draw[fill,blue] (8,-1) circle (\p) node(smax) {};
\draw[fill,red] (4,1.5) circle (\p) node(my) {};
\draw[fill,red] (4,0.55) circle (\p) node(msmin) {};
\draw[fill,red] (4,-0.05) circle (\p) node(msmax) {};
\draw[fill,red] (4,-1.5) circle (\p) node(mz) {};

\draw[black] (z.center) -- (x.center) -- (y.center) -- (z.center);
\draw[blue] (x.center) -- (s.center) -- (smin.center);
\draw[blue] (s.center) -- (smax.center);
\draw[red] (my.center) -- (mz.center);

\node at ([xshift=-0.5cm]x) {$x$};
\node at ([yshift=0.5cm]y) {$y$};
\node at ([yshift=-0.5cm]z) {$z$};
\node at ([yshift=0.5cm]my) {${\color{red} \sigma_{x,y}(\tfrac{1}{2})}$};
\node at ([yshift=-0.5cm]mz) {${\color{red} \sigma_{x,z}(\tfrac{1}{2})}$};
\node at ([xshift=1.2cm]msmin) {${\color{red} \sigma_{x,\bar s_{\min}}(\tfrac{1}{2})}$};
\node at ([xshift=1.2cm,yshift=0.1cm]msmax) {${\color{red} \sigma_{x,\bar s_{\max}}(\tfrac{1}{2})}$};
\node at ([yshift=0.4cm]s) {${\color{blue} \spine(X)}$};
\node at ([xshift=0.5cm]smin) {$\bar s_{\min}$};
\node at ([xshift=0.5cm]smax) {$\bar s_{\max}$};

\end{tikzpicture}
\caption{Proof of Theorem~\ref{thm:gluing_busemann_convex}}
\label{fig:spine}
\end{figure}
  
  We begin with the proof of \eqref{eqn:yminconv}. (The proof of \eqref{eqn:zmaxconv} is identical.) We may assume that
  $s_{\min} > 0$ since otherwise we have $\bar s_{\min} = y$ so that
  $d\bigl(\sigma_{x,y}(\tfrac{1}{2}),\sigma_{x,\bar
    s_{\min}}(\tfrac{1}{2})\bigr) = 0$.
  
  For $s\in [0,s_{\min})$, $t\in[0,1]$, let $C_{s,t}$ be the
  minimal closed cell containing $\sigma_{x,\bar s}(t)$. Since $X$ is
  locally conical, according to
  Lemma~\ref{lem:cone_complex_tangent_cone}, we can consider the
  tangent cone complex $T_{s,t}=T_{\sigma_{x,\bar s}(t)} X$. We will
  see that it satisfies all assumptions of the theorem.
  
  Since the bicombing $\sigma$ is locally dilation-invariant, it induces a consistent, geodesic bicombing $\sigma^{s,t}$ on $T_{s,t}$ that is locally dilation-invariant. Furthermore, $\sigma^{st}$ restricts to the constant speed affine bicombing on each cell because $\sigma$ does. Assume that there are cells $C,C'$ of $T_{st}$ and $x \in C$, $y,z \in C'$ such that $\sigma^{st}_{x,y},\sigma^{st}_{x,z} \subset C \cup C'$. Since $T_{st}$ is the tangent cone of $X$ at $\sigma_{x,\bar s}(t)$, we see that the convexity property for $\sigma$ implies that the function $u \mapsto d(\sigma^{st}_{x,y}(u),\sigma^{st}_{x,z}(u))$ is convex.
  
  Moreover, the dimension of $T_{st}$ is at most the dimension of $X$, and the dimension of the spine of $T_{st}$ is greater than the dimension of the spine of $X$. Hence the codimension of the spine of $T_{st}$ is less than the codimension of the spine of $X$. By induction, we deduce that $\sigma^{st}$ is a convex bicombing.
  
  Since $T_{st}$ is obtained as a very simple scaling of $X$, we deduce that $\sigma$ restricts to a convex bicombing on the open star of $C_{st}$. By continuity of $\sigma$ there exists $\delta_{s,t} > 0$ such that the image of $[s-\delta_{s,t}, s+\delta_{s,t}] \times [t - \delta_{s,t}, t+\delta{s,t}]$ under $(s,t) \mapsto \sigma_{x,\bar s}(t)$ is contained in the open star of $C_{s,t}$. By compactness of $[0,1]$, we deduce that for each $s \in [0,s_{\min})$, there exists $\eps_s>0$ such that, for each $s' \in [0,s_{\min})$ with $|s-s'|<\eps_s$, the function $t \in [0,1] \mapsto d(\sigma_{x,\bar s}(t),\sigma_{x,\bar s'}(t))$ is convex. As $\sigma$ is continuous, it follows that $t \mapsto d(\sigma_{x,y}(t),\sigma_{x,\bar s_{\min}}(t))$ is convex, and in particular $d\bigl(\sigma_{x,y}(\tfrac{1}{2}),\sigma_{x,\bar s_{\min}}(\tfrac{1}{2})\bigr) \leq \tfrac{1}{2}d(y,\bar s_{\min})$.

If $S_{\spine}=\varnothing$ then we are done, so it remains to establish \eqref{eqn:minmaxconv} under the assumption that $s_{\min} < s_{\max}$. According to Lemma~\ref{lem:finite_number_cells} below, the combing line
  $\sigma_{\bar s_{\min},\bar s_{\max}}$ is contained in a finite
  union of cells. Thus, by consistency we may (after subdividing) assume that
  $\bar s_{\min}$ and $\bar s_{\max}$ are contained in a common cell
  $C'$. Let $C$ denote the minimal closed cell containing $x$. Both combing lines $\sigma_{x,\bar s_{\min}}$ and
  $\sigma_{x,\bar s_{\max}}$ are contained in $C \cup C'$. By
  assumption, we deduce that
  $d(\sigma_{x,\bar s_{\min}}(\f{1}{2}),\sigma_{x,\bar
    s_{\max}}(\f{1}{2})) \leq \f{1}{2}d(\bar s_{\min},\bar s_{\max})$. This shows that $\sigma$ is convex.
    
Now observe that a convex bicombing is necessarily geodesic. Indeed, for any $x,y \in X$, since $\sigma_{x,x}=\{x\}$ and $\sigma_{y,y}=\{y\}$, we know that the functions $t\mapsto d(\sigma_{x,y}(t),x)$ and $t \mapsto d(\sigma_{x,y}(t),y)$ are convex. This means that the function $f:t \mapsto d(\sigma(x,y,t),x)+d(\sigma(x,y,t),y)$ is convex, bounded below by $d(x,y)$, and equal to $d(x,y)$ at $t=0$ and $t=1$. Therefore $f=d(x,y)$. That implies that $\sigma_{x,y}$ is a constant speed geodesic from $x$ to $y$.
\end{proof}

\blem \label{lem:finite_number_cells}
Consider $X$ as in Theorem~\ref{thm:gluing_busemann_convex}. For any points $y,y' \in X$, the combing line $\sigma_{yy'}$ is contained in a union of finitely many cells of $X$.
\elem

\bp
We proceed by induction on the codimension of $\spine X$. 
If $\spine X$ has codimension $0$, then $X$ is affine and the result is clear, so assume that $\spine X$ has codimension at least $1$. 

Assume first that $\sigma_{yy'}$ intersects the spine of $X$. Let $z \in \sigma_{yy'} \cap \spine(X)$. By the assumptions on $\sigma$, we know that $\sigma_{yy'}$ is the union of the two affine segments $\sigma_{yy'}=[y,z] \cup [z,y']$, which is contained in the union of two cells.

Assume now that $\sigma_{yy'}$ does not intersect the spine of $X$. Fix $t \in [0,1]$. Since $X$ is locally conical, there exists $r_t>0$ such that the ball $B_X(\sigma_{yy'}(t),r_t)$ is contained in the open star of the minimal cell $C_t$ containing $\sigma_{yy'}(t)$. By continuity of $\sigma$, there exists $\delta_t>0$ such that, for any $t \in [0,1]$ with $|t-t'| < \delta_t$, the point $\sigma_{yy'}(t')$ also lies in the open star of the minimal cell of $C_t$. Let $T_t$ denote the tangent cone to $X$ at $\sigma_{yy'}(t)$. Since the bicombing $\sigma_t$ on $T_t$ is the scaling of $\sigma$, and since the codimension of the spine of $T_t$ is less than the codimension of the spine of $X$, we deduce by induction that there the image of $[0,1] \cap (t-\delta_t,t+\delta_t)$ under $\sigma_{yy'}$ is contained in the union of finitely many cells of $X$. Since the segment $[0,1]$ is compact, we conclude that $\sigma_{yy'}$ is contained in the union of finitely many cells of $X$.
\ep

\brk \label{rk:gluing_busemann_convex}
In fact, the proof of Theorem~\ref{thm:gluing_busemann_convex} provides the following more precise statement.

Let $X$ be a locally conical, finite-dimensional cone complex, and let $U \subset V \subset X$ denote non-empty open subsets of $X$. Assume that the following conditions hold.
  \begin{enumerate}
  \item There exists a bicombing $\sigma : V \times V \times [0,1] \ra X$, with $\sigma(U \times U) \subset V$, that is partially consistent, i.e. for each $x,y \in V$, $s \leq t$, and $u \in [0,1]$ such that $\sigma_{xy}(s),\sigma_{xy}(t) \in V$, we have $\sigma_{\sigma_{xy}(s)\sigma_{xy}(t)}(u) = \sigma_{xy}\bigl((1-u)s+ut\bigr)$.
  \item $\sigma$ is locally dilation-invariant.
  \item For each cell $C$ of $X$, the restriction of $\sigma$ to $(C \cap V) \times (C \cap V)$ is the constant speed affine bicombing.
  \item For any two cells $C,C'$ of $X$, and for any $x \in C \cap U$ and $y,z \in C' \cap U$ such that $\sigma_{xy}$ and $\sigma_{xz}$ are contained in $C \cup C'$, the map $t \mapsto d(\sigma_{xy}(t),\sigma_{xz}(t))$ is convex.
  \end{enumerate}
  Then $\sigma$ is a convex geodesic bicombing on $V$.
\erk

\brk
Note that the assumption that the union of two cones $C \cup C'$ is Busemann-convex may not be removed from Theorem~\ref{thm:gluing_busemann_convex}. Already there is a counterexample in~\cite{piateksamulewicz:gluing} with a union of $3$ half-planes. Here is a simpler example with a gluing of two half-spaces.

Consider $H_1=\{(x,y) \in \R^2, x \leq 0\}$, with the $\ell^2$ metric, and $H_2=\{(x,y) \in \R^2, x \geq 0\}$, with the $\ell^3$ metric. Let $X$ be the cone complex obtained by gluing $H_1$ and $H_2$ along the line $\{x=0\}$, endowed with the induced length metric. It is not hard to see that $X$ is uniquely geodesic.

However, consider the points $x=(-1,0) \in H_1$, $y=(1,0) \in H_2$, and $z=(0,\sqrt{3})$. We have $d(x,y)=2=d(x,z)$ and $d(y,z)=(1+\sqrt{3}^3)^{\frac{1}{3}}$. The midpoint of $x$ and $y$ is $m(x,y)=(0,0)$, and the midpoint between $x$ and $z$ is $\left(\f{-1}{2},\f{\sqrt{3}}{2}\right)$. So the distance between midpoints is $d(m(x,y),m(x,z)) = 1 > \frac12(1+\sqrt{3}^3)^{\frac{1}{3}} = \frac12d(y,z)$. Hence $X$ is not Busemann-convex.
\erk

We now turn to the stronger property of Busemann-convexity. The following result shows how local convexity can be upgraded to local Busemann-convexity.

\bpro \label{pro:locally_convex_busemann}
Let $X$ be a locally conical piecewise normed cell complex that admits a local convex, consistent, geodesic bicombing. Suppose that for any two intersecting maximal cells $A$ and $B$ of $X$, the union $A \cup B$, with the induced length metric, is Busemann-convex. Then $X$ is locally Busemann-convex.
\epro

\bp
Consider an open subset $U \subset X$ with a convex, consistent, geodesic bicombing $\sigma$. Fix $x,y \in U$, and consider any constant speed geodesic $\gamma$ from $x$ to $y$. By assumption about intersections of maximal cells, we know that $\gamma$ locally coincides with $\sigma$. We deduce that the distance between $\gamma$ and $\sigma_{x,y}$ is a convex function. Hence $\gamma=\sigma_{x,y}$, so $U$ is uniquely geodesic. We deduce from convexity of $\sigma$ that $U$ is Busemann-convex.
\ep

We may now combine Theorem~\ref{thm:gluing_busemann_convex} and Proposition~\ref{pro:locally_convex_busemann} to state a simple, applicable result ensuring that a given space is Busemann-convex. 

\begin{thm} \label{thm:criterion_busemann_convex}
Let $X$ be a piecewise normed cell complex with finitely many shapes. If the following conditions hold, then $X$ is Busemann-convex.
\begin{enumerate}
\item $X$ is simply connected.
\item $X$ admits a local consistent geodesic bicombing.
\item For any two intersecting maximal cells $A,B$ of $X$, the union $A \cup B$, with the induced length metric, is Busemann-convex.
\end{enumerate}
\end{thm}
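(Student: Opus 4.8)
The plan is to deduce the statement by combining Theorem~\ref{thm:gluing_busemann_convex}, Proposition~\ref{pro:locally_convex_busemann} and Theorem~\ref{thm:cartan_hadamard}. Since $X$ has finitely many shapes it is locally conical, and a cell complex with finitely many shapes is a complete geodesic metric space; together with simple connectedness, Theorem~\ref{thm:cartan_hadamard} then reduces the problem to showing that $X$ is \emph{locally} Busemann-convex. By Proposition~\ref{pro:locally_convex_busemann}, whose only remaining hypothesis is exactly hypothesis~(3), it is therefore enough to upgrade the given local consistent geodesic bicombing $\sigma$ to a \emph{convex} one. This upgrading is the real content, and it is carried out one tangent cone at a time via Theorem~\ref{thm:gluing_busemann_convex}.

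First I would record two preliminary consequences of hypothesis~(3). Taking $B=A$ in~(3), every maximal cell of $X$, hence every cell, is Busemann-convex for its induced metric; but a full-dimensional convex polyhedron can be Busemann-convex only if the ambient norm is strictly convex, and geodesics in a strictly convex normed space are unique and affine. Hence $\sigma$ restricts to the constant-speed affine bicombing on each cell of $X$, and likewise on each cell of each tangent cone, since the cells of a tangent cone carry the same norms.

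Now fix $x\in X$. By Lemma~\ref{lem:cone_complex_tangent_cone} a small enough ball $V$ about $x$ is isometric to a ball about $x$ in the tangent cone $T_xX$, a locally conical finite-dimensional cone complex with $x$ in its spine, and under this identification $\sigma$ becomes a consistent geodesic bicombing on $V$ that is affine on cells. One then checks that this data satisfies the hypotheses of Theorem~\ref{thm:gluing_busemann_convex}, or rather of its local version Remark~\ref{rk:gluing_busemann_convex}, which is tailored to a bicombing defined only on a neighbourhood: the bicombing is affine on cells and consistent by construction; it should be locally dilation-invariant (the delicate point, see below) because the homotheties of $T_xX$ centred at points of its spine are similarities mapping each cell onto itself, with which an affine, consistent, geodesic bicombing must commute; and the two-cell convexity condition for cells $C,C'$ of $T_xX$ reduces to hypothesis~(3) by passing to intersecting maximal cells $A\supseteq C$, $B\supseteq C'$ of $X$, so that any combing bigon supported on $C\cup C'$ is a bigon of geodesics inside (a tangent cone of) the Busemann-convex space $A\cup B$ and hence has convex distance between its two sides. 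Theorem~\ref{thm:gluing_busemann_convex} then makes $\sigma$ convex on $V$; as $x$ was arbitrary this produces a local convex consistent geodesic bicombing on $X$, Proposition~\ref{pro:locally_convex_busemann} gives local Busemann-convexity, and Theorem~\ref{thm:cartan_hadamard} concludes.

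I expect the main obstacle to be the verification that the abstractly given local consistent geodesic bicombing is genuinely \emph{locally dilation-invariant} near each point. This is precisely the step where the strict convexity of \emph{all} cells, forced by hypothesis~(3), has to be used in an essential way, since it is what pins $\sigma$ down to be affine on cells and thus compatible with the conical structure of each tangent cone; dropping hypothesis~(3) is exactly what the counterexamples following Theorem~\ref{thm:gluing_busemann_convex} exploit. A secondary, bookkeeping obstacle is the passage from an arbitrary pair of cells of a tangent cone $T_xX$ to a pair of intersecting maximal cells of $X$, together with the identification of the union appearing there as a Busemann-convex subspace — using that tangent cones of Busemann-convex spaces are again Busemann-convex — so that hypothesis~(3) can legitimately be invoked.
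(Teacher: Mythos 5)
Your overall strategy is exactly the paper's: establish the hypotheses of Remark~\ref{rk:gluing_busemann_convex} (the local version of Theorem~\ref{thm:gluing_busemann_convex}) near each point, conclude that the given local bicombing is convex, then apply Proposition~\ref{pro:locally_convex_busemann} and finally Theorem~\ref{thm:cartan_hadamard}. Your preliminary observations are also correct and match the paper: hypothesis~(3) with $B=A$ forces each cell's norm to be strictly convex, hence each cell is uniquely geodesic with affine geodesics, hence any consistent geodesic bicombing restricts to the constant-speed affine bicombing on each cell. The reduction of the two-cell convexity condition of Theorem~\ref{thm:gluing_busemann_convex} to hypothesis~(3) via intersecting maximal cells is likewise as in the paper.

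The one place your argument has a real gap is in the verification of local dilation-invariance, which you rightly flag as the delicate step. Your proposed mechanism is that ``an affine, consistent, geodesic bicombing must commute with homotheties centred on the spine.'' This is not justified, and in fact is close to circular: a similarity maps each geodesic to \emph{some} geodesic, and if the space were already known to be uniquely geodesic then the bicombing would be forced to be equivariant, but unique geodesicity is exactly what the theorem is trying to establish. Consistency plus affinity on cells pins down the shape of $\sigma_{xy}$ inside each cell but does \emph{not} determine the break points where $\sigma_{xy}$ crosses cell boundaries, so there is no purely formal reason for $h_\lambda\sigma_{xy}$ to equal $\sigma_{h_\lambda x,\,h_\lambda y}$. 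The paper's actual argument is the following: for $\lambda$ sufficiently close to $1$ and every $t$, both $\sigma_{c+\lambda x,\,c+\lambda y}(t)$ and $c+\lambda\sigma_{xy}(t)$ have a common neighbourhood contained in a union $A\cup B$ of two intersecting maximal cells; since $c+\lambda\sigma_{xy}$ is a concatenation of affine (hence combing) segments, it locally coincides with a combing path, so by hypothesis~(3) the function $t\mapsto d\bigl(\sigma_{c+\lambda x,\,c+\lambda y}(t),\,c+\lambda\sigma_{xy}(t)\bigr)$ is \emph{locally convex}; being nonnegative, continuous, and vanishing at $t=0$ and $t=1$, it is identically zero. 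This is where hypothesis~(3) does genuine work beyond forcing strict convexity of the norms, and it is the step you would need to supply to make the proof complete.
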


\bp
We will see that these conditions imply the conditions of Theorem~\ref{thm:gluing_busemann_convex} locally. Let us fix a point $x_0 \in X$, and consider $r>0$ such that the open ball $V=B(x_0,3r)$ is contained in the open star of the minimal closed face containing $x_0$. Let $U=B(x_0,r)\subset V$. By assumption, there exists a continuous, consistent geodesic bicombing $\sigma:V \times V \ra X$ such that $\sigma(U \times U) \subset V$. We will apply Remark~\ref{rk:gluing_busemann_convex} to show that $\sigma$ is convex.

First note that, since $X$ has finitely many shapes, the neighbourhood $V$ of $x_0$ is isometric to a neighbourhood in a locally conical cone complex $Y$.

We will prove that $\sigma$ is locally dilation-invariant. Fix $c \in V$. Let $\eps \in (0,r)$ be such that $[x,c] \subset V$ for all $x \in B(c,\eps)$. Fix $x,y \in B(c,\eps)$. Let $\lambda_0 \in (0,1]$ be such that, for any $\lambda \in [\lambda_0,1]$ and any $t \in [0,1]$, there exist two maximal cells $A,B$ of $X$ such that $A \cup B$ contains a neighbourhood of $\sigma_{c+\lambda x,c+\lambda y}(t)$ and of $c+\lambda \sigma_{x,y}(t)$. This implies that the path $c+\lambda \sigma_{x,y}$ locally coincides with $\sigma$. In particular, we deduce that the function $t \mapsto d(\sigma_{c+\lambda x,c+\lambda y}(t),c+\lambda \sigma_{x,y}(t))$ is locally convex, hence it is identically $0$. This implies the desired local dilation invariance property.

For any Busemann-convex finite-dimensional normed space, the geodesic bicombing is given by constant speed affine segments. This implies that for any face $C$ of $X$,  the restriction of $\sigma$ to $(C \cap V) \times (C \cap V)$ is the constant speed affine bicombing. 

The last condition on the union of two maximal cells is simpler to state, but implies the similar condition of Theorem~\ref{thm:gluing_busemann_convex}. We can therefore apply Theorem~\ref{thm:gluing_busemann_convex} (as stated in Remark~\ref{rk:gluing_busemann_convex}), and deduce that $\sigma$ is convex.

Proposition~\ref{pro:locally_convex_busemann} now tells us that $X$ is locally Busemann-convex. Because it is simply connected, Theorem~\ref{thm:cartan_hadamard} shows that $X$ is globally Busemann-convex. 
\ep

\brk
Note that the existence of a local consistent geodesic bicombing follows, for instance, from the assumption that the complex $X$ is locally uniquely geodesic. However, proving local unique geodesicity is usually hard. In fact, since Busemann-convexity implies unique geodesicity, Theorem~\ref{thm:criterion_busemann_convex} can also be seen as a way to prove unique geodesicity starting from a local consistent geodesic bicombing.
\erk

\section{Examples and counterexamples} \label{sec:examples}

The motivating idea for considering $\ell^p$ metrics is that, although it can happen that a cell complex fails to be uniquely geodesic (and hence cannot be CAT(0)) when the cells are given the $\ell^2$ metric, in some cases increasing the value $p$ can alleviates the problem. In this section we describe some simple examples where this happens, and some counterexamples.

\mk

One of the main sources of examples of piecewise $\ell^\infty$ complexes with convex bicombings come from orthoscheme complexes of lattices, see~\cite{haettel:lattices} and \cite{haettel:link}. Let us recall a few basic definitions.

\mk

A poset $L$ is \emph{graded} if, for each $x<y$ in $L$, all maximal chains from $x$ to $y$ have the same length. A graded poset has rank at most $r \in \N$ if there exists a function $\rk:L \ra \{0,1,\dots,r\}$ such that, for each $x<y$, one has $\rk(x)<\rk(y)$.

A poset $L$ is \emph{bounded} if it has both a minimum and a maximum.

A poset $L$ is \emph{flag} if any three $a,b,c \in L$ which are pairwise upper bounded have an upper bound.

A poset $L$ is a \emph{lattice} if any two $x,y \in L$ have a minimal upper bound $x\vee y$, called their \emph{join}, and a maximal lower bound $x\wedge y$, called their \emph{meet}.

A poset $L$ is a (meet)-\emph{semilattice} if one only requires the existence of meets.

\mk

The geometric realisation of a poset $L$ is the simplicial complex $|L|$ with vertex set $L$, and whose simplices are chains of $L$. Such a geometric realisation can be endowed with several natural metrics.

\mk

Consider an $n$-dimensional simplex $\sigma$ of $|L|$, whose vertices correspond to a chain $v_0<v_1< \dots <v_n$ in $L$, which is a maximal chain between $v_0$ and $v_n$. Then one can naturally identify $\sigma$ with the standard $n$-simplex of type $\tilde{C}_n$, which is also called the standard orthosimplex. It may be defined as the convex hull in $\R^n$ of the set of points $v_0=(0,0,\dots,0),v_1=(1,0,\dots,0),\dots,v_n=(1,1,\dots,1)$, see Figure~\ref{fig:3_orthoscheme}. It also coincides with a simplex of the barycentric subdivision of the $n$-cube $[0,2]^n$. For any $p \in [1,\infty]$, one may endow this orthosimplex with the standard $\ell^p$ metric of $\R^n$. The resulting length metric on the geometric realisation $|L|$ is called the \emph{$\ell^p$ orthoscheme metric}.

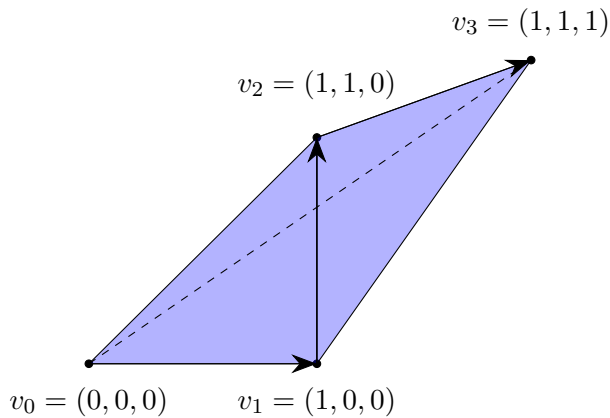
\begin{figure}[ht]
\centering
\begin{tikzpicture}
\def \p {0.05}
\def \op {0.3}
\def \gris {blue}
\draw[fill] (0,0) circle (\p) node(0) {};
\draw[fill] (3,0) circle (\p) node(1) {};
\draw[fill] (3,3) circle (\p) node(2) {};
\draw[fill] (3,3) + (20:3) circle (\p) node(3) {};

\draw[black,fill opacity=\op,fill=\gris] (0.center) -- (1.center) -- (2.center) -- (0.center);
\draw[black,fill opacity=\op,fill=\gris] (1.center) -- (2.center) -- (3.center) -- (1.center);
\draw[dashed] (0.center) -- (3.center);
\draw[-{Stealth[scale=2]}] (0.center) -- (1.center);
\draw[-{Stealth[scale=2]}] (1.center) -- (2.center);
\draw[-{Stealth[scale=2]}] (2.center) -- (3.center);

\node at ([yshift=-0.5cm]0) {\bfseries $v_0=(0,0,0)$};
\node at ([yshift=-0.5cm]1) {\bfseries $v_1=(1,0,0)$};
\node at ([yshift=0.7cm]2) {\bfseries $v_2=(1,1,0)$};
\node at ([yshift=0.5cm]3) {\bfseries $v_3=(1,1,1)$};

\end{tikzpicture}
\caption{The standard $3$-orthosimplex.}
\label{fig:3_orthoscheme}
\end{figure}

\bthm[{\cite[Thm~6.1]{haettel:lattices}}] \label{thm:orthoscheme_linfini}
Let $L$ denote a graded flag semilattice with minimum. The orthoscheme complex of $L$, endowed with the $\ell^\infty$ orthoscheme metric, is injective and admits a unique convex geodesic bicombing. 
\ethm

Here is a natural question.

\begin{question}
Consider a graded poset $L$, with orthoscheme complex $X$ endowed with the $\ell^p$ orthoscheme metric $d_p$. Under which conditions on $L$ and $p$ is the metric space $(X,d_p)$ uniquely geodesic? When is it Busemann-convex?
\end{question}

Note that, in order to also include the case $p=\infty$ (which is not uniquely geodesic), the correct formulation would be to determine when $(X,d_p)$ admits a unique convex bicombing, i.e. when it is a CUB space. For $p=\infty$, Theorem~\ref{thm:orthoscheme_linfini} provides a simple sufficient condition. For $p=2$, it is equivalent to asking for the CAT(0) property, and the only sufficient known condition is when $L$ is a modular semilattice (see~\cite{chalopinchepoihiraiosajda:weakly}, \cite{hirai:nonpositive} and \cite{haettelkielakschwer:strand}). However, some bounded graded lattices are not CAT(0), even though they are injective. The next result suggests that this phenomenon occurs more at higher ranks.

\bpro
Let $L$ denote a rank $2$ graded flag meet-semilattice with minimum, or a rank $3$ bounded graded lattice. Then for any $p \in [2,\infty]$, the orthoscheme complex $X$ of $L$ with the $\ell^p$ metric admits a unique convex geodesic bicombing.
\epro

\bp
Let us consider the case of a bounded graded lattice $L$ of rank $3$, and fix $p \in [2,\infty)$. Assume by contradiction that $x,y \in X$ are such that there exist two $\ell^p$ geodesics between $x$ and $y$. Then $x,y$ both lie in the subcomplex $Y$ of $X$ corresponding to a loop of $6$ elements $a,ab,b,bc,c,ca$ in $L$, with $a,b,c$ of rank $1$ and $ab,bc,ca$ of rank $2$, with the obvious order relations. This subcomplex $Y$ is actually locally isometric to $(\R^3,\ell^p)$, so this contradicts that there were two geodesics between $x$ and $y$. The case of a meet-semilattice is similar, with a loop of $8$ elements instead.
\ep

We are mostly interested in examples where the $\ell^2$ metric is not CAT(0), but the $\ell^p$ metric is uniquely geodesic for $p$ sufficiently large. We may conjecture that, if $2 \leq p < p' \leq \infty$, then $(X,d_p)$ uniquely geodesic implies $(X,d_{p'})$ uniquely geodesic (or has a unique convex bicombing in the case $p'=\infty$). We start with a simple, two-dimensional example which can be realised inside a semilattice.

\bexe
Consider the affine triangle (orthosimplex) $T$ in $\R^{2n+1}$ with vertices $o=(0,\dots,0)$, $(\overbrace{1,\dots,1}^{n},\overbrace{0,0,\dots,0}^{n+1})$, and $(\overbrace{1,\dots,1}^{n+1},\overbrace{0,0,\dots,0}^{n})$ endowed with the $\ell^p$ metric. Let $X$ denote the union of ten alternating copies of $T$ cyclically arranged around $o$, and let $d_p$ denote the length metric associated to the $\ell^p$ metric on $\R^{2n+1}$.

When $p=2$, the angle at $o$ of the triangle $T$ equals $\cos^{-1} \f{n}{\sqrt{n(n+1)}}$. For $n \geq 2$, this angle is smaller than $\f{2\pi}{10}$, and so $(X,d_2)$ is not CAT(0).

For a fixed value of $n$, as $p$ goes to $\infty$ the metric space $(X,d_p)$ converges to $(X,d_\infty)$ (in the Gromov-Hausdorff topology), which is injective. Indeed, when $p=\infty$ the triangle $T$ is isometric to the triangle in $(\R^2,\ell^\infty)$ with vertices $(0,0),(1,0),(1,1)$, independently of $n$. We see that, when $p=\infty$, every geodesic is contained in a union of at most four triangles. As a consequence, if $x,y \in X$ are such that there is some $p_0$ with the property that for all $p\ge p_0$ there exists an $\ell^p$ geodesic $\gamma_p$ from $x$ to $y$ that does not pass through $o$, then it must be the case that $\gamma_p$ is contained in a union of at most four triangles. Thus, if $p$ is large enough, the metric space $(X,d_p)$ is uniquely geodesic.
\eexe

A similar $3$--dimensional example can be realised inside a lattice.

\bexe
Consider the $3$--dimensional orthosimplex $T$ in $\R^7$ with vertices $o=(0,\dots,0)$, $(1,1,1,0,0,0,0)$, $(1,1,1,1,0,0,0)$, and $e=(1,\dots,1)$. Let $X$ denote the union of $8$ alternating copies of $T$ cyclically arranged around the diagonal edge $[o,e]$, and let $d_p$ denote the length metric associated to the $\ell^p$ metric. For $p=2$, the metric space $(X,d_2)$ is not CAT(0). But for $p$ large enough, $(X,d_p)$ is uniquely geodesic.
\eexe

However, it is not true that for every bounded graded lattice $L$, its orthoscheme complex with the $\ell^p$ metric becomes uniquely geodesic for $p<\infty$ large enough.

\bexe
Let us consider the orthoscheme complex $X$ of the following bounded lattice of rank $4$: it has three rank $1$ elements denoted $a,b,c$, three rank $2$ vertices denoted $ab,ac,bc$, and three "artifical" rank $3$ vertices denoted $\tilde{ab},\tilde{ac},\tilde{bc}$. It also has a minimum element $0$, and a maximum element $1$ of rank $4$. The partial order is the natural one, with covering relations between rank $2$ and rank $3$ elements being $ab<\tilde{ab}$, $ac<\tilde{ac}$ and $bc<\tilde{bc}$. We shall see that the $\ell^p$ metric on $X$ is not uniquely geodesic for any $p<\infty$. In order to prove this, it is sufficient to show that in the full subcomplex $Y$ of $X$ with vertices $0,1,a,ab,b,bc$, the $\ell^p$ geodesic from $a$ to $bc$ does not intersect the diagonal $[0,1]$. With coordinates, $Y$ can be represented as a subcomplex of $\R^4$, with vertices $0=(0,0,0,0)$, $1=(1,1,1,1)$, $a=(1,0,0,0)$, $ab=(1,1,0,0)$, $b=(0,1,0,0)$ and $bc=(0,1,1,0)$.

A point $z$ in the triangle spanned by $0,1,b$ has coordinates $z=(x,y,x,x)$, with $0 \leq x \leq y \leq 1$. The $\ell^p$ distance from $a$ to $bc$ via $z$ is
$$\left((1-x)^p+y^p+2x^p\right)^\f{1}{p}+\left((1-x)^p+(1-y)^p+2x^p\right)^\f{1}{p}.$$
If we minimise this quantity over all $(x,y) \in [0,1]^2$, we get $y=\f{1}{2}$ by symmetry, and then $x=\left(1+2^\f{1}{p-1}\right)^{-1} < \f{1}{2}=y$. So the geodesic from $a$ to $bc$ in $Y$ does not intersect the diagonal $[0,1]$. By symmetry, there is another geodesic from $a$ to $bc$ through the other triangle $0,1,c$. Hence, for any $p<\infty$, the metric space $(X,d_p)$ is not uniquely geodesic. 
\eexe

Nonetheless, it is true in this example that \emph{the union of all} $\ell^p$ geodesics converges to the $\ell^\infty$ convex bicombing as we increase $p$.

\section{Smoothness, convexity, and bolicity} \label{sec:bolicity}

Throughout this section, let $X$ be a metric space with a convex, consistent, geodesic bicombing $\sigma$. Our goal in this section is to use $\sigma$ to prove a local-to-global criterion for strong bolicity of $X$. To facilitate this, we shall consider two fine metric properties. As discussed in the introduction, these are \emph{uniform convexity} and \emph{uniform smoothness}, and our definitions are inspired by work of Ohta (see~\cite[\S~8.3]{ohta:comparison} and \cite[Prop.~5.4]{ohta:uniform}). 

In the case of a finite-dimensional normed vector space, uniform smoothness of the metric essentially amounts to asking that the unit ball is uniformly smooth, and uniform convexity essentially amounts to asking that the unit ball is uniformly convex. As noted in \cite[p.275]{bucherkarlsson:ondefinition}, for Banach spaces being strongly bolic is equivalent to being uniformly convex and uniformly smooth.

\subsection{Strong bolicity}

The notion of bolicity was introduced by Kasparov--Skandalis in relation with their work on the Novikov conjecture~\cite{kasparovskandalis:groupes,kasparovskandalis:groups}. Lafforgue then defined a strengthening of this notion, called strong bolicity, in his work on the Baum--Connes conjecture \cite{lafforgue:theorie}. The main result motivating the study of strongly bolic metric spaces is the following result of Lafforgue.

\bthm[{\cite{lafforgue:theorie}}] \label{thm:baum_connes}
If $G$ is a finitely generated group that:
\bit
\item   has the rapid decay property (see~\cite{chatterji:introduction}).
\item   acts properly by isometries on a uniformly locally finite strongly bolic metric space,
\eit
then $G$ satisfies the Baum--Connes conjecture without coefficients.
\ethm

Let us recall the definition of a strongly bolic metric space. For an interesting discussion of this and related notions, see~\cite{bucherkarlsson:ondefinition}.

\begin{defi}[Strong bolicity] \label{def:strong_bolicity}
A metric space $X$ with a geodesic bicombing $\sigma$ is
\bit
\item \emph{strongly B1} if for all $\delta,r>0$ there exists $R=R(\delta,r) \geq 0$ such that, for all $a,a',b,b' \in X$ with $d(a,b),d(a',b'),d(a,b'),d(a',b) \geq R$ and $d(a,a'),d(b,b') \leq r$, we have
\[
d(a,b) + d(a',b') - d(a,b') - d(a',b) \leq \delta;
\]
\item \emph{weakly B2} if for all $C>0$ there exists $N=N(C)>0$ such that, for all $x,y,z \in X$ with $d(x,y),d(x,z) \leq N$ and $d(y,z)>N$, we have 
\[
d(x,\sigma_{yz}(\f12)) < N-C.
\] 
\eit
If $X$ is both strongly B1 and weakly B2, then we say that $X$ is \emph{strongly bolic}. See Figures~\ref{fig:bolicity_B1} and \ref{fig:bolicity_B2}.
\end{defi}

\begin{figure}[ht]
\centering
\begin{tikzpicture}
\def \p {0.05}
\def \op {0.3}
\def \gris {blue}
\draw[fill] (-3,0) circle (\p) node(a) {};
\draw[fill] (-3.2,0.5) circle (\p) node(a') {};
\draw[fill] (4,-0.3) circle (\p) node(b) {};
\draw[fill] (4.5,1) circle (\p) node(b') {};

\draw[blue] (a.center) -- (b.center);
\draw[blue] (a'.center) -- (b'.center);
\draw[green] (a'.center) -- (b.center);
\draw[green] (a.center) -- (b'.center);

\draw[<->] (-2,-0.5) -- (3,-0.5);
\node at (0.5,-1) {$\geq R$};
\node at ([yshift=-0.5cm]a) {$a$};
\node at ([yshift=0.5cm]a') {$a'$};
\node at ([yshift=-0.5cm]b) {$b$};
\node at ([yshift=0.5cm]b') {$b'$};

\draw[<->] (-3.5,0) -- (-3.5,0.5);
\node at (-4,0.25) {$r \geq$};

\draw[<->] (5,-0.3) -- (5,1);
\node at (5.5,0.4) {$\leq r$};

\end{tikzpicture}
\caption{The B1 bolicity condition: smoothness.}
\label{fig:bolicity_B1}
\end{figure}
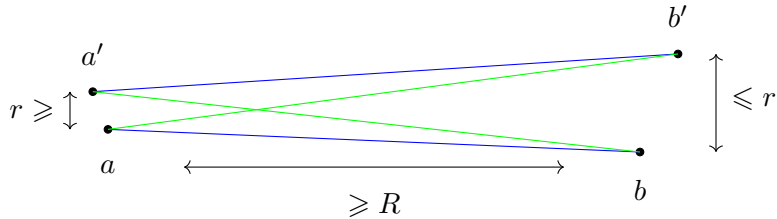

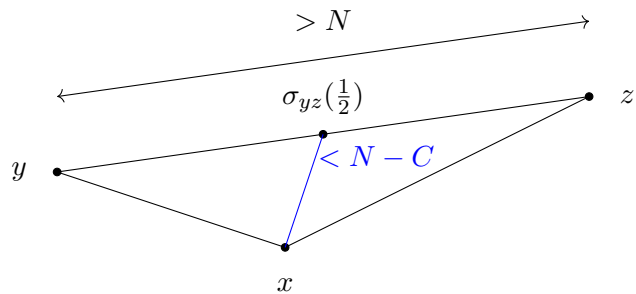
\begin{figure}[ht]
\centering
\begin{tikzpicture}
\def \p {0.05}
\def \op {0.3}
\def \gris {blue}
\draw[fill] (0,0) circle (\p) node(x) {};
\draw[fill] (-3,1) circle (\p) node(y) {};
\draw[fill] (4,2) circle (\p) node(z) {};
\draw[fill] (0.5,1.5) circle (\p) node(m) {};

\draw (y.center) -- (x.center) -- (z.center) -- (y.center);
\draw[blue] (x.center) -- (m.center);

\node at ([yshift=-0.5cm]x) {$x$};
\node at ([xshift=-0.5cm]y) {$y$};
\node at ([xshift=0.5cm]z) {$z$};
\node at ([yshift=0.5cm]m) {$\sigma_{yz}(\f12)$};

\draw[<->] (-3,2) -- (4,3);
\node at (0.5,3) {$> N$};

\node at (1.2,1.2) {$\color{blue} < N-C$};

\end{tikzpicture}
\caption{The B2 bolicity condition: convexity.}
\label{fig:bolicity_B2}
\end{figure}

Note that every CAT(0) metric space is strongly bolic, and it turns out to be the most common source of examples. One very interesting example is the following: consider a Gromov-hyperbolic group $G$, with Cayley graph $X$. Then, according to Mineyev--Yu \cite{mineyevyu:baumconnes}, there exists a strongly bolic $G$--equivariant metric on $X$. Another much simpler construction is given by the Green distance associated to a symmetric finite support random walk on $G$, see~\cite{haissinskymathieu:conjecture}.

Busemann-convexity and strong bolicity are independent properties. In one direction, strong bolicity passes to arbitrary subsets, some of which are non-geodesic. More explicitly, one may consider one of the two strongly bolic metrics discussed above on a non-free hyperbolic group, which does not even have unique combinatorial geodesics.

In the other, consider the infinite wedge sum $\bigvee_{p\in(1,\infty)}(\R^2,\ell^p)$: it is Busemann-convex, but one cannot find uniform strong bolicity constants. As another example, consider a norm $N$ on $\R^2$ whose unit ball is strictly convex but not smooth (e.g. $N=\ell^1+\ell^2$). Then $(\R^2,N)$ is Busemann-convex, but is not strongly bolic.

\brk \label{rem:bolicity}
Originally, Kasparov--Skandalis defined a metric space to be \emph{bolic} if it satisfies the \emph{B1} condition (the strong B1 condition holds for some $\delta$) and what we shall call the \emph{$\ell^2$--B2} condition, which is different to the weak B2 condition above. As they note in \cite[Rem.~2.8]{kasparovskandalis:groups}, this is ``very much a Euclidean condition''. The results of their paper are all proved for spaces satisfying B1 and weak B2.

Definition~\ref{def:strong_bolicity} is never explicitly referred to by Lafforgue as strong bolicity \cite{lafforgue:theorie}. The term ``strongly bolic'' appears first in Mineyev--Yu \cite{mineyevyu:baumconnes}. They state that Lafforgue proves the Baum--Connes conjecture for strongly bolic groups, but later define strong bolicity as being strongly B1 and satisfying $\ell^2$--B2. In their setting of hyperbolic groups, these are equivalent. In the general metric setting, though, the B2 condition is more appropriate than the $\ell^2$--B2 condition, which is why we have reverted to Lafforgue's setting.

The only case we are aware of where one really needs the $\ell^2$--B2 condition instead of the B2 condition is for Kar's result that bolic spaces (in the $\ell^2$ sense) are \emph{asymptotically CAT(0)} \cite{kar:discrete}. This is not true for the plane with an $\ell^p$ metric for $p\ne2$.
\erk

\subsection{Uniform convexity} \label{subsec:uniform_convexity}

The following is an $\ell^p$ modification of \cite[Def.~8.9]{ohta:comparison}, and the corresponding notion of smoothness is considered in Section~\ref{subsec:smoothness}.

\begin{defi}[$(p,k)$--uniform convexity] \label{def:uniform_convex}
Let $p\in(1,\infty)$ and let $k > 0$. We say that $(X,\sigma)$ is \emph{$(p,k)$--uniformly convex} if
\[
d(x,\sigma_{yz}(\f12))^p \,\leq\, \f{1}{2}d(x,y)^p + \f{1}{2}d(x,z)^p - kd(y,z)^p
\]
holds for all $x,y,z\in X$.
\end{defi}

Observe that if $(X,\sigma)$ is $(p,k)$--uniformly convex, then $X$ is $(p,k')$--uniformly convex for all $k'\le k$, so we can always assume that $k\in(0,1)$. The following proposition connects uniform convexity with strong bolicity.

\bpro \label{prop:convex_B2}
If $(X,\sigma)$ is $(p,k)$-uniformly convex, then it is weakly~$B2$.
\epro

\bp
As noted above, we may assume that $k<1$. Given $C>0$, let $N>0$ be sufficiently large that $(1-k)^{\f{1}{p}} < 1-\f{C}{N}$. By definition of $(p,k)$--convexity, if $x,y,z \in X$ are such that $d(x,y),d(x,z) \leq N$ and $d(y,z)>N$, then we have 
\[
d(x,\sigma_{yz}(\f12))^p \,\leq\, \f12N^p+\f12N^p - kN^p \,=\, (1-k)N^p.
\]
We thus have $d(x,\sigma_{yz}(\f12)) \leq (1-k)^{\f1p}N < N-C$.
\ep

The following is essentially an adaptation of~\cite[Prop.~5.4]{ohta:uniform} to our precise definitions.

\begin{pro} \label{prop:convexity_local_to_global}
Let $X$ be a metric space with a convex, consistent bicombing $\sigma$. If $(X,\sigma)$ is locally $(p,k)$--uniformly convex, then it is globally $(p,k)$--uniformly convex.
\end{pro}

\begin{proof}
Given $x,y,z \in X$, let us write $m=\sigma_{yz}(\f12)$. Since $X$ is locally $(p,k)$--uniformly convex, there exists $\eps>0$ such that the ball $B(m,\eps)$ is $(p,k)$--uniformly convex.

Fix $\alpha \in (0,1]$ small enough such that $\alpha d(m,x)$, $\alpha d(m,y)$, and $\alpha d(m,z)$ are all less than $\eps$. Then let us denote $x'=\sigma_{mx}(\alpha)$, $y'=\sigma_{my}(\alpha)$, and $z'=\sigma_{mz}(\alpha)$, all of which lie in $B(m,\eps)$. Note that since $\sigma$ is consistent, we also have $m=\sigma_{y'z'}(\f12)$.

By $(p,k)$--uniform convexity of $B(m,\eps)$, we know that
\[
d(x',m)^p \,\leq\, \f{1}{2}d(x',y')^p + \f{1}{2}d(x',z')^p - kd(y',z')^p.
\]
By convexity of $\sigma$, and since $d(y',z')=\alpha d(y,z)$, we deduce that
\begin{align*}
d(x,m)^p \,&=\, \alpha^{-p} d(x',m)^p \\
	&\leq\, \alpha^{-p}\left(\f{1}{2}d(x',y')^p + \f{1}{2}d(x',z')^p - kd(y',z')^p\right) \\
	&\leq\, \alpha^{-p}\left(\f{1}{2}\alpha^p d(x,y)^p + \f{1}{2} \alpha^p d(x',z')^p - k\alpha^p d(y,z)^p\right) \\
	&\leq\, \f{1}{2}d(x,y)^p + \f{1}{2}d(x,z)^p - kd(y,z)^p,
\end{align*}
which shows that $X$ is globally $(p,k)$-uniformly convex.
\end{proof}

In the case of piecewise $\ell^p$ complexes, we obtain $(p,k)$--convexity of the cells from the following classical results of Clarkson \cite{clarkson:uniformly} and Hanner \cite{hanner:onuniform}, see also~\cite{ballcarlenlieb:sharp}.

\begin{lem} \label{lem:lp_convex}
For every $p \in [2,\infty)$, there exists a constant $k_p=\f1{2^p}$ such that for every $n \geq 0$, the normed vector space $(\R^n,\ell^p)$ is $(p,k_p)$--uniformly convex.

For every $p \in (1,2]$, there exists a constant $k_p>0$ such that for every $n \geq 0$, the normed vector space $(\R^n,\ell^p)$ is $(2,k_p)$--uniformly convex.
\end{lem}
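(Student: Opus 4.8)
The plan is to reduce the statement to classical Clarkson-- and Hanner-type inequalities by exploiting the affine bicombing on $(\R^n,\ell^p)$, for which $\sigma_{yz}(\tfrac12)=\tfrac{y+z}{2}$. Given $x,y,z\in\R^n$, set $u=x-y$ and $v=x-z$; then $x-\sigma_{yz}(\tfrac12)=\tfrac{u+v}{2}$ and $y-z=v-u$, and as $(x,y,z)$ ranges over $(\R^n)^3$ the pair $(u,v)$ ranges over all of $\R^n\times\R^n$ (translating the triple changes neither side of the convexity inequality). Hence $(p,k)$--uniform convexity of $(\R^n,\ell^p)$ is equivalent to
\[
\Big\|\tfrac{u+v}{2}\Big\|_p^p \;\leq\; \tfrac12\|u\|_p^p+\tfrac12\|v\|_p^p-k\,\|u-v\|_p^p \qquad (u,v\in\R^n),
\]
and $(2,k)$--uniform convexity to the same inequality with the three exponents $p$ replaced by $2$.

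For $p\in[2,\infty)$ I would invoke the first Clarkson inequality \cite{clarkson:uniformly}, which holds in every $L^p$ space and in particular in $\ell^p_n$ for all $n$:
\[
\Big\|\tfrac{u+v}{2}\Big\|_p^p+\Big\|\tfrac{u-v}{2}\Big\|_p^p \;\leq\; \tfrac12\|u\|_p^p+\tfrac12\|v\|_p^p.
\]
Since $\big\|\tfrac{u-v}{2}\big\|_p^p=2^{-p}\|u-v\|_p^p$, this is exactly the displayed inequality with $k_p=2^{-p}$, uniformly in $n$, which is the first assertion.

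For $p\in(1,2]$ I would instead use the sharp $2$--uniform convexity of $L^p$, which rests on Hanner's inequality \cite{hanner:onuniform} (see \cite{ballcarlenlieb:sharp} for the optimal constant): in every $\ell^p_n$,
\[
\Big\|\tfrac{u+v}{2}\Big\|_p^2+(p-1)\Big\|\tfrac{u-v}{2}\Big\|_p^2 \;\leq\; \tfrac12\|u\|_p^2+\tfrac12\|v\|_p^2.
\]
Rearranging, and using $\big\|\tfrac{u-v}{2}\big\|_p^2=\tfrac14\|u-v\|_p^2$, this gives the power-$2$ form of the inequality above with $k_p=\tfrac{p-1}{4}>0$, again independent of $n$, which is the second assertion.

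There is essentially no obstacle here beyond bookkeeping: the only point requiring care is the translation between the bicombing-midpoint formulation of $(p,k)$--uniform convexity used in this paper and the symmetric $\{u,v\}\mapsto\{\tfrac{u+v}{2},\tfrac{u-v}{2}\}$ normalisation in which Clarkson's and Hanner's inequalities are classically stated; once the substitution $u=x-y$, $v=x-z$ is made the two coincide verbatim. One could alternatively extract the $p\le 2$ case from the $p\ge 2$ case by a duality argument, but quoting the known sharp inequality directly is cleaner and sidesteps the subtleties of the power-$p$ normalisation.
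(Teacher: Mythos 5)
Your proof is correct and carries out exactly the reduction the paper has in mind: the paper offers no argument for this lemma, merely citing Clarkson, Hanner, and Ball--Carlen--Lieb, and your write-up supplies the routine translation from the bicombing-midpoint form (with $\sigma_{yz}(\tfrac12)=\tfrac{y+z}{2}$ and the substitution $u=x-y$, $v=x-z$) to the classical two-point form, then invokes those same inequalities. The resulting constants $k_p=2^{-p}$ for $p\ge 2$ (Clarkson's first inequality) and $k_p=\frac{p-1}{4}>0$ for $p\in(1,2]$ (sharp $2$--uniform convexity of $L^p$) are correct and uniform in $n$, as required.
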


\bpro \label{prop:cell_convex}
Assume that $X$ is a locally finite-dimensional cell complex such that each cell is endowed with a $(p,k)$--uniformly convex norm. Assume that $X$, with the induced length metric, has a consistent, convex bicombing $\sigma$. Then $(X,\sigma)$ is $(p,k)$--uniformly convex.
\epro

\bp
According to Proposition~\ref{prop:convexity_local_to_global}, it suffices to prove that $X$ is locally $(p,k)$--uniformly convex. Fix a point $u \in X$, and let $F$ denote the minimal closed cell containing $u$. We shall prove that $X$ is locally $(p,k)$--uniformly convex at $u$ by induction on the codimension of $F$ in $X$. If $F$ is a maximal cell, then by assumption the interior of $F$ is locally $(p,k)$-uniformly convex.

Assume that the result is proved for cells with codimension at most $q$, and suppose that $F$ has codimension $q+1$. Given $x,y,z \in X$ lying in a neighbourhood of $F$, there exists sequences $(y_n)_{n \geq 0}$ and $(z_n)_{n \geq 0}$ converging to $y$ and $z$ respectively, such that $m_n=\sigma_{y_n,z_n}(\f12)$ lies in a cell $F_n$ of codimension $\leq q$ for all $n \in \N$. For each $n \in \N$, there exists $\alpha_n>0$ small enough such that $\sigma_{m_n,x}(\alpha_n)$, $\sigma_{m_n,y_n}(\alpha_n)$ and $\sigma_{m_n,z_n}(\alpha_n)$ all lie in a $(p,k)$-uniformly convex neighbourhood of $m_n$, by induction. Hence, as in the proof of Proposition~\ref{prop:convexity_local_to_global}, we deduce that
\[
d(x,m_n)^p \,\leq\, \f{1}{2}d(x,y_n)^p + \f12d(x,z_n)^p - kd(y_n,z_n)^p.
\]
Since this holds for all $n \in \N$, we deduce by considering the limit as $n \ra \infty$ that
\[
d(x,m)^p \,\leq\, \f{1}{2}d(x,y)^p + \f{1}{2}d(x,z)^p - kd(y,z)^p,
\]
which shows that $X$ is $(p,k)$-uniformly convex.
\ep

\subsection{Uniform smoothness} \label{subsec:smoothness}

We now turn to uniform smoothness of metric spaces. The proofs are similar to those of Section~\ref{subsec:uniform_convexity}, as is to be expected by duality in the linear setting. Let us begin by recalling the notion of uniform smoothness for Banach spaces (see \cite[Def.~1.e.1]{lindenstrausstzafriri:classical:2}, for instance).

\begin{defi}[Uniform smoothness] \label{def:uniform_smooth}
Let $Y$ be a Banach space. The \emph{modulus of smoothness} of $Y$ is the function 
\[
\rho_Y(\tau) \,=\, \sup\left\{\f{\|z+y\|+\|z-y\|}{2\|z\|}-1 \,:\, y,z \in Y \bs \{0\}, \|y\|=\tau\|z\|\right\}.
\]
We say $Y$ is \emph{uniformly smooth} if $\lim_{\tau\to0}\f{\rho_X(\tau)}\tau=0$. 
\end{defi}

One can consider spaces where the modulus of smoothness decays more quickly than linearly---we shall be interested in quadratic decay. This idea first appears in \cite{ballcarlenlieb:sharp} as the following.

\begin{defi}[2--uniform smoothness, {\cite[p.~468]{ballcarlenlieb:sharp}}]
A Banach space $Y$ is said to be \emph{2--uniformly smooth} if there is some $K$ such that
\[
\f{\|z+y\|^2+\|z-y\|^2}2\,\le\,\|z\|^2+\|Ky\|^2
\]
holds for all $y,z\in Y$.
\end{defi}

One can straightforwardly compute
\begin{align*}
\f{\|z+y\|+\|z-y\|}{2\|z\|} \,
    &\le\, \f1{\|z\|}\left(\f{\|z+y\|^2+\|z-y\|^2}2\right)^{\f12} \\
    &\le\, \left(1+\f{\|Ky\|^2}{\|z\|^2}\right)^{\f12} \\
    &\le\, 1+\f{K^2}2\f{\|y\|^2}{\|z\|^2},
\end{align*}
which shows that $\rho_Y(\tau)\le\f{K^2}2\tau^2$. In fact, according to \cite[Prop.~7]{ballcarlenlieb:sharp}, 2--uniform smoothness is equivalent to quadratic decay of the modulus of smoothness.

Returning to the setting of a (not necessarily linear) metric space $X$ with a convex, consistent bicombing $\sigma$, we consider the following adaptation of 2--uniform smoothness, for some constant $C>0$.

\begin{defi}[$(2,C)$--uniform smoothness]
We say that $(X,\sigma)$ is \emph{$(2,C)$--uniformly smooth} if, for every $r,R>0$ with $R \geq 2r$, the inequality
\[
d(x,\sigma_{yz}(\f12)) \leq d(x,z) - \f12d(y,z) + \f{Cr^2}{R}
\]
is satisfied by every triple $x,y,z \in X$ with $d(x,y) \leq r$ and $d(y,z) \geq R$.
\end{defi}

\begin{lem} \label{lem:2smooth_to_2Csmooth}
Let $(X,\sigma)$ be a Banach space, with $\sigma$ being the natural affine bicombing. If $X$ is 2--uniformly smooth with constant $K$, then it is $(2,\f{K^2}4)$--uniformly smooth.
\end{lem}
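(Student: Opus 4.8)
The plan is to reduce everything to a statement about the norm of the Banach space $X$ and to recognise that statement as the infinitesimal content of $2$--uniform smoothness. Since $\sigma$ is the affine bicombing, $\sigma_{yz}(\tfrac12)$ is the midpoint $m := \tfrac12(y+z)$, so the inequality to be proved reads $\|x - m\| \le \|x - z\| - \tfrac12\|y - z\| + \tfrac{K^2 r^2}{4R}$ whenever $\|x - y\| \le r$, $\|y - z\| \ge R$ and $R \ge 2r$. First I would establish the sharper, parameter-free inequality
\[
\|x - m\| \,\le\, \|x - z\| \,-\, \tfrac12\|y - z\| \,+\, \frac{K^2 \|x-y\|^2}{2\,\|x - z\|},
\]
valid for all $x,y,z \in X$ with $x \neq z$, and then deduce the $(2,C)$--form by substituting the hypotheses.

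The key algebraic step is the identity: writing $w = x - z$ and $p = x - y$, one has $w + p = 2(x - m)$ and $w - p = y - z$. Hence, after multiplying through by $2$, the parameter-free inequality is exactly
\[
\|w + p\| + \|w - p\| \,\le\, 2\|w\| + \frac{K^2\|p\|^2}{\|w\|}.
\]
But this is precisely the estimate furnished by the modulus of smoothness applied with basepoint $w$ and perturbation $p$: dividing by $2\|w\|$, the left-hand side equals $1 + \bigl(\tfrac{\|w+p\| + \|w-p\|}{2\|w\|} - 1\bigr) \le 1 + \rho_X\bigl(\tfrac{\|p\|}{\|w\|}\bigr)$, and the bound $\rho_X(\tau) \le \tfrac{K^2}{2}\tau^2$ recorded in the paragraph just before the lemma gives $\rho_X\bigl(\tfrac{\|p\|}{\|w\|}\bigr) \le \tfrac{K^2}{2}\cdot\tfrac{\|p\|^2}{\|w\|^2}$, which is what is wanted.

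To pass from the parameter-free inequality to the $(2,C)$--form, I would bound its error term using $\|x-y\| \le r$ together with the estimate $\|x - z\| \ge \|y - z\| - \|x - y\| \ge R - r \ge \tfrac R2$; the last inequality is exactly where the hypothesis $R \ge 2r$ is used, and it shows that $\tfrac{K^2\|x-y\|^2}{2\|x-z\|}$ is at most a constant multiple of $\tfrac{r^2}{R}$, completing the argument.

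I expect the only genuinely non-mechanical point to be the choice of basepoint in the smoothness estimate. One must centre the inequality of parallelogram type at $w = x - z$, whose norm is comparable to the large quantity $\|y - z\|$, so that the perturbation ratio $\|p\|/\|w\| = \|x-y\|/\|x-z\|$ really is small and the quadratic decay of $\rho_X$ is being exploited; centring instead at $x - y$, or at $\tfrac12(y-z)$, would give only a linear — hence useless — error term. Everything else in the argument is the triangle inequality together with the already-established relation between $2$--uniform smoothness and the modulus of smoothness.
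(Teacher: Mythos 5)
Your argument is correct and is essentially the paper's own proof in slightly different notation. The paper sets $y' = y - x$, $z' = z - x$ and applies the modulus-of-smoothness bound $\rho_X(\tau) \le \tfrac{K^2}{2}\tau^2$ at $z'$ with perturbation $y'$; you use $w = x - z = -z'$, $p = x - y = -y'$ and apply the same bound at $w$ with perturbation $p$. The underlying algebraic identities $\|z'+y'\| = \|w+p\| = 2\|x-m\|$ and $\|z'-y'\| = \|w-p\| = \|y-z\|$ are identical in both cases, as is the final step using the triangle inequality $\|x-z\| \ge R - r$ together with $R \ge 2r$. Your remark about why one must centre at $x-z$ rather than at $x-y$ or $\tfrac12(y-z)$ is a genuine insight the paper leaves implicit, and it is exactly right.

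One quantitative point worth flagging, which applies equally to the paper's own proof. Following your final step literally, the error term is bounded by
\[
\frac{K^2\|x-y\|^2}{2\|x-z\|} \,\le\, \frac{K^2 r^2}{2(R-r)} \,\le\, \frac{K^2 r^2}{R},
\]
so the constant that actually emerges is $K^2$, not the $\tfrac{K^2}{4}$ asserted in the statement. The paper's displayed chain ends with $\tfrac{K^2}{2}\tfrac{r^2}{R-r} \le \tfrac{K^2}{4}\tfrac{r^2}{R}$, but this inequality is false whenever $r,R>0$: since $R - r < R$, one has $\tfrac{1}{R-r} > \tfrac{1}{R}$, which forces the left side to exceed $\tfrac{K^2}{2}\tfrac{r^2}{R}$. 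This is only a constant-factor slip and is harmless for every later use of the lemma in the paper (all that matters is $(2,C)$-uniform smoothness for \emph{some} finite $C$), but the honest constant produced by this argument is $K^2$, and your phrase ``a constant multiple of $r^2/R$'' should be sharpened to say that explicitly rather than suggest the claimed $\tfrac{K^2}{4}$ is attained.
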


\begin{proof}
Suppose that $x,y,z\in X$ have $d(x,y)\le r$ and $d(y,z)\ge R$ for some $r,R>0$ with $R\ge2r$. Using the linear structure of $X$, let $y'=y-x$, $z'=z-x$. Let $\tau=\f{\|y'\|}{\|z'\|}$. The various definitions now give 
\[
\dist(x,\sigma_{yz}(\f12)) \,=\, \f{\|z'+y'\|}2 \,\le\, \|z'\|\left(\rho_X(\tau) - \f{\|z'-y'\|}{2\|z'\|}+1\right).
\]
By the bound on the modulus of smoothness following from 2--smoothness and the fact that $R\ge2r$, we see that 
\begin{align*}
d(x,\sigma_{yz}(\f12)) - d(x,z) + \f12d(y,z) \,
    &=\,   \f{\|z'+y'\|}2 - \|z'\|\left(1 - \f{\|z'-y'\|}{2\|z'\|}\right) \\
    &\le\, \|z'\|\rho_X(\tau) \\    
    &\le\, \f{K^2}2\f{d(x,y)^2}{d(x,z)} \\
    &\le\, \f{K^2}2\f{r^2}{R-r} \,\le\, \f{K^2}4\f{r^2}R. \qedhere
\end{align*}
\end{proof}

According to Lemma~\ref{lem:lp_convex}, for every $q \in (1,2]$, there exists $k>0$ such that $(\R^n,\ell^q)$ is $(2,k)$--uniformly convex. By duality, this implies that $(\R^n,\ell^p)$ is $2$--uniformly smooth (with associated constant depending on $k$, where $\f1p+\f1q=1$), hence $(2,C)$--uniformly smooth by Lemma~\ref{lem:2smooth_to_2Csmooth}. In fact one can obtain a more precise constant.

\blem \label{lem:lp_smooth}
For every $p \in [2,\infty)$ and every $n \geq 0$, the normed vector space $(\R^n,\ell^p)$ is $(2,\f{(p-1)^2}4)$--uniformly smooth.
\elem

\begin{proof}
According to \cite[Prop.~3]{ballcarlenlieb:sharp}, for any $p\ge2$, the space $(\R^n,\ell^p)$ is 2--uniformly smooth, with constant $K=p-1$. The result follows from Lemma~\ref{lem:2smooth_to_2Csmooth}.
\end{proof}

Note that, when $p<2$, the space $(\R^2,\ell^p)$ is not $2$--uniformly smooth (see~\cite[Prop.~3]{ballcarlenlieb:sharp}): indeed the modulus of smoothness decays like $\tau^p$. This behaviour is incompatible with the scaling argument used in Proposition~\ref{prop:smooth_local_to_global}.


The following uses essentially the same idea as in~\cite[Prop.~5.4]{ohta:uniform}, similarly to the case of uniform convexity in Proposition~\ref{prop:convexity_local_to_global}.

\bpro \label{prop:smooth_local_to_global}
Let $X$ be a metric space with a convex, consistent, bicombing $\sigma$. If $(X,\sigma)$ is locally $(2,C)$--uniformly smooth, then it is globally $(2,C)$--uniformly smooth.
\epro
 
\bp
Given $x,y,z \in X$ satisfying $d(x,y)\le r$ and $d(y,z)\ge R$, where $R\ge2r$, let us denote $m=\sigma_{yz}(\f12)$. Since $X$ is locally $(2,C)$--uniformly smooth, there exists $\eps>0$ such that $B(m,\eps)$ is $(2,C)$--uniformly smooth. 

Fix $\alpha \in (0,1]$ small enough such that $\alpha d(m,x)$, $\alpha d(m,y)$, and $\alpha d(m,z)$ are all less than $\eps$. Then $x'=\sigma_{m,x}(\alpha)$, $y'=\sigma_{m,y}(\alpha)$, and $z'=\sigma_{m,z}(\alpha)$ all lie in $B(m,\eps)$. Note that since $\sigma$ is a consistent bicombing, we have $m=\sigma_{y'z'}(\f12)$, and $d(y',z')=\alpha d(y,z)\ge\alpha R$. By convexity of $\sigma$, we also know that $d(x',y') \leq \alpha d(x,y) \leq \alpha r$. The $(2,C)$--uniform smoothness of $B(m,\eps)$ now tells us that
\[
d(x',m) \leq d(x',z') - \f12d(y',z')+\f{C\alpha^2 r^2}{\alpha R}.
\]
Convexity of $\sigma$ also gives $d(x',z')\le\alpha d(x,z)$. Again using the fact that $\sigma$ is a consistent bicombing, we conclude that
\begin{align*} 
d(x,m) \,&=\, \alpha^{-1} d(x',m) \\
    &\leq\, \alpha^{-1}(d(x',z') - \f12d(y',z')+\f{C\alpha r^2}R) \\
    &\leq\, \alpha^{-1}(\alpha d(x,z) - \f12\alpha d(y,z)+\f{C\alpha r^2}R) \\
    &=\, d(x,z) - \f{1}{2}d(y,z) + \f{Cr^2}{R},
\end{align*}
so $X$ is globally $(2,C)$--uniformly smooth.
\ep

\bpro \label{prop:cell_smooth}
Let $X$ be a locally finite-dimensional cell complex such that each cell is endowed with a $(2,C)$--uniformly smooth norm. If $X$, with the induced length metric, has a consistent, convex bicombing $\sigma$, then $(X,\sigma)$ is $(2,C)$--uniformly smooth.
\epro

\bp
According to Proposition~\ref{prop:smooth_local_to_global}, it suffices to prove that $X$ is locally $(2,C)$--uniformly smooth. Fix a point $u \in X$, and let $F$ denote the minimal closed cell containing $u$. We shall prove that $X$ is locally $(2,C)$--uniformly smooth at $u$ by induction on the codimension of $F$ in $X$. If $F$ is a maximal cell, then by assumption the interior of $F$ is locally $(2,C)$--uniformly smooth.

Assume that the result is proved for cells with codimension at most $q$, and suppose that $F$ has codimension $q+1$. Let $r>0$, let $R\ge2r$, and suppose that $x,y,z$ are points of $X$ lying in a neighbourhood of $F$ that satisfy $d(x,y)\le r$ and $d(y,z)\ge R$. There exists sequences $(y_n)_{n \geq 0}$ and $(z_n)_{n \geq 0}$ converging to $y$ and $z$ respectively, such that $m_n=\sigma_{y_n,z_n}(\f12)$ lies in a cell $F_n$ of codimension $\leq q$ for all $n \in \N$. Moreover, we can choose these sequences so that $R_n=d(y_n,z_n)\ge2d(x,y_n)=2r_n$. By induction, for each $n\in\N$ there exists $\alpha_n>0$ small enough that $x_n'=\sigma_{m_n,x}(\alpha_n)$, $y_n'=\sigma_{m_n,y_n}(\alpha_n)$, and $z_n'=\sigma_{m_n,z_n}(\alpha_n)$ all lie in a $(2,C)$--uniformly smooth neighbourhood of $m_n$. Note that by convexity of $\sigma$ we have 
\[
2d(x_n',y_n') \,\le\, 2\alpha_nd(x,y_n) \,=\, 2\alpha_nr_n 
    \,\le\, \alpha_nR_n \,\le\, \alpha_nd(y_n,z_n) \,=\, d(y_n',z_n').
\]
Hence, as in the proof of Proposition~\ref{prop:smooth_local_to_global}, we deduce that
\[
d(x,m_n) \leq d(x,z_n) - \f12d(y_n,z_n) + \f{Cr_n^2}{R_n}.
\]
Since this holds for all $n\in\N$, we deduce by considering the limit as $n\ra\infty$ that
\[
d(x,m(y,z)) \leq d(x,z) - \f12d(y,z) + \f{Cr^2}{R},
\]
which shows that $X$ is $(2,C)$--uniformly smooth.
\ep

Similarly to $(p,k)$--uniform convexity, $(2,C)$--uniform smoothness is related to bolicity, as we now make precise, strengthening \cite[Prop.~1]{bucherkarlsson:ondefinition}.

\bpro \label{prop:smooth_B1}
If $(X,\sigma)$ is $(2,C)$--uniformly smooth, then it is strongly B1.
\epro

\begin{proof}
For each $\delta>0$ and $r \geq 0$, let us define $R= \max\left\{\f{2Cr^2}{\delta},2r\right\}$. Suppose that $a,a',b,b' \in X$ satisfy $d(a,b),d(a,b'),d(a',b),d(a',b') \geq R$, $d(a,a'),d(b,b') \leq r$. Let $m=\sigma_{ab}(\f12)$. By $(2,C)$--uniform smoothness, we know that
\[
d(a',m) \,\leq\, d(a',b) - \f{1}{2}d(a,b)+\f{Cr^2}{R};
\]
\[
d(b',m) \,\leq\, d(b',a) - \f{1}{2}d(a,b)+\f{Cr^2}{R}.
\]
By summing these inequalities and using the fact that $d(a',b')\le d(a',m)+d(b',m)$, we conclude that
\[
d(a,b) + d(a',b') - d(a,b') - d(a',b) \,\le\, 2\f{Cr^2}R \,\le\, \delta. \qedhere
\]
\end{proof}

\subsection{Local-to-global for strong bolicity}

The following summarises the combination of Propositions~\ref{prop:cell_convex}, \ref{prop:convex_B2}, \ref{prop:cell_smooth}, and~\ref{prop:smooth_B1}.

\bthm \label{thm:convex_bicombing_implies_bolicity}
Let $X$ be a locally finite-dimensional cell complex where each cell is endowed with a $(p,k)$-uniformly convex, $(2,C)$-uniformly smooth norm. If, with the induced length metric, $X$ has a consistent, convex bicombing $\sigma$, then $X$ is $(p,k)$-uniformly convex, $(2,C)$-uniformly smooth, and strongly bolic.
\ethm

According to Lemmas~\ref{lem:lp_convex} and~\ref{lem:lp_smooth}, we can combine this theorem with the results from Section~\ref{sec:bicombing} in the case of an $\ell^p$-norm to obtain a simple local-to-global criterion for strong bolicity.

\bthm \label{thm:criterion_piecewise_lp_bolicity}
Let $X$ be a cell complex with finitely many shapes. Let $p\in[2,\infty)$ and give each cell an $\ell^p$ norm. Suppose that:
\bit
\item $X$ is simply connected;
\item $X$ locally admits a consistent geodesic bicombing;
\item for any two intersecting maximal cells $A,B$ of $X$, the union $A \cup B$, with the induced length metric, is Busemann-convex.
\eit
With the induced length metric, $X$ is Busemann-convex, $(p,k)$--uniformly convex, $(2,C)$--uniformly smooth, and strongly bolic.
\ethm

As previously discussed, finding a strongly bolic metric for a hyperbolic group is not straightforward. Nonetheless, Theorem~\ref{thm:criterion_piecewise_lp_bolicity} could potentially provide another strongly bolic model. Indeed, Lang proved that every hyperbolic group $G$ acts geometrically on an orthoscheme complex $X$, which is injective for the $\ell^\infty$ metric and admits a unique convex bicombing (see~\cite{lang:injective}). If the $\ell^p$ metric on $X$ is uniquely geodesic for some sufficiently large $p$ (depending on $G$), then this would be such a model. Note that if this complex $X$, endowed with $\ell^2$ metric, is locally uniquely geodesic, it would positively answer the famous open problem of whether hyperbolic groups are CAT(0).

\subsection{Splitting of centralisers} \label{subsec:splitting_centralisers}

One may wonder whether one could apply Lafforgue's Theorem~\ref{thm:baum_connes} to the mapping class groups of surfaces, which are known to have the rapid decay property \cite{behrstockminsky:centroids}. However, we shall show that any proper action of the mapping class group on a strongly bolic space would have to be highly distorted.

The main point is the simple observation that the splitting result for CAT(0) spaces, as stated in~\cite[Thm~6.12]{bridsonhaefliger:metric}, generalises to strongly B1 metric spaces. Note that the relationship between strong bolicity and horofunctions has already been observed by Haissinsky and Mathieu \cite{haissinskymathieu:conjecture}. The main rough idea is that, for a smooth metric space, the horofunction boundary coincides with the visual boundary.

\bthm[Generalisation of {\cite[Thm~6.12]{bridsonhaefliger:metric}}] \label{thm:centralisers_split}
Let $X$ be a strongly $B1$ metric space, and let $G$ be a finitely generated group acting by isometries on $X$. Assume that $G$ contains a central element $z$ such that $(z^n \cdot x_0)_{n \in \Z}$ is a quasi-geodesic in $X$, for some $x_0 \in X$. Then some finite-index subgroup of $G$ contains $\<z\>$ as a direct factor.
\ethm

\bp
Consider the horofunction bordification of $X$: it is the closure $\ov{X} = X \sqcup \partial X$ of the map
\beq X & \ra & {\mathcal F}(X,\R)/\R \\
x & \mapsto & [y \mapsto d(x,y)],\eeq
where $\R$ acts on the space ${\mathcal F}(X,\R)$ of functions by translation. We will denote by $[b]$ the translation class of a map $b:X \ra \R$.

Consider two sequences $(x_n)_{n \in \N}$ and $(y_n)_{n \in \N}$ in $X$ such that there exists $K \geq 0$ for which $d(x_n,y_n) \leq K$ for all $n\in\N$. According to the strong $B1$ property, if $(x_n)$ converges to $[b] \in \partial X$, then the sequence $(y_n)_{n \in \N}$ also converges to $[b]$. In particular, since $(z^n \cdot x_0)_{n \in \Z}$ is a quasi-geodesic in $X$, we deduce that the sequence $(z^n \cdot x_0)_{n \in \N}$ converges to some $[b] \in \partial X$ independently of $x_0$.

For any $g \in G$, since $z$ and $g$ commute, we deduce that the sequence $(gz^n \cdot x_0)_{n \in \N}$ converges to $[b]$, so $[g\cdot b]=[b]$. In particular, for each $g \in G$, there exists $r_g \in \R$ such that $g \cdot b=b + r_g$. The map $\phi: g \in G \mapsto r_g \in \R$ is a group homomorphism and $\phi(z) < 0$. Since $G$ is finitely generated, the image of $\phi$ is a finitely generated abelian group $\Z^d$, for some $d \geq 1$. Up to passing to a finite-index subgroup $G_0$ of $G$ containing $z$, and up to postcomposing $\phi$ with a homomorphism to $\Z$, we may assume that $\phi(G_0) = \Z$, and that $\phi(z)=-1$. We deduce that $G_0$ splits as $\Ker\phi \times \Z$.
\ep

As in the proof that mapping class groups do not act properly semisimply on CAT(0) spaces (\cite[Thm~7.26]{bridsonhaefliger:metric}, also see \cite[Thm~4.2]{kapovichleeb:actions}), we have the corresponding corollary mentioned above.

\bcor \label{cor:mcg_not_bolic}
For $g \geq 3$, the mapping class group $\Mod(S_g)$ has no proper action by isometries on a strongly bolic metric space inducing a quasi-isometric embedding.
\ecor

\bp
Following the proof of \cite[Thm~7.26]{bridsonhaefliger:metric}, consider the Dehn twist $z \in \Mod(S_g)$ along a separating simple closed curve in $S_g$ bounding a genus $1$ subsurface. Then the centraliser of $z$ in $\Mod(S_g)$ contains a subgroup $G$ isomorphic to a cocompact lattice in $\widetilde{\PSL(2,\R)}$. In particular, $G$ does not virtually split. 

Since $z$ has infinite order and is undistorted in $\Mod(S_g)$, the sequence $(z^n)_{n \in \Z}$ is a quasi-geodesic in $\Mod(S_g)$ with respect to some word metric. Assume that $\Mod(S_g)$ acts properly by isometries on a strongly bolic metric space inducing a quasi-isometric embedding. Then, for any $x_0 \in X$, the sequence $(z^n \cdot x_0)_{n \in \Z}$ is a quasi-geodesic in $X$. This contradicts Theorem~\ref{thm:centralisers_split}.
\ep

In particular, looking for proper semisimple actions of mapping class groups on piecewise $\ell^p$ complexes is hopeless.

\bcor \label{cor:mcg_not_lp}
For $g \geq 3$, the mapping class group $\Mod(S_g)$ does not act properly by semisimple isometries on a Busemann-convex, piecewise $\ell^p$ complex $X$ (where $2 \leq p < \infty$).
\ecor

\bp
According to Theorem~\ref{thm:criterion_piecewise_lp_bolicity}, $X$ satisfies the strong $B1$ property. Moreover, Busemann-convexity of $X$ means that the flat torus theorem holds \cite[Thm~1.2]{descombeslang:flats}, so the Dehn twist $z$ from the proof of Corollary~\ref{cor:mcg_not_bolic} is undistorted in any proper action of $\Mod(S_g)$ on $X$. We conclude in the same way as in Corollary~\ref{cor:mcg_not_bolic}.
\ep

It is interesting to contrast these results with \cite[Thm~1.2]{bestvinabrombergfujiwara:proper}, which states that mapping class groups admit proper actions on finite products of (locally infinite) quasi-trees such that orbits are quasi-isometric embeddings.

\section{\texorpdfstring{$\ell^p$}{lp}-metrics on CAT(0) cube complexes} \label{sec:lp_cube_complex}

The goal of this section is to show that every CAT(0) cube complex $X$ equipped with the piecewise $\ell^p$-metric is Busemann-convex. Unless explicitly stated otherwise, we will always consider the case $p \in (1,\infty)$. The main step is to show that $X$ is locally uniquely geodesic, and we then apply a local-to-global result. As part of our proof, we obtain a characterisation of local $\ell^p$-geodesics similar to \cite[Thm~5.8]{ardilaowensullivant:geodesics}, which in turn uses \cite{owenprovan:fast}. Also see \cite{hayashi:polynomial}.

Note that by Bridson's thesis, we already know that $X$ is (not necessarily uniquely) geodesic if it is finite-dimensional. Of course, we shall find \emph{a posteriori} from Busemann-convexity that $X$ is uniquely geodesic, even if it is not finite-dimensional or even locally finite-dimensional.

Also note that we will be considering CAT(0) cube complexes that need not be finite-dimensional, hence the results from Sections~\ref{sec:bicombing} and \ref{sec:bolicity} do not apply.

\subsection{Generalities on \texorpdfstring{$\ell^p$}{lp} metrics}

\begin{lem}[{\cite[Thm~1.1]{bridson:geodesics}}] \label{lem:geodesic}
Let $X$ be a finite-dimensional CAT(0) cube complex. If each cube is given the $\ell^p$-metric, then the induced length metric on $X$ is geodesic.
\end{lem}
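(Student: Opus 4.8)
The goal is to show that on a finite-dimensional CAT(0) cube complex $X$, the piecewise $\ell^p$-metric is geodesic. My plan is to follow Bridson's strategy for $M_\kappa$-polyhedral complexes with finitely many shapes, observing that finite-dimensionality is precisely what lets us run that argument even though a CAT(0) cube complex may have infinitely many cubes.

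First I would verify that the piecewise $\ell^p$ metric $d$ is indeed a length metric and is complete. Given two points $x,y$ in the same path component, connect them by a rectifiable path $\gamma$ (which exists since each cube is convex in a normed space and the complex is connected) and note that $d(x,y) = \inf \ell(\gamma)$ over such paths. Completeness follows from finite-dimensionality: any Cauchy sequence eventually stays in a ball of finite radius, which (since $X$ is finite-dimensional) meets only finitely many cubes up to the action of identifications; more carefully, one uses that in a finite-dimensional cube complex a bounded set is contained in a finite subcomplex, so the Cauchy sequence converges there. Then I would invoke the Hopf--Rinow-type theorem: a complete, locally compact length space is geodesic. Local compactness again comes from finite-dimensionality — a small ball around any point is contained in the (finite-dimensional, locally finite-in-the-relevant-sense) star of the carrier cell, and the star of a cell in a finite-dimensional cube complex, while possibly having infinitely many top cells, still gives a locally compact neighborhood because each cube is compact and only finitely many cubes are needed to fill a sufficiently small ball around $x$ (the link being finite-dimensional).

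Actually the cleanest route is to cite Bridson's theorem directly: an $M_\kappa$-polyhedral complex with $\operatorname{Shapes}(K)$ finite is a complete geodesic space. The $\ell^p$ norm on a cube of a fixed dimension gives only finitely many isometry types of cells in each dimension, and finite-dimensionality bounds the dimension, so $X$ has finitely many shapes. Then Bridson's result (which only needs finitely many shapes, not local finiteness of the complex) applies verbatim: the key lemma there is that the distance from a point to the complement of the star of its carrier is bounded below by a constant depending only on the shapes, which prevents geodesic-approximating sequences from escaping to infinity and gives the needed compactness to extract a limiting geodesic via Arzelà--Ascoli.

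The main obstacle is the completeness/properness input needed for the Hopf--Rinow argument, since a CAT(0) cube complex need not be locally finite. The resolution is exactly the finitely-many-shapes hypothesis: it yields a uniform lower bound $\epsilon_0 > 0$ on the ``size'' of the star of any cell (more precisely, on $d(x, X \setminus \operatorname{st}(\operatorname{carr}(x)))$ in terms of $d(x,\operatorname{carr}(x))$), so that a path of length $< \epsilon_0$ from $x$ stays in $\operatorname{st}(\operatorname{carr}(x))$, which is a finite union of cubes and hence compact. This makes $X$ a complete length space in which closed balls of small radius are compact, and an Arzelà--Ascoli / midpoint-completeness argument then produces geodesics between arbitrary points. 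I would present this as: the finitely-many-shapes hypothesis is automatic here (finite dimension plus the single $\ell^p$ norm per dimension), so Bridson's theorem \cite{bridson:geodesics} applies and $X$ is geodesic.
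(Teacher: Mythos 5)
The paper does not prove this lemma --- it simply cites Bridson's thesis, and your final paragraph reaches the same conclusion: finite dimension plus the $\ell^p$ norm yields finitely many shapes, so Bridson's theorem applies. That identification is correct and is the entire content of the paper's treatment of the statement.

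Nonetheless, several assertions in your supporting discussion are false when the cube complex is not locally finite, which can happen even in finite dimension (e.g.\ a single vertex with infinitely many incident edges): a bounded set need not lie in a finite subcomplex, a small ball around a point need not meet only finitely many cubes, and the star of a cell need not be a finite union of cubes and hence need not be compact. Consequently the Hopf--Rinow route you first sketch does not work, and your description of Bridson's resolution is also inaccurate: the closed star is not compact in general, so one cannot extract a limiting geodesic by Arzel\`a--Ascoli on a compact star. Bridson's actual argument avoids local compactness; the shape-controlled lower bound on the distance from a point to the complement of the star of its carrier is used to show each path of given length passes through a bounded number of cells (a ``taut string''), and completeness plus existence of geodesics is then established by an iterated-refinement argument on these strings. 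Your conclusion is right; the mechanism is not quite what you describe.
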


We begin with a general lemma that will help simplify a number of arguments. It states that, on a product, the metric we are considering is just the product of the metrics we are considering on the factors.

\begin{lem} \label{lem:lp_product}
Let $p \in [1,\infty]$, and let $X_1$ and $X_2$ be connected cell complexes. Endow each cell of $X_i$ with the $\ell^p$ metric, and give $X_i$ the induced length metrics $d_i$. Consider the product $X=X_1 \times X_2$, where each cell of $X$ is given the associated $\ell^p$ metric from $X_1$ and $X_2$. The length metric $d$ on $X$ satisfies
\beq &d^p = d_1^p+d_2^p & \mbox{ if $p < \infty$} \\
&d = \max(d_1,d_2) & \mbox{ if $p=\infty$}.\eeq
\end{lem}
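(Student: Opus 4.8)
The plan is to verify the formula pointwise by comparing rectifiable paths in $X$ with pairs of rectifiable paths in the factors, using that the length functional on $X$ splits the way the norm on a single cell does. First I would reduce to the case of finitely many cells by noting that any rectifiable path meets only finitely many cells, so without loss of generality $X_1,X_2$ (hence $X$) are finite cell complexes; and I would treat $p<\infty$ and $p=\infty$ in parallel, writing $\|(s,t)\|_p$ for $(s^p+t^p)^{1/p}$ or $\max(s,t)$ accordingly. The core observation is that if $\gamma=(\gamma_1,\gamma_2):[0,1]\to X=X_1\times X_2$ is a path, then within each cell $C=C_1\times C_2$ the $\ell^p$-norm on $C$ is exactly the $\ell^p$-combination of the $\ell^p$-norms on $C_1$ and $C_2$ (this is just the definition of the product cell metric), so on that cell $\operatorname{length}(\gamma)$ is computed from $\operatorname{length}(\gamma_1)$ and $\operatorname{length}(\gamma_2)$ on the corresponding subintervals; summing/integrating, $\operatorname{length}_X(\gamma)=\|(\operatorname{length}_{X_1}(\gamma_1),\operatorname{length}_{X_2}(\gamma_2))\|_p$ when $\gamma_1,\gamma_2$ are parametrised compatibly.

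From this, the inequality $d\le\|(d_1,d_2)\|_p$ is easy: given $x=(x_1,x_2)$, $y=(y_1,y_2)$ and near-geodesic paths $\gamma_i$ from $x_i$ to $y_i$ in $X_i$, reparametrise so they are traversed simultaneously and concatenate cellwise to get a path $\gamma$ in $X$ with $\operatorname{length}_X(\gamma)=\|(\operatorname{length}_{X_1}(\gamma_1),\operatorname{length}_{X_2}(\gamma_2))\|_p$; taking infima over $\gamma_1,\gamma_2$ gives $d(x,y)\le\|(d_1(x_1,y_1),d_2(x_2,y_2))\|_p$. For the reverse inequality, take any path $\gamma$ in $X$ from $x$ to $y$; its coordinate projections $\gamma_1,\gamma_2$ are paths in $X_1,X_2$ from $x_i$ to $y_i$, and the cellwise identity above, together with monotonicity of $\|\cdot\|_p$ and the fact that restricting to subintervals only decreases each $\operatorname{length}_{X_i}(\gamma_i)$, gives $\operatorname{length}_X(\gamma)\ge\|(\operatorname{length}_{X_1}(\gamma_1),\operatorname{length}_{X_2}(\gamma_2))\|_p\ge\|(d_1(x_1,y_1),d_2(x_2,y_2))\|_p$; taking the infimum over $\gamma$ yields $d(x,y)\ge\|(d_1(x_1,y_1),d_2(x_2,y_2))\|_p$. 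Combining the two inequalities gives the claimed formula, i.e.\ $d^p=d_1^p+d_2^p$ for $p<\infty$ and $d=\max(d_1,d_2)$ for $p=\infty$.

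The main obstacle, such as it is, is the bookkeeping in the lower bound: a path $\gamma$ in $X$ does not respect the cell structures of $X_1$ and $X_2$ simultaneously, so one must choose a common refinement of the two pullback subdivisions of $[0,1]$ (the subintervals on which $\gamma$ stays in a fixed cell of $X$, which already refines both factor subdivisions) and then apply on each piece the elementary inequality that $\|\cdot\|_p$ of a sum of vectors is at most the sum of the $\|\cdot\|_p$'s — wait, here one actually wants the opposite direction, namely that $\operatorname{length}$ of the whole is at least $\|\cdot\|_p$ of the total factor lengths, which follows from the \emph{triangle-type} inequality $\sum_i\|(a_i,b_i)\|_p\ge\|(\sum_i a_i,\sum_i b_i)\|_p$ (subadditivity of the norm). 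One also has to handle the $p=\infty$ case of this inequality separately, where it reads $\sum_i\max(a_i,b_i)\ge\max(\sum_i a_i,\sum_i b_i)$, which is immediate. No subtlety arises from infinite-dimensionality once the reduction to finitely many cells is in place, and no completeness or CAT(0) hypothesis is needed — only that each $X_i$ is connected so that $d_i$ is finite-valued.
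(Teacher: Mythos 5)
Your proof is correct and reaches the same conclusion, but the key inequality in the lower bound is established by a genuinely different device from the paper's. The paper first fixes a \emph{constant-speed} geodesic $\gamma$ in $X$, projects to $\gamma_1,\gamma_2$, and then applies Jensen's inequality to the convex map $s\mapsto s^p$ to get $\|\gamma_i\|^p=\bigl(\int\|\gamma_i'\|\bigr)^p\le\int\|\gamma_i'\|^p$; summing over $i$ and using constant speed gives $d_1^p+d_2^p\le\int\|\gamma'\|^p=d(x,y)^p$ (with a separate, simpler computation for $p=\infty$). You instead avoid the constant-speed normalisation entirely and use the triangle/Minkowski inequality for $\|\cdot\|_p$ on $\R^2$: subdivide $[0,1]$ into pieces lying in single cells, approximate lengths by Riemann-type sums $\sum_j d_C(\gamma(t_j),\gamma(t_{j+1}))$, and apply $\sum_j\|(a_j,b_j)\|_p\ge\|(\sum_j a_j,\sum_j b_j)\|_p$. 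Both routes are of comparable depth (indeed, your discrete subadditivity, in the limit, is precisely Minkowski's integral inequality, and the paper's Jensen argument is essentially equivalent once one has normalised), but yours has the small advantage of not presupposing the existence of an actual geodesic in the length space and of handling $p<\infty$ and $p=\infty$ uniformly, while the paper's is a slightly slicker computation once a constant-speed geodesic is in hand. One small point of imprecision worth fixing in your write-up: the ``cellwise identity'' $\operatorname{length}_X(\gamma)=\|(\operatorname{length}_{X_1}(\gamma_1),\operatorname{length}_{X_2}(\gamma_2))\|_p$ is \emph{not} an identity for an arbitrary path in a cell---it is only an inequality $\ge$ in general, with equality precisely when the two coordinate speeds are proportional (your ``compatible parametrisation''). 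You use the equality only in the upper bound, where you control the parametrisation, and the inequality in the lower bound, where you invoke subadditivity, so the argument is sound; just phrase the cellwise statement as the inequality to keep the logic clean.
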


\begin{proof}
Let $x=(x_1,x_2),y=(y_1,y_2)\in X$. For $i=1,2$, let $\gamma_i:[0,1] \ra X_i$ be a constant-speed geodesic from $x_i$ to $y_i$ in $X_i$ with constant speed, so that $\|\gamma'_i(t)\| = d(x_i,y_i)$ for all $t$. If $p<\infty$, then the path $\gamma = (\gamma_1,\gamma_2) : [0,1] \ra X$ satisfies
\[
\|\gamma'(t)\| = (d(x_1,y_1)^p+(d(x_2,y_2)^p)^{\f{1}{p}} \text{ for all }t\in[0,1],
\]
so $d(x,y)^p \leq (d(x_1,y_1)^p+d(x_2,y_2)^p)^{\f{1}{p}}$. Similarly, if $p=\infty$, then $\|\gamma'(t)\|=\max\{d(x_1,y_1),d(x_2,y_2)\}$ for all $t$, so $d(x,y)\le\max\{d(x_1,y_1),d(x_2,y_2)\}$.

Conversely, let $\gamma:[0,1] \ra X$ be a constant-speed geodesic from $x$ to $y$, so that $\|\gamma'(t)\| = d(x,y)$ for all $t$. For $i=1,2$, let $\gamma_i$ be the projection of $\gamma$ to $X_i$. We have $d(x_i,y_i)\le\|\gamma_i\|$. If $p < \infty$, then Jensen's inequality gives 
\[
\|\gamma_i\|^p \,=\, \left( \int_0^1 \|\gamma'_i(t)\| \,dt \right)^p 
    \,\leq\, \int_0^1 \|\gamma'_i(t)\|^p \,dt,
\]
and it follows that 
\[
d(x_1,y_1)^p+d(x_2,y_2)^p \,\leq\, \int_0^1 (\|\gamma'_1(t)\|^p + \|\gamma'_2(t)\|^p)\,dt 
    \,=\, \int_0^1 \|\gamma'(t)\|^p \,dt \,=\, d(x,y)^p.
\]
If $p=\infty$, then we have
\[
d(x_i,y_i) \,\leq\, \int_0^1 \|\gamma'_i(t)\| \,dt \,\leq\, \int_0^1 \|\gamma'(t)\| \,dt \,=\, d(x,y),
\]
and hence $\max(d(x_1,y_1),d(x_2,y_2)) \leq d(x,y)$. This completes the proof.
\end{proof}

When considering geodesics in a product, we can reduce to geodesics in the factors, as shown by the following general lemma.

\begin{lem} \label{lem:geodesic_product}
Let $X_1$ and $X_2$ be geodesic metric spaces and let $p \in (1,\infty)$. Endow $X_1\times X_2$ with the $\ell^p$ metric $d\big((x_1,x_2),(y_1,y_2)\big)^p = d(x_1,y_1)^p + d(x_2,y_2)^p$. A path $\gamma\colon[0,1] \to X_1\times X_2$ is a constant-speed geodesic from $x=(x_1,x_2)$ to $y=(y_1,y_2)$ if and only if its projections $\gamma_i=\pi_{X_i}\gamma$ are constant-speed geodesics.
\end{lem}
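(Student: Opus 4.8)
The plan is to prove the two implications separately. Throughout I take $\gamma(0)=x$ and $\gamma(1)=y$, as is implicit in the statement, so that the projections automatically satisfy $\gamma_i(0)=x_i$ and $\gamma_i(1)=y_i$. The implication ``projections geodesic $\Rightarrow$ $\gamma$ geodesic'' will be elementary; the converse is the substantive direction and will rest on the strict convexity of the $\ell^p$ norm for $p\in(1,\infty)$.

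For the easy direction, assume $\gamma_1,\gamma_2$ are constant-speed geodesics, necessarily from $x_i$ to $y_i$, so $d(\gamma_i(s),\gamma_i(t))=|s-t|\,d(x_i,y_i)$ for all $s,t$. By the definition of the product metric,
\[
d(\gamma(s),\gamma(t))^p=d(\gamma_1(s),\gamma_1(t))^p+d(\gamma_2(s),\gamma_2(t))^p=|s-t|^p\bigl(d(x_1,y_1)^p+d(x_2,y_2)^p\bigr)=|s-t|^p\,d(x,y)^p,
\]
so $\gamma$ is a constant-speed geodesic from $x$ to $y$. (This direction works for every $p\in[1,\infty]$.)

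For the converse, the crux will be the following claim, which I would isolate first: \emph{if $\eta\colon[0,1]\to X_1\times X_2$ is any constant-speed geodesic, then for every $r\in[0,1]$ and $i\in\{1,2\}$ one has $d(\eta_i(0),\eta_i(r))=r\,d(\eta_i(0),\eta_i(1))$ and $d(\eta_i(r),\eta_i(1))=(1-r)\,d(\eta_i(0),\eta_i(1))$.} To prove it I would assume $\eta(0)\ne\eta(1)$ and $r\in(0,1)$ (the other cases being trivial), set $a=\bigl(d(\eta_1(0),\eta_1(1)),\,d(\eta_2(0),\eta_2(1))\bigr)$, and consider the vectors $F=\bigl(d(\eta_1(0),\eta_1(r)),\,d(\eta_2(0),\eta_2(r))\bigr)$ and $G=\bigl(d(\eta_1(r),\eta_1(1)),\,d(\eta_2(r),\eta_2(1))\bigr)$ in $(\R^2,\ell^p)$. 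The definition of the product metric, together with $\eta$ being a constant-speed geodesic, gives $\|F\|_p=d(\eta(0),\eta(r))=r\|a\|_p$ and $\|G\|_p=(1-r)\|a\|_p$. The triangle inequality in each factor $X_i$ gives $F+G\ge a$ coordinatewise, hence $\|F+G\|_p\ge\|a\|_p$, while the triangle inequality in $(\R^2,\ell^p)$ gives $\|F+G\|_p\le\|F\|_p+\|G\|_p=\|a\|_p$. Thus equality holds in $\|F+G\|_p\le\|F\|_p+\|G\|_p$; since $(\R^2,\ell^p)$ is strictly convex for $p\in(1,\infty)$, the nonzero vectors $F,G$ must be positively proportional, and comparing norms forces $G=\tfrac{1-r}{r}F$, so $F+G=\tfrac1rF$ and $\|F+G\|_p=\tfrac1r\|F\|_p=\|a\|_p$. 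Since $F+G\ge a$ coordinatewise, all entries are nonnegative, and $t\mapsto t^p$ is strictly increasing on $[0,\infty)$, the equality $\|F+G\|_p=\|a\|_p$ forces $F+G=a$, whence $F=ra$ and $G=(1-r)a$, proving the claim.

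Granting the claim, I would finish by applying it first to $\gamma$ itself, obtaining $d(x_i,\gamma_i(r))=r\,d(x_i,y_i)$ and $d(\gamma_i(r),y_i)=(1-r)\,d(x_i,y_i)$ for all $r$, and then bootstrapping to the full constant-speed condition: given $0\le s<t\le1$ with $s<1$ (the case $s=1$ being trivial), apply the claim to the reparametrised restriction $\eta(r)=\gamma\bigl(s+r(1-s)\bigr)$, a constant-speed geodesic from $\gamma(s)$ to $y$; since $d(\eta_i(0),\eta_i(1))=d(\gamma_i(s),y_i)=(1-s)\,d(x_i,y_i)$, the claim gives $d\bigl(\gamma_i(s),\gamma_i(s+r(1-s))\bigr)=r(1-s)\,d(x_i,y_i)$, and $r=(t-s)/(1-s)$ yields $d(\gamma_i(s),\gamma_i(t))=(t-s)\,d(x_i,y_i)$, so $\gamma_i$ is a constant-speed geodesic from $x_i$ to $y_i$. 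The main obstacle is precisely the strict-convexity step in the claim — it is strict convexity of $\ell^p$ together with strict monotonicity of $t\mapsto t^p$, both of which fail at $p=1$ and $p=\infty$, that rigidify $F$ and $G$ into multiples of $a$ — and secondarily the need for the bootstrap, since one application of the claim controls only distances of $\gamma_i(r)$ from the endpoints, not distances between interior points.
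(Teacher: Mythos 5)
Your proof is correct, but it takes a genuinely different route from the paper. The paper works with the metric speed $\|\gamma'(t)\|$ of the path and its projections: it observes that $\|\gamma'(t)\|^p = \|\gamma_1'(t)\|^p + \|\gamma_2'(t)\|^p = d(x,y)^p$ almost everywhere, and then runs a chain of estimates through Jensen's inequality $\bigl(\int f\bigr)^p \le \int f^p$ applied to $\|\gamma_1'\|$ and $\|\gamma_2'\|$; equality forces the speeds to be constant, hence the projections are geodesics. Your argument avoids metric derivatives entirely: you package the projected endpoint-distances into vectors $F,G,a$ in $(\R^2,\ell^p)$, use the triangle inequality in the factors to get $F+G\ge a$ coordinatewise, use the product formula and the geodesic property to get $\|F\|_p+\|G\|_p=\|a\|_p$, and then invoke strict convexity of $\ell^p$ plus strict monotonicity of $t\mapsto t^p$ to rigidify $F=ra$, $G=(1-r)a$. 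Since that only controls distances to the two endpoints, you add a reparametrisation bootstrap to promote it to the full constant-speed statement $d(\gamma_i(s),\gamma_i(t))=(t-s)\,d(x_i,y_i)$; this second step is genuinely needed, as you correctly note (the endpoint identities alone give only a one-sided bound on interior distances). What each approach buys: the paper's is shorter once one accepts the standard framework of metric derivatives and the a.e. differentiability of Lipschitz curves, and it makes transparent exactly where $p\in(1,\infty)$ enters (strict convexity of $t\mapsto t^p$); yours is more elementary, working purely at the level of the distance function with no calculus, at the cost of the extra bootstrap step and a slightly more involved rigidity argument. Both correctly identify strict convexity at $p\in(1,\infty)$ as the crux.
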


\begin{proof}
If the $\gamma_i$ are constant-speed geodesics, then $\gamma$ is constant-speed. Moreover, $\|\gamma'(t)\|^p=\|\gamma'_1(t)\|^p+\|\gamma'_2(t)\|^p=d(x,y)^p$ for all $t$, which shows that $\|\gamma\|=d(x,y)$, so $\gamma$ is a geodesic.

For the converse, suppose that $\gamma$ is a constant-speed geodesic from $x=(x_1,x_2)$ to $y=(y_1,y_2)$, so that $\|\gamma'(t)\|^p=d(x,y)^p=d(x_1,y_1)^p+d(x_2,y_2)^p$. We know that $\int_0^1\|\gamma_1'(t)\|dt=d(x_1,y_1)$ and $\int_0^1\|\gamma_2'(t)\|=d(x_2,y_2)$. Applying Jensen's inequality, we compute
\begin{align*}
d(x_1,y_1)^p \,&=\, \left(\int_0^1\|\gamma_1'(t)\|dt\right)^p \\
    &\ge\,  \int_0^1\|\gamma_1'(t)\|^pdt \\
    &=\,    \int_0^1\left(d(x,y)^p-\|\gamma_2'(t)\|^p\right)dt \\
    &\ge\,  d(x,y)^p-\left(\int_0^1\|\gamma_2'(t)\|dt\right)^p \,=\,d(x_1,y_1)^p.
\end{align*}
Both the intervening inequalities must be equalities, which implies that the $\|\gamma_i'(t)\|$ are constant. Since $d(x,y)^p=\int_0^1\|\gamma_1'(t)\|^p+\|\gamma_2'(t)\|^pdt$, the $\gamma_i$ must be geodesics.
\end{proof}

\subsection{The zero-tension condition} \label{subsec:zero_tension}

We now specialise to the setting of CAT(0) cube complexes equipped with the $\ell^p$-metric, for some $p \in (1,\infty)$. As we are working locally, our next two lemmas consider small CAT(0) cube complexes.



\begin{lem} \label{lem:unique_geodesic_3cubes}
Let $X$ be a CAT(0) cube complex consisting of three cubes. Between any two points of $X$, there exists exactly one local $\ell^p$-geodesic.
\end{lem}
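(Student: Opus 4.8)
The plan is to reduce to two explicit three-cube configurations and handle each directly. Existence is immediate: a three-cube complex is finite-dimensional, so Lemma~\ref{lem:geodesic} already supplies a global---hence local---$\ell^p$-geodesic between any two points, and the content is uniqueness. The first move is to peel off product decompositions: if $X=X_1\times X_2$ nontrivially, then by the local form of Lemma~\ref{lem:geodesic_product} (a neighbourhood in a product is a product of neighbourhoods, and local geodesicity is a local condition) a path is a local $\ell^p$-geodesic in $X$ exactly when both of its projections are; each factor is a CAT(0) cube complex of strictly smaller dimension, so iterating reaches trees and single cubes, in which local $\ell^p$-geodesics are unique---for trees because local geodesics are geodesics, for a single cube by strict convexity of $\|\cdot\|_p$, $p\in(1,\infty)$. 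So we may assume $X$ is indecomposable. A short combinatorial analysis then applies: connectedness forces, after relabelling, $C_2$ to meet $C_1$ and $C_3$ along faces $F_{12}$, $F_{23}$, and, writing $C_2$ as a product adapted to these faces and using flagness of vertex links, either $X$ is a tripod of three edges (a tree, and we are done) or it is the ``corner'' configuration $C_1=A_1\times A_2$, $C_2=B_1\times A_2$, $C_3=B_1\times B_2$ with $F_{12}$ lying in the $A_2$-factor and $F_{23}$ in the $B_1$-factor of $C_2$---so the gluing faces are transverse in $C_2$ and $C_1\cap C_3$ is a single vertex $v\in F_{12}\cap F_{23}$---the remaining ``parallel'' possibility making $X$ a flat union of cubes, isometric with the $\ell^p$-metric to a convex subset of an $\ell^p$-normed space, where uniqueness is again strict convexity.

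It remains to treat the corner case. The key observation is that a local $\ell^p$-geodesic crosses each of $F_{12}$, $F_{23}$ at most once: near an interior point of $F_{12}$ the complex is locally the product of $A_2$ with two cubes glued at a vertex, so the projection of a local geodesic to that second factor is itself a local geodesic, and such a projection can neither return through the cut vertex nor reach a corner of a cube and continue; likewise at $F_{23}$. Consequently a local geodesic from $x$ to $y$ is a concatenation of at most three affine segments passing through $C_1$, $C_2$, $C_3$ in order (each cube being flat), and is determined by its crossing points $(q_1,q_2)\in F_{12}\times F_{23}$ (with $q_1=q_2=v$ giving the ``through the centre'' geodesic). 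The total length $L(q_1,q_2)=\|x-q_1\|_p+\|q_1-q_2\|_p+\|q_2-y\|_p$ is convex on the convex set $F_{12}\times F_{23}$, and strictly convex unless $x$, $y$ sit in a degenerate collinear position (a case one disposes of by hand); moreover each local geodesic corresponds to a critical point of $L$, the first-order conditions at $q_1$ and $q_2$ being exactly the zero-tension and no-shortcut conditions of Theorem~\ref{thm:description_local_geodesics} read off from the product structure $C_2=B_1\times A_2$. Convexity then produces a unique critical point, hence a unique local geodesic, which coincides with the global one; if $x$ and $y$ already lie in a common pair of cubes, one argues directly in that flat (or two-cube product) subcomplex.

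The step I expect to be the real obstacle is establishing, in the corner case, the equivalence between ``$[x,q_1]\cup[q_1,q_2]\cup[q_2,y]$ is locally geodesic at $q_1$ and at $q_2$'' and ``$(q_1,q_2)$ is a critical point of $L$'': one must check that the transverse cut-vertex structure confines the admissible perturbations to sliding $q_1$ within $F_{12}$ and $q_2$ within $F_{23}$, and then convert the resulting first-order conditions into the zero-tension and no-shortcut statements, keeping careful track of which product factor each of the vectors $x-q_1$, $q_2-q_1$, $y-q_2$ occupies. This is precisely the local break-point analysis that Theorem~\ref{thm:description_local_geodesics} will later globalise; verifying the strict convexity of $L$ and disposing of the degenerate collinear case is the other spot requiring a little care, as is justifying the short classification of indecomposable three-cube CAT(0) cube complexes used above.
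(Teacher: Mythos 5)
Your core idea---reducing a local geodesic in a three-cube complex to a pair of crossing points and minimizing a convex length functional over those crossing points---is exactly the mechanism the paper uses, and it does work. But the preparatory apparatus you wrap around it introduces a genuine error. The paper does not decompose products or classify indecomposable three-cube configurations; it simply observes that in any CAT(0) cube complex with three cubes one can relabel them $C$, $C'$, $D$ so that $C\cap C'\subset D$ (taking $D$ to be any cube containing $C\cap C'$, including the case $C\cap C'=\varnothing$), after which a local geodesic from $x\in C$ to $y\in C'$ must have break points $p_1\in C\cap D$ and $p_2\in C'\cap D$ and travel affinely in $D$ between them. This makes $(p_1,p_2)\mapsto d(x,p_1)+d(p_1,p_2)+d(p_2,y)$ a strictly convex functional whose unique critical point gives the unique local geodesic, with no case analysis at all.

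The specific gap in your proposal is the claimed trichotomy for indecomposable three-cube complexes. It is incomplete. Take $C_2=[0,1]^2$ with two edges $C_1$ and $C_3$ attached at the diagonally opposite vertices $(0,0)$ and $(1,1)$ respectively. This complex is CAT(0), is not a nontrivial product (the extra edges cannot be written as products of cubes from two nontrivial factors without forcing additional cubes to exist), and yet it is neither a tripod, nor your ``corner'' configuration (here $C_1\cap C_3=\varnothing$, whereas your corner requires $C_1\cap C_3=\{v\}$ and a decomposition $C_2=B_1\times A_2$ with $F_{12}$ in one factor and $F_{23}$ in the other---but here both gluing faces are vertices with all coordinates of $C_2$ constrained), nor a flat parallel union isometric to a convex subset of $(\R^n,\ell^p)$. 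So the case analysis on which your argument rests simply does not terminate. Ironically, the missed configuration is the easy one---both break points are pinned to single vertices, so uniqueness is immediate---but the proof as written does not reach it. Replacing the classification by the paper's relabeling observation, and then running your convex-optimization step over $(C\cap D)\times(C'\cap D)$, repairs the argument and in fact collapses it to a few lines.
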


\begin{proof}
Since $X$ is a CAT(0) cube complex, we can label the three cubes $C$, $C'$, and $D$ so that $C \cap C' \subset D$. Note that any local $\ell^p$-geodesic in $X$ restricts to an affine path in each cube. 

Given $x \in C$ and $y \in C'$, any local $\ell^p$-geodesic $\gamma$ from $x$ to $y$ starts with an affine path to some $x_1 \in C \cap D$, and ends with an affine path from some $x_2 \in D \cap C'$. Furthermore, between $x_1$ and $x_2$ the path $\gamma$ does not intersect the interior of $C$ or $C'$; as a consequence, it is the affine path between $x_1$ and $x_2$ in $D$.

As a sum of strictly convex functions, the function
\begin{align*} 
(C \cap D) \times (C' \times D) &\ra \R \\
    (p_1,p_2) &\mapsto d(x,p_1)+d(p_1,p_2)+d(p_2,y)
\end{align*}
is strictly convex, so it has a unique local minimum. Hence there exists a unique local $\ell^p$-geodesic from $x$ to $y$. 

A similar but simpler argument applies if at least one of $x$ and $y$ lies in $D$.
\end{proof}

The next statement requires some notational clarifications. If $a$ and $b$ are points in a cube $C$ of dimension $d$, then by identifying $C$ with $[0,1]^d$, we can view $a$ and $b$ as vectors in $\R^d$. This allows us to perform vector operations on $a$ and $b$, and to write expressions such as $a-b$. It is also natural to write $\|a-b\|$ for the distance in $C$ from $a$ to $b$. 

Now suppose that $X=C\times Y$ is a decomposition of a CAT(0) cube complex, where $C$ is a single cube of dimension $d$. Given two points in $X$, we can consider their projections to $C$, and then consider those as vectors in $\R^d$. To simplify notation, we write expressions such as $\frac{(a-b)_C}{\|a-b\|_C}$ in place of $\frac{\pi_C(a)-\pi_C(b)}{d(\pi_C(a),\pi_C(b))}$, and so forth.

\begin{lem} \label{lem:zero_tension}
Let $C,C',D$ be cubes in a CAT(0) cube complex with $C\cap C'=D$. Let $x \in C\smallsetminus D$, $y \in C'\smallsetminus D$, and $z \in D$. The piecewise affine path from $x$ to $y$ via $z$ is an $\ell^p$-geodesic in $C \cup C'$ if and only if it satisfies the ``zero-tension condition'':
\[
\f{(x-z)_D}{d(x,z)} + \f{(y-z)_D}{d(y,z)}=0.
\]
\end{lem}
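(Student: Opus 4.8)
The plan is to reduce the statement to a calculus exercise about a single convex function, using the fact that the path is affine on each cube and that the constraint ``$z\in D$'' is what produces a nontrivial first-order condition. First I would fix notation: write $D = C\cap C'$, and, using that $C,C'$ are cubes sharing the subcube $D$ in a CAT(0) cube complex, decompose $C = D\times A$ and $C' = D\times B$ for cubes $A,B$, so that $C\cup C'$ embeds in the product $D\times A\times B$ (this is a standard consequence of the CAT(0) condition on the cube complex, and it lets me use Lemma~\ref{lem:lp_product}). The piecewise affine path from $x$ to $y$ via a point $z\in D$ is then determined by $z$, and its $\ell^p$-length is
\[
L(z) \,=\, d(x,z) + d(z,y).
\]
The path is an $\ell^p$-geodesic in $C\cup C'$ precisely when $z$ minimises $L$ over $D$ — here I would invoke Lemma~\ref{lem:unique_geodesic_3cubes} (applied to the three cubes $C$, $C'$, and $D$, noting $C\cap C'\subseteq D$) to know that a geodesic exists and is the unique local geodesic, so that ``geodesic'' is equivalent to ``$z$ is the global, hence the unique critical, minimiser of $L$ on $D$''. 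Since $x\notin D$ and $y\notin D$, both $d(x,\cdot)$ and $d(\cdot,y)$ are smooth (differentiable) at every point of $D$ — the $\ell^p$ norm for $p\in(1,\infty)$ is $C^1$ away from the origin, and $z\mapsto x$, $z\mapsto y$ never vanish on $D$ — so $L$ is differentiable on $D$.

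Next I would compute the gradient of $L$ restricted to $D$. Differentiating the $\ell^p$-distance: if $u = x - z$ viewed in the ambient coordinates, then $\nabla_z\, d(x,z)$, projected to the $D$-coordinates, equals $-\,(x-z)_D^{*}/\|x-z\|^{\,}$, where $(\cdot)^{*}$ denotes the appropriate derivative of the $\ell^p$ norm; the key point (and the only mildly delicate one) is that, because $D$ is a \emph{coordinate} subcomplex of the product $D\times A\times B$, the restriction to $D$ of the gradient of the $\ell^p$ norm on the product is exactly the gradient of the $\ell^p$ norm \emph{on $D$} evaluated at $(x-z)_D$ — the $A$- and $B$-coordinates of $x-z$ contribute to $\|x-z\|$ but their partials with respect to the $D$-variables vanish. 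Writing $\f{(x-z)_D}{d(x,z)}$ as shorthand for this normalised $D$-component (as the paper does in its notational preamble, and bearing in mind that for $p\ne 2$ the vector $\f{(x-z)_D}{d(x,z)}$ should be read via the $\ell^p$ duality map), the first-order condition $\nabla_{z|D}L = 0$ becomes exactly
\[
\f{(x-z)_D}{d(x,z)} + \f{(y-z)_D}{d(y,z)} \,=\, 0.
\]
Finally, since $L$ is strictly convex on $D$ (a sum of two compositions of strictly convex $\ell^p$ norms with affine maps, at least one of which is injective on $D$ because $x\notin D$, $y\notin D$), a critical point is automatically the unique minimiser, so the zero-tension condition is both necessary and sufficient for the path to be the $\ell^p$-geodesic.

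The main obstacle I anticipate is purely bookkeeping rather than conceptual: being careful about the meaning of the normalised vectors $\f{(x-z)_D}{d(x,z)}$ when $p\ne2$ (these live in the dual, via the $\ell^p$ gradient $t\mapsto |t|^{p-1}\operatorname{sgn}(t)/\|\cdot\|_p^{p-1}$), and verifying cleanly that restricting the ambient $\ell^p$-gradient to the coordinate subspace $D$ agrees with the intrinsic $\ell^p$-gradient on $D$. Once the product decomposition $C\cup C'\hookrightarrow D\times A\times B$ is in hand and Lemma~\ref{lem:lp_product} is invoked, this is a direct computation; I do not expect any genuine difficulty. I would also note explicitly that the ``only if'' direction needs the uniqueness from Lemma~\ref{lem:unique_geodesic_3cubes} (or strict convexity of $L$) to rule out the path being a geodesic while $z$ fails to be the minimiser of $L$.
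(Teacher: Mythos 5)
Your optimisation approach is a genuinely different route from the paper's proof. The paper decomposes $C = D \times C_0$ and $C' = D \times C'_0$, applies Lemmas~\ref{lem:lp_product} and~\ref{lem:geodesic_product} to reduce the claim to $\pi_D\gamma$ and $\pi_0\gamma$ each being constant-speed geodesics, and then reads off the zero-tension condition as the constant-speed constraint on $\pi_D\gamma$ --- no differentiation at all. Direct minimisation of $L(z)=d(x,z)+d(z,y)$ over $z\in D$ is perfectly natural here (indeed it is essentially the argument the paper uses in Lemma~\ref{lem:shortcut}), so the route is viable; but the step you flag as ``mildly delicate'' conceals a genuine gap, and the fix you propose for it is wrong.

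Setting $a=(x-z)_D$, $b=(y-z)_D$, $R=d(x,z)$, $S=d(y,z)$, the condition $\nabla_{z|D}L=0$ reads, coordinate by coordinate,
\[
\frac{|a_i|^{p-1}\sgn(a_i)}{R^{p-1}} \,+\, \frac{|b_i|^{p-1}\sgn(b_i)}{S^{p-1}} \,=\, 0,
\]
which is not the displayed linear equation $a_i/R + b_i/S = 0$. Declaring that $\frac{(x-z)_D}{d(x,z)}$ ``should be read via the $\ell^p$ duality map'' contradicts the paper's notational preamble, which is unambiguously literal: $\frac{(a-b)_C}{\|a-b\|_C}$ stands for $\frac{\pi_C(a)-\pi_C(b)}{d(\pi_C(a),\pi_C(b))}$, with no duality involved, and the lemma as stated uses that literal reading. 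The correct repair is a short verification that the gradient equation and the literal equation are equivalent: write $f(t)=|t|^{p-1}\sgn(t)$; since $R,S>0$ one has $|a_i|^{p-1}\sgn(a_i)/R^{p-1}=f(a_i/R)$, so the gradient equation is $f(a_i/R)=-f(b_i/S)=f(-b_i/S)$, and since $f$ is injective on $\mathbb{R}$ this forces $a_i/R=-b_i/S$. That one-line argument is missing from your proposal and is exactly what makes your first-order condition agree with the stated zero-tension condition.

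Two smaller points deserve a sentence each if you flesh this out. First, the ``only if'' direction needs that the minimiser of $L$ on $D$ satisfies $\nabla L=0$ rather than a KKT inequality; this holds because the $i$-th coordinate of any minimiser lies in the (possibly degenerate) interval between $(x_D)_i$ and $(y_D)_i$, so the constraint $z\in D$ is never active in a way that obstructs vanishing of the partial derivative. Second, ``the $\ell^p$ norm is strictly convex'' is imprecise (a norm is positively homogeneous, hence never a strictly convex function); what you actually want is that $z\mapsto\|x-z\|$ is strictly convex on $D$, which holds because $x-z_1$ and $x-z_2$ share the same nonzero $C_0$-component $x_0$ for all $z_1,z_2\in D$ (as $x\notin D$), hence are never positively proportional, so the strictly convex unit ball gives strict convexity along any segment in $D$. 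Strict convexity is not actually needed to prove the stated biconditional (convexity alone gives critical point $\Leftrightarrow$ minimiser, and minimiser $\Leftrightarrow$ geodesic since every path in $C\cup C'$ crosses $D$); it is needed only if you also want uniqueness, which is not part of this lemma.
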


\begin{figure}[ht]
\includegraphics[height=2.8cm]{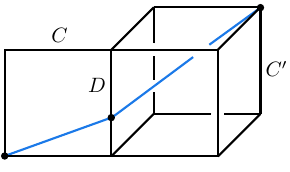}
\caption{The zero-tension condition: $\f{(x-z)_D}{d(x,z)} + \f{(y-z)_D}{d(y,z)}=0$} \label{fig:zero_tension}
\end{figure}

\begin{proof}
See Figure~\ref{fig:zero_tension}. Let $\gamma$ denote the union of the geodesic in $C$ from $x$ to $z$ with the geodesic in $C'$ from $z$ to $y$. Parametrise $\gamma$ so that it is constant-speed. Let us write $C=D \times C_0$ and $C'=D \times C'_0$, so that $C_0$ and $C'_0$ intersect in a vertex $v$. According to Lemma~\ref{lem:lp_product} and Lemma~\ref{lem:geodesic_product}, $\gamma$ is an $\ell^p$ geodesic in $C \cup C'=D \times (C_0 \cup C'_0)$ if and only if $\pi_D\gamma$ and $\pi_0\gamma$ are constant-speed $\ell^p$ geodesics in $D$ and $C_0 \cup C'_0$, respectively.

\mk

Suppose that $\gamma$ is an $\ell^p$-geodesic, so that $d(y,z)=d(x,y)-d(x,z)$. Because the unique $\ell^p$-geodesic in $D$ from $\pi_D(x)$ to $\pi_D(y)$ is affine, the point $z$ must lie along this affine path, so $\pi_D(x)$, $z$, and $\pi_D(y)$ are collinear. Also, $\pi_D\gamma$ is constant-speed, so $\frac{(z-x)_D}{\|z-x\|_D}=\frac{(y-z)_D}{\|y-z\|_D}$. Even more, the fact that $\pi_0\gamma$ is constant-speed as well means that there exists $\lambda$ such that $\|\gamma(t)-\gamma(t')\|_D=\lambda d(\gamma(t),\gamma(t'))$ for all $t,t'$. Rearranging gives the zero-tension condition.

\mk

Conversely, suppose that the zero-tension condition holds. Because there exist scalars $\lambda,\mu$ such that $\lambda(x-z)_D=\mu(y-z)_D$, the points $\pi_D(x)$, $z$, and $\pi_D(y)$ must be collinear, so $\pi_D\gamma$ is the geodesic from $\pi_D(x)$ to $\pi_D(y)$. Also, since $z$ is the unique point in $\gamma\cap D$, the projection $\pi_0\gamma$ is piecewise linear from $\pi_0(x)$ to $\pi_0(y)$ through $v$, and thus is the unique $\ell^p$-geodesic from $\pi_0(x)$ to $\pi_0(y)$ in $C_0 \cup C'_0$. 

As the projection of a concatenation of affine paths, $\pi_D\gamma$ can only fail to be locally constant-speed at the break-point $z$. Let $t$ be such that $\gamma(t)=z$. For $s<t$, the speed of $\pi_D\gamma$ is $\frac{d(\pi_D(x),z)}{d(x,z)}$. For $s>t$, the speed of $\pi_D\gamma$ is $\frac{d(z,\pi_D(y))}{d(z,y)}$. The zero-tension condition states that these agree, so $\pi_D\gamma$ is constant-speed. Since $d^p=d_D^p+d_0^p$, this forces $\pi_0\gamma$ to be constant-speed.
\end{proof}

\subsection{The no shortcut condition} \label{subsec:no_shortcut}

Whilst Lemma~\ref{lem:zero_tension} characterises local geodesics in $C\cup C'$, it does not give information about paths that avoid the intersection of $C$ and $C'$. Generically, geodesics from $C$ to $C'$ will avoid this intersection, as ``corner cubes between $C$ and $C'$'' will provide a shortcut.

Our next goal is to generalise Lemma~\ref{lem:zero_tension} to allow for these ``corner cubes''. This makes the situation more complex, and we need to add the ``no shortcut condition'' in addition, which can be thought of as dictating which corner cubes are used. From this we shall obtain an $\ell^p$ version of \cite[Thm~5.8]{ardilaowensullivant:geodesics}, before moving on to give a more explicit description of local $\ell^p$-geodesics. 

Keeping in mind Lemmas~\ref{lem:lp_product} and~\ref{lem:geodesic_product}, the next lemma takes place in a simplified setting. Note that the assumption that no subcube of $C$ contains $\{x,v\}$ is more general than assuming that $x$ lies in the interior of $C$, as it allows $x$ to be ``on the opposite side'' of $C$. This is necessary for being able to restrict to a subpath of a path that passes through many cubes.

\begin{lem} \label{lem:shortcut}
Let $C$ and $C'$ be cubes in a CAT(0) cube complex $X$ that intersect in a vertex $v$. Let $x\in C$ and $y\in C'$ be points such that no subcube of $C$ contains $\{x,v\}$ and no subcube of $C'$ contains $\{y,v\}$. Fix any decompositions $C=A_1 \times A_2$ and $C'=B_1 \times B_2$ such that $B_1 \times A_2\subset X$. The piecewise linear path $\gamma$ from $x$ to $y$ through $v$ is a local $\ell^p$-geodesic in $C \cup (B_1 \times A_2) \cup C'$ if and only if
\[ 
\f{\|x-v\|_{A_1}}{\|y-v\|_{B_1}} \geq \f{\|x-v\|_{A_2}}{\|y-v\|_{B_2}}.
\]
\end{lem}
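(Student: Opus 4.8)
The plan is to recognise $Z := C\cup(B_1\times A_2)\cup C'$ as an ``L--shaped'' subcomplex of a product of two smaller complexes, and then reduce the question to a two--dimensional obstacle problem. Set $a=\|x-v\|_{A_1}$, $a'=\|x-v\|_{A_2}$, $b=\|y-v\|_{B_1}$, $b'=\|y-v\|_{B_2}$; since no proper subcube of $C$ contains $\{x,v\}$ and none of $C'$ contains $\{y,v\}$, each of $a,a',b,b'$ is positive (we may assume all four factors are non--trivial cubes, the statement being immediate otherwise). Let $P=A_1\cup B_1$ and $Q=A_2\cup B_2$, regarded as subcomplexes of $X$: because $C\cap C'=\{v\}$ we have $A_1\cap B_1=\{v\}=A_2\cap B_2$, so each of $P,Q$ is a union of two cubes meeting in the single vertex $v$; in particular $P$ is uniquely geodesic, the geodesic between $p\in A_1$ and $p'\in B_1$ being $[p,v]\cup[v,p']$, of length $\|p-v\|_{A_1}+\|p'-v\|_{B_1}$, and likewise for $Q$. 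By Lemma~\ref{lem:lp_product} the product complex $P\times Q$ carries the $\ell^p$ metric with $d^p=d_P^p+d_Q^p$, and one checks directly that $Z$ is canonically identified (with its length metric) with the subcomplex of $P\times Q$ consisting of those points $(z_P,z_Q)$ with $z_Q\in A_2$ or $z_P\in B_1$; its complement is essentially the cube $A_1\times B_2$, whose only reflex vertex is $v$.

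Now let $\delta$ be any path in $Z$ from $x$ to $y$ and let $\delta_P,\delta_Q$ be its projections to $P,Q$. Define $f(t)=+d_P(\delta_P(t),v)$ if $\delta_P(t)\in B_1$ and $f(t)=-d_P(\delta_P(t),v)$ otherwise, and define $g$ in the same way from $\delta_Q$ and $A_2$. Since $x\in C$ and $y\in C'$, the function $f$ runs from $-a$ to $b$ and $g$ runs from $-a'$ to $b'$. As $d_P(\,\cdot\,,v)$ is $1$--Lipschitz and $A_1\cap B_1=\{v\}$, the signed function $f$ is $1$--Lipschitz along $\delta_P$, so $|f'|\le\|\delta_P'\|$ and $|g'|\le\|\delta_Q'\|$ almost everywhere; combined with $\|\delta'\|^p=\|\delta_P'\|^p+\|\delta_Q'\|^p$ this gives
\[
\operatorname{length}_Z(\delta)\ \ge\ \int_0^1\bigl(|f'(t)|^p+|g'(t)|^p\bigr)^{1/p}\,dt ,
\]
i.e. the length of $\delta$ in $Z$ is at least the $\ell^p$--length in $\mathbb{R}^2$ of the planar path $t\mapsto(f(t),g(t))$. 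Moreover $\delta(t)\in Z$ holds for all $t$ exactly when this planar path avoids the open quadrant $U=\{(f,g):f<0,\ g>0\}$.

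It therefore suffices to minimise the $\ell^p$--length of a path in $\mathbb{R}^2$ from $(-a,-a')$ to $(b,b')$ avoiding $U$. The straight segment between these points meets $U$ precisely when $a/(a+b)>a'/(a'+b')$, i.e. when $ab'>a'b$; and a standard taut--string argument (using strict convexity of $\ell^p$--balls and the fact that $U$ has its single reflex corner at the origin) shows that the minimum equals $\|(a,a')\|_p+\|(b,b')\|_p$ when $ab'\ge a'b$ --- attained by the bent path through the origin --- and equals the strictly smaller $\|(a+b,a'+b')\|_p$ when $ab'<a'b$, attained by the straight segment. The path $\gamma=[x,v]\cup[v,y]$ has length $\|(a,a')\|_p+\|(b,b')\|_p$ and its $(f,g)$--track is exactly the bent path through the origin; so when $ab'\ge a'b$ the path $\gamma$ is a (global, in particular local) $\ell^p$--geodesic in $Z$. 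Conversely, when $ab'<a'b$ the path $\gamma_0=(\gamma_0^P,\gamma_0^Q)$, where $\gamma_0^P,\gamma_0^Q$ are the constant--speed geodesics of $P,Q$ from $x$ to $y$ (Lemma~\ref{lem:geodesic_product}), stays inside $Z$ --- precisely because $a/(a+b)\le a'/(a'+b')$ in this case --- and has length $\|(a+b,a'+b')\|_p<\operatorname{length}(\gamma)$. Since replacing $x,y$ by the points of $[x,v]$ and $[v,y]$ close to $v$ leaves the ratios $a/b$ and $a'/b'$ unchanged, this comparison persists in every neighbourhood of $v$; as $\gamma$ is trivially a local geodesic away from $v$, we conclude that $\gamma$ is a local $\ell^p$--geodesic in $Z$ if and only if $ab'\ge a'b$, which is the displayed inequality $\|x-v\|_{A_1}/\|y-v\|_{B_1}\ge\|x-v\|_{A_2}/\|y-v\|_{B_2}$.

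The step I expect to be the main obstacle is the honest reduction carried out in the second paragraph: checking that the signed distance functions $f,g$ really are $1$--Lipschitz along $\delta$ across the cut vertex $v$ (where they change sign), that membership of $\delta$ in $Z$ corresponds exactly to avoidance of the open quadrant $U$, and --- most importantly --- that these two facts together furnish a \emph{sharp} lower bound for $\operatorname{length}_Z(\delta)$, so that the planar obstacle problem genuinely computes distances in $Z$ rather than merely bounding them. The degenerate cases in which some $A_i$ or $B_i$ is a single vertex should be handled separately, since they trivialise the corresponding ratio (and may make a denominator vanish).
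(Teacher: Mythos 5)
Your proof is correct but takes a genuinely different route from the paper's. The paper proceeds by a first-variation computation: it normalises $d(x,v)=d(y,v)=1$, parametrises corner-cutting perturbations of $\gamma$ through nearby points $t\alpha a\in A_2$, $tb\in B_1$, differentiates the resulting length $f_\alpha(t)$ at $t=0$, minimises $f_\alpha'(0)$ over $a,b$ by Lagrange multipliers (the minimum is at the projections of $x,y$), and then optimises the one remaining parameter $\alpha$; the local geodesic condition $f_\alpha'(0)\ge0$ for all choices reduces to $\|x-v\|_{A_2}^p+\|y-v\|_{B_1}^p\le1$, which rewrites as the stated inequality. You instead isometrically embed the L-shaped region $Z$ into the $\ell^p$ product of the two bouquets $P=A_1\cup B_1$ and $Q=A_2\cup B_2$, collapse each bouquet by a signed distance to $v$, and reduce to a planar obstacle problem. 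This is more conceptual, avoids the normalisation, and proves the stronger statement that $\gamma$ is a \emph{global} geodesic in $Z$ when $ab'\ge a'b$; as a byproduct it gives the closed-form distance $d_Z(x,y)=\min\bigl(\|(a,a')\|_p+\|(b,b')\|_p,\ \|(a+b,a'+b')\|_p\bigr)$, a two-factor special case of Lemma~\ref{lem:distance_formula}. The two steps you flag as potential gaps are sound. The signed distances are $1$-Lipschitz across the cut point because a geodesic in the bouquet $P$ between a point of $A_1$ and a point of $B_1$ necessarily passes through $v$, so the signed distance is even an isometry along such geodesics. And the ``taut string'' bound admits a short rigorous proof: set $T=\sup\{t:g(t)\le0\}$, observe that avoidance of $U$ forces $f(T)\ge0$, and bound
\[
\operatorname{length}(\delta)\ \ge\ \bigl\|(f(T)+a,\,a')\bigr\|_p+\bigl\|(b-f(T),\,b')\bigr\|_p,
\]
a convex function of $f(T)\in[0,\infty)$ whose derivative at $0$ is nonnegative exactly when $ab'\ge a'b$, giving the lower bound $\|(a,a')\|_p+\|(b,b')\|_p$; when $ab'<a'b$ the straight planar segment avoids $U$ and lifts to $\gamma_0$, which is strictly shorter by the strict Minkowski inequality. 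Finally, the degenerate cases where some $A_i$ or $B_i$ is a single vertex are harmless, since $Z$ then collapses to $C\cup C'$ and the relevant ratio is $0$ or $\infty$; the paper handles this by implicitly assuming $A_2,B_1$ are proper factors.
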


\begin{figure}[ht]
\centering
\begin{tikzpicture}
\def \p {0.05}
\def \op {0.3}
\def \gris {blue}
\draw[fill] (-2,1.5) circle (\p) node(x) {};
\draw[fill,blue] (-2,0) circle (\p) node(x1) {};
\draw[fill,blue] (0,1.5) circle (\p) node(x2) {};
\draw[fill] (0,0) circle (\p) node(v) {};
\draw[fill] (1,-2.5) circle (\p) node(y) {};
\draw[fill,blue] (1,0) circle (\p) node(y1) {};
\draw[fill,blue] (0,-2.5) circle (\p) node(y2) {};

\draw (-3,0) -- (3,0) -- (3,3) -- (-3,3) -- (-3,0);
\draw (0,0) -- (0,3);
\draw (0,0) -- (0,-3) -- (3,-3) -- (3,0);
\draw (x.center) -- (v.center) -- (y.center);
\draw[blue,dashed] (x2.center) -- (x.center) -- (x1.center);
\draw[blue,dashed] (y2.center) -- (y.center) -- (y1.center);

\node at ([xshift=-0.5cm]x) {$x$};
\node at ([yshift=-0.5cm]x1) {\color{blue} $x_{A_1}$};
\node at ([xshift=0.5cm]x2) {\color{blue} $x_{A_2}$};
\node at ([xshift=0.5cm]y) {$y$};
\node at ([yshift=0.5cm]y1) {\color{blue} $y_{B_1}$};
\node at ([xshift=-0.5cm]y2) {\color{blue} $y_{B_2}$};
\node at ([xshift=-0.3cm,yshift=-0.3cm]v) {$v$};

\node at (-2.7,2.7) {$C$};
\node at (2.7,-2.7) {$C'$};
\node at (2.2,2.7) {$B_1 \times A_2$};
\node at (-1.5,3.3) {$A_1$};
\node at (1.5,-3.3) {$B_1$};
\node at (-3.5,1.5) {$A_2$};
\node at (3.5,-1.5) {$B_2$};

\end{tikzpicture}
\caption{The no shortcut condition: $\f{\|x-v\|_{A_1}}{\|y-v\|_{B_1}} \geq \f{\|x-v\|_{A_2}}{\|y-v\|_{B_2}}$}
\label{fig:no_shortcut}
\end{figure}
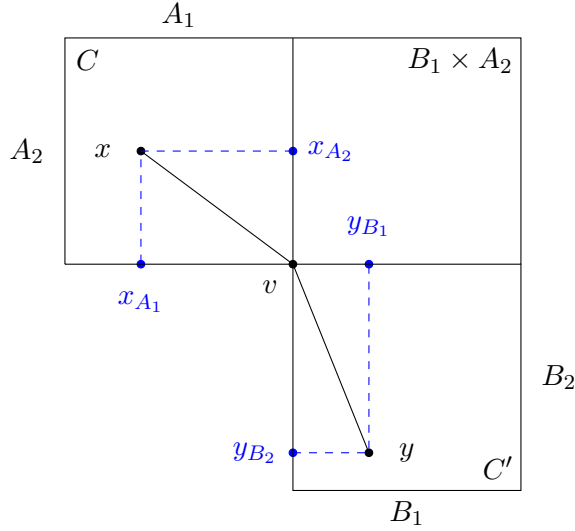

\begin{proof}
See Figure~\ref{fig:no_shortcut}. If $z\in C \cup (B_1 \times A_2) \cup C'$ and $t$ is a constant, then we write $tz$ to mean the point along the affine geodesic $[v,z]$ from $v$ to $z$ with $d(v,tz)=t$. With $v$ thus set as the basepoint, we shall simplify notation by writing $\|z\|=\|z-v\|$ where it will not cause confusion.

Let $s=\min\{d(x,v),d(y,v)\}$. The path $\gamma$ is locally geodesic if and only if its restriction from $sx$ to $sy$ is. Thus, perhaps after extending the affine segments $[v,x]$ and $[v,y]$ inside $C$ and $C'$, we may assume that $d(x,v)=d(y,v)=1$.

\mk

For points $a\in A_2$, $b\in B_1$ with $\|a\|=\|x-v\|_{A_2}$ and $\|b\|=\|y-v\|_{B_1}$, and for constants $t,\alpha\ge0$, consider the concatenation of the affine paths $[x,t\alpha a]$, $[t\alpha a,tb]$, $[tb,y]$. Let us write $f_\alpha(t)$ for the length of this path, which is given by 
\[
f_\alpha(t) \,=\, \|x-t\alpha a\| + t\|\alpha a-b\| + \|tb-y\|.
\]

When $t=0$, this concatenation degenerates to $\gamma$, so $\gamma$ is a local geodesic if and only if $f_\alpha'(0)\ge0$ for all choices of $\alpha$, $a$, and $b$. In order to compute $f_\alpha'$, let us write $C=\prod_{i=1}^nC_i$ and $C'=\prod_{i=1}^{n'}C'_i$, where the $C_i$ and $C'_i$ are unit intervals. We can then write $x=(x_1,\dots,x_n)$, $a=(a_1,\dots,a_n)$, etc. Note that $a_i=0$ if $C_i$ is not a factor of $A_2$, and $b_i=0$ if $C'_i$ is not a factor of $B_1$. By reordering, we may assume that $A_2=\prod_{i=1}^mC_i$ and $B_1=\prod_{i=1}^{m'}C'_i$. By definition, $\|x-t\alpha a\|=\left(\sum_{i=1}^n|x_i-t\alpha a_i|^p\right)^\f1p$, and so forth, and we therefore compute
\[
f_\alpha'(t) \,=\, \f{\sum_{i=1}^n -\alpha a_i|x_i-t\alpha a_i|^{p-1}\sgn(x_i-t\alpha a_i)}
        {\|x-t\alpha a\|^{p-1}}
    \,+\, \|\alpha a-b\|
    \,+\, \f{\sum_{i=1}^{n'} b_i|tb_i-y_i|^{p-1}\sgn(tb_i-y_i)}{\|tb-y\|^{p-1}}.
\]

Since $\{x,v\}$ is not contained in any subcube of $C$, every $x_i$ is positive, and similarly every $y_i$ is positive. Moreover, $A_2$ and $B_1$ are factors of a cube in $Y$, so $\alpha a$ and $b$ are orthogonal. We therefore get that 
\begin{align*}
f_\alpha'(0)   \,&=\, -\f{\sum_{i=1}^m \alpha a_ix_i^{p-1}}{d(x,v)^{p-1}} \,+\, \|\alpha a-b\|
            \,-\, \f{\sum_{i=1}^{m'} b_iy_i^{p-1}}{d(y,v)^{p-1}} \\
        &=\, -\alpha\sum_{i=1}^m a_ix_i^{p-1} \,+\, (\|\alpha a-v\|^p+\|b-v\|^p)^\f1p 
            \,-\, \sum_{i=1}^{m'} b_iy_i^{p-1} \\
        &=\, -\alpha\sum_{i=1}^m a_ix_i^{p-1} \,+\, (\alpha^p\|x-v\|_{A_2}^p + \|y-v\|_{B_1}^p)^\f1p 
            \,-\, \sum_{i=1}^{m'} b_iy_i^{p-1}.
\end{align*}
For any fixed $\alpha$, the Lagrange multiplier theorem shows that this value of $f_\alpha'(0)$ is minimised by taking $a$ and $b$ to be scalar multiples of $\pi_{A_2}(x)$ and $\pi_{B_1}(y)$, respectively. Since $\|a\|=\|x-v\|_{A_2}$ and $\|b\|=\|y-v\|_{B_1}$, that means that taking $a=\pi_{A_2}(x)$ and $b=\pi_{B_1}(y)$ minimises $f_\alpha'(0)$.

\mk

In this situation, we have that $f_\alpha'(0)= \|\alpha a-b\| - \alpha\|a\|^p - \|b\|^p$. We are reduced to dealing with the single parameter $\alpha$, so write $g(\alpha)=f_\alpha'(0)$. We have
\[
g'(\alpha) = \f{\alpha^{p-1}\|a\|^p}{\|\alpha a-b\|^{p-1}} -\|a\|^p,
\]
and so $g$ is extremised when 
\begin{align}
\alpha^p=\|\alpha a-b\|^p=\alpha^p\|a\|^p+\|b\|^p.  \label{eqn:alphaimplicit}
\end{align}
From this, we obtain
\begin{align}
\alpha^p = \f{\|b\|^p}{1-\|a\|^p}.    \label{eqn:alphaexplicit}
\end{align} 
Because all $x_i$ and $y_i$ are positive, the fact that $A_2$ and $B_1$ are proper factors of $C$ and $C'$, respectively, means that $\|x-v\|_{A_2},\|y-v\|_{B_1}\in(0,1)$. As $a=\pi_{A_2}(x)$ and $b=\pi_{B_1}(y)$, we see that the quantity in~\eqref{eqn:alphaexplicit} is positive and finite. It is straightforward to see that the unique positive value of $\alpha$ satisfying \eqref{eqn:alphaexplicit} is the minimum of $g:[0,\infty)\to\R$. At this value, we can use \eqref{eqn:alphaimplicit} and \eqref{eqn:alphaexplicit} to see that 
\begin{align*}
f_\alpha'(0) &= \|\alpha a-b\|-\alpha\|a\|^p-\|b\|^p \\
        &= \alpha - \alpha\|a\|^p - \|b\|^p \\
        &= \alpha -\alpha\|a\|^p - \alpha^p(1-\|a\|^p) \\
        &= (\|a\|^p-1)(\alpha^p-\alpha),
\end{align*}
which is nonnegative if and only if $\alpha\le1$.

\mk

We have shown that $\gamma$ is a local geodesic if and only if $\f{\|y-v\|_{B_1}^p}{1-\|x-v\|_{A_2}^p}\le1$, or equivalently $\|x-v\|_{A_2}^p+\|y-v\|_{B_1}^p\le1$. In view of the fact that $\|x-v\|^p=\|y-v\|^p=1$, this is itself equivalent to the conjunction of the statements $\|y-v\|_{B_1}^p\le\|x-v\|_{A_1}^p$ and $\|x-v\|_{A_2}^p\le\|y-v\|_{B_2}^p$, and this conjunction can be rewritten as
\begin{align*}
\f{\|x-v\|_{A_1}^p}{\|y-v\|_{B_1}^p}\ge1\ge\f{\|x-v\|_{A_2}^p}{\|y-v\|_{B_2}^p}.
\end{align*}
Returning to our original scaled copies of $x$ and $y$, this is equivalent to the inequality in the statement of the lemma.
\end{proof}

We now remove from Lemma~\ref{lem:shortcut} the assumption that the intersection of $C$ and $C'$ is a single vertex. We need to additionally incorporate the condition from Lemma~\ref{lem:zero_tension}.

\begin{lem} \label{lem:shortcut_cube}
Let $C$ and $C'$ be cubes in a CAT(0) cube complex $X$ such that $C\cap C'=D$ for some cube $D$. Let $x \in C$ and $y \in C'$ be points such that no subcube of $C$ contains $\{x,z\}$ for any $z\in D$, and no subcube of $C'$ contains $\{y,z\}$ for any $z\in D$. Fix any decompositions $C=D \times A_1 \times A_2$ and $C'=D \times B_1 \times B_2$ such that $D \times B_1 \times A_2$ belongs to $X$.
The piecewise linear path $\gamma$ from $x$ to $y$ through $z \in D$ is a local $\ell^p$-geodesic in $C \cup (D \times B_1 \times A_2) \cup C'$ if and only if
\[ 
\f{\|x-z\|_{A_1}}{\|y-z\|_{B_1}} \geq \f{\|x-z\|_{A_2}}{\|y-z\|_{B_2}} 
    \;\text{ and }\; \f{(x-z)_D}{d(x,z)} + \f{(y-z)_D}{d(y,z)} = 0.
\]
\end{lem}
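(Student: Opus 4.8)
The strategy is to reduce Lemma~\ref{lem:shortcut_cube} to the already-established Lemma~\ref{lem:shortcut} and Lemma~\ref{lem:zero_tension} by factoring out the common cube $D$. Write $C = D \times A_1 \times A_2$ and $C' = D \times B_1 \times B_2$, and set $C_0 = A_1 \times A_2$, $C_0' = B_1 \times B_2$; these two cubes intersect in a single vertex $v$, which we take as the origin of $C_0 \cup C_0'$. The hypothesis that $D \times B_1 \times A_2$ belongs to $X$ means exactly that $B_1 \times A_2 \subset C_0 \cup C_0'$ is a cube of $X/D$ (the link-type factor), so that Lemma~\ref{lem:shortcut} applies to $C_0$, $C_0'$, $B_1 \times A_2$. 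Thus the ambient complex for the argument is $W := D \times (C_0 \cup (B_1\times A_2) \cup C_0')$, which is $D \times (C_0 \cup (B_1 \times A_2) \cup C_0')$ and contains $C \cup (D \times B_1 \times A_2) \cup C'$ as asserted.

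The key step is to invoke Lemma~\ref{lem:geodesic_product} and Lemma~\ref{lem:lp_product}: a piecewise affine path $\gamma$ in $W = D \times (C_0 \cup (B_1\times A_2)\cup C_0')$ is a local $\ell^p$-geodesic if and only if its projection $\pi_D\gamma$ to $D$ and its projection $\pi_0\gamma$ to $C_0 \cup (B_1\times A_2) \cup C_0'$ are both constant-speed local $\ell^p$-geodesics. For the path $\gamma$ through $z \in D$ in the statement, $\pi_0\gamma$ is the piecewise affine path from $\pi_0(x)$ to $\pi_0(y)$ through $v$ (since $z$ projects to $v$ in the $C_0$-factor), so by Lemma~\ref{lem:shortcut} it is a local geodesic if and only if $\tfrac{\|x-z\|_{A_1}}{\|y-z\|_{B_1}} \ge \tfrac{\|x-z\|_{A_2}}{\|y-z\|_{B_2}}$; here I use that the $D$-coordinate contributes nothing to the $A_i$- and $B_i$-norms, so $\|x-z\|_{A_i} = \|\pi_0(x) - v\|_{A_i}$, etc. Meanwhile $\pi_D\gamma$ is the concatenation of two affine segments in $D$ meeting at $z$, and the zero-tension condition controls exactly when this is a constant-speed geodesic: collinearity of $\pi_D(x)$, $z$, $\pi_D(y)$ together with matching speeds. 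But one must be careful: $\gamma$ being a geodesic in $W$ requires not just that each projection be a local geodesic but that they have \emph{compatibly reparametrised} constant speeds — this is precisely the content of the zero-tension equation (as in the proof of Lemma~\ref{lem:zero_tension}), which says the speed of $\pi_D\gamma$ before and after $z$ agree. The hypothesis that no subcube of $C$ contains $\{x,z\}$ for any $z \in D$ guarantees that all coordinates of $\pi_0(x)$ in the $A_1 \times A_2$ factor are nonzero (and similarly for $y$), which is the nondegeneracy needed to apply Lemma~\ref{lem:shortcut}.

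Putting these together: $\gamma$ is a local $\ell^p$-geodesic in $C \cup (D \times B_1 \times A_2) \cup C'$ if and only if $\pi_0\gamma$ is a local geodesic and $\pi_D\gamma$ is a constant-speed geodesic with the speed normalisation matching that of $\pi_0\gamma$, which by Lemma~\ref{lem:shortcut} and (the proof of) Lemma~\ref{lem:zero_tension} happens exactly when both the no-shortcut inequality and the zero-tension equation hold. The main obstacle I anticipate is the bookkeeping around the ``shared speed'' issue: Lemma~\ref{lem:shortcut} is stated for a path with $d(x,v) = d(y,v) = 1$ after rescaling, and one needs to check that the rescaling used there is consistent with — indeed, forced by — the zero-tension condition in the $D$-direction, rather than being an independent normalisation. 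Once one observes that the zero-tension equation is exactly the statement that the $D$-speeds on the two sides of $z$ match (so that $\pi_D\gamma$ and $\pi_0\gamma$ can be simultaneously constant-speed), the two conditions combine cleanly and the proof concludes. A final remark: as in Lemma~\ref{lem:shortcut}, one may reduce to the case $d(x,z) = d(y,z)$ by extending the affine segments $[z,x] \subset C$ and $[z,y] \subset C'$, since local geodesicity is unaffected; this makes the application of Lemma~\ref{lem:shortcut} immediate.
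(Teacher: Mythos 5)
Your proof is correct and follows essentially the same route as the paper's: factor the ambient subcomplex as $D\times Y$ with $Y=(A_1\times A_2)\cup(B_1\times A_2)\cup(B_1\times B_2)$, apply Lemmas~\ref{lem:lp_product} and~\ref{lem:geodesic_product} to reduce to the two projections, use Lemma~\ref{lem:shortcut} for the $Y$-factor, and read off the zero-tension condition from requiring $\pi_D\gamma$ to be an affine, constant-speed geodesic. Your extra care around the ``shared speed'' subtlety is a legitimate point the paper glosses over, and it resolves exactly as you say: once $\gamma$ is parametrised by constant speed, constant speed of $\pi_D\gamma$ forces constant speed of $\pi_Y\gamma$ via the pointwise relation $\|\gamma'\|^p=\|(\pi_D\gamma)'\|^p+\|(\pi_Y\gamma)'\|^p$, so the two requirements decouple. (One small remark: $C\cup(D\times B_1\times A_2)\cup C'$ \emph{equals} $D\times Y$, not merely sits inside it, so no enlargement $W$ is needed; and the final reduction to $d(x,z)=d(y,z)$ is unnecessary since both sides of the no-shortcut inequality in Lemma~\ref{lem:shortcut} are degree-zero homogeneous in $x$ and in $y$ separately.)
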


\begin{proof}
The CAT(0) cube complex $C \cup (D\times B_1 \times A_2) \cup C'$ decomposes as a product $D \times Y$, where $Y=((A_1 \times A_2) \cup (B_1 \times A_2) \cup (B_1 \times B_2))$. According to Lemma~\ref{lem:lp_product} and Lemma~\ref{lem:geodesic_product}, $\gamma$ is a local $\ell^p$-geodesic if and only if its projections $\pi_D\gamma$ and $\pi_Y\gamma$ are local $\ell^p$-geodesics.

Lemma~\ref{lem:shortcut} states that $\pi_Y\gamma$ is a local $\ell^p$-geodesic if and only if $\f{\|x-z\|_{A_1}}{\|y-z\|_{B_1}} \geq \f{\|x-z\|_{A_2}}{\|y-z\|_{B_2}}$. The path $\pi_D\gamma$ is a local $\ell^p$-geodesic if and only if it is affine with constant speed. Since it is the projection of a concatenation of two affine paths, this is equivalent to the equality $\f{(z-x)_D}{d(x,z)} = \f{(y-z)_D}{d(y,z)}$.
\end{proof}

The information we can obtain from Lemmas~\ref{lem:shortcut} and~\ref{lem:shortcut_cube} is limited by the quality of the possible decompositions into two factors; in general, we need to decompose $C$ and $C'$ further. For example, consider two diagonally opposite cubes in $[0,2]^3$. Our next goal can be summed up as showing that there is always a (not-necessarily unique) ``best'' pair of decompositions to consider for a given local $\ell^p$-geodesic. First we need a simple lemma that controls the behaviour of local geodesics.

\begin{lem} \label{lem:hull}
No local $\ell^p$-geodesic $\gamma$ in a CAT(0) cube complex $X$ can cross a hyperplane twice. In particular, $\gamma$ is contained in the median hull of its endpoints.
\end{lem}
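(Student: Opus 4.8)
The plan is to prove the first assertion — that no local $\ell^p$-geodesic crosses a hyperplane twice — and then deduce the statement about the median hull as a formal consequence. Let $\gamma$ be a local $\ell^p$-geodesic in $X$, and suppose for contradiction that $\gamma$ crosses a hyperplane $\hat h$ at two parameter values $t_0 < t_1$, say at points $\gamma(t_0), \gamma(t_1)$ lying on $\hat h$. I would first argue that we may assume $\gamma$ lies in a finite union of cubes: indeed, $\gamma$ is piecewise affine, and the restriction $\gamma|_{[t_0,t_1]}$ passes through only finitely many cubes (being a compact piecewise-affine path), all of which lie in the carrier neighbourhood of $\hat h$ by convexity of hyperplane carriers, or more simply one just works inside the union of the finitely many cubes met by this subpath.

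The heart of the argument is a reflection trick. Let $\hat h$ separate the finite CAT(0) cube complex into two halfspaces $X^-$ and $X^+$, with $X^0$ the carrier (a product $\hat h \times [0,1]$ in the relevant range). Since $\gamma(t_0)$ and $\gamma(t_1)$ both lie on $\hat h$, consider the folding map $\rho$ that is the identity on the halfspace containing $\gamma|_{[t_0,t_1]}$ just after $t_0$ and reflects the other halfspace onto it across $\hat h$; because $X$ is a CAT(0) cube complex, this folding is $1$-Lipschitz for the $\ell^p$-metric (it is a cubical map sending cubes isometrically to cubes and collapsing the carrier direction appropriately — here I would invoke Lemma~\ref{lem:lp_product} to see that the reflection across $\hat h$ in any cube $\hat h$-direction is an $\ell^p$-isometry of that cube). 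Applying $\rho$ to $\gamma|_{[t_0,t_1]}$ produces a path of length at most $\ell(\gamma|_{[t_0,t_1]})$ with the same endpoints; strict convexity of the $\ell^p$-norm for $p\in(1,\infty)$, together with the fact that $\gamma$ genuinely leaves $\hat h$ between $t_0$ and $t_1$ (so the reflected path differs from $\gamma$ on a set of positive measure and is not merely a reparametrisation), forces the reflected path to be strictly shorter, or at least to admit a strict shortcut near the crossing points where the folded path has a corner that can be smoothed. Either way this contradicts $\gamma$ being locally length-minimising on $[t_0,t_1]$.

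For the second assertion: the median hull of the endpoints $x,y$ of $\gamma$ is the intersection of all combinatorial halfspaces containing both $x$ and $y$, equivalently the union of all cubes meeting no hyperplane that separates $x$ from $y$ "wrongly". If $\gamma$ left this hull, it would cross some hyperplane $\hat h$ with $x,y$ on the same side of $\hat h$; but then $\gamma$ starts on that side, crosses $\hat h$, and must cross $\hat h$ again to return to $y$ on the same side, contradicting the first part. Hence $\gamma$ stays in the median hull.

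The main obstacle I anticipate is making the reflection/folding argument rigorous for the $\ell^p$-metric rather than the CAT(0) metric: one must check carefully that the cubical folding across a hyperplane is $1$-Lipschitz for $\ell^p$ and that the inequality is \emph{strict} — the cleanest route is probably to localise near one of the two crossing points and exhibit an explicit shortcut through a single cube using strict convexity of $\|\cdot\|_p$, exactly as in the proof of Lemma~\ref{lem:unique_geodesic_3cubes}, rather than trying to control the global folded path all at once.
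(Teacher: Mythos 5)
The second half of your argument (deducing median-hull containment from no-double-crossing) is correct and standard. The trouble is in the first half, and it is a genuine gap rather than a matter of polish.

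Your folding map $\rho$ is $1$-Lipschitz, and on each closed halfspace it is in fact an isometry. Consequently $\rho\circ\gamma|_{[t_0,t_1]}$ has \emph{exactly} the same length as $\gamma|_{[t_0,t_1]}$ --- reflection does not produce strict contraction. So the only way to derive a contradiction is to notice that the folded path has a genuine corner and can be strictly shortcut by convexity. But that shortcut shows $d(\gamma(t_0),\gamma(t_1)) < \ell(\gamma|_{[t_0,t_1]})$, i.e. that $\gamma|_{[t_0,t_1]}$ is not a \emph{global} geodesic between the two crossing points. This is no contradiction: the hypothesis is only that $\gamma$ is a \emph{local} geodesic, and $t_1-t_0$ may be far larger than the scale on which local geodesicity guarantees minimality. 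Your fallback --- ``localise near one of the two crossing points'' --- does not rescue the argument either: near a single crossing, $\gamma$ crosses $\hat h$ only once, which is perfectly consistent with being locally geodesic (crossing a hyperplane once is generic), so there is no local obstruction at a single crossing to exploit. The reflection trick is the right argument for \emph{global} geodesics; it does not adapt mechanically to local ones.

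The paper's proof sidesteps this entirely by producing a \emph{local} obstruction. It isolates a subpath $\gamma'$ with endpoints $x,y$ on the same side of the offending hyperplane $h$, touching $h$ at some interior point $z$, and observes that the median hull of $\{x,y,z\}$ splits as a product $h'\times I$ with $I$ an interval, the $I$--factor being the direction dual to $h$. By Lemma~\ref{lem:geodesic_product} (applied on each small sub-interval on which the local geodesic is genuinely minimising), the projection of $\gamma'$ to $I$ must be locally affine, hence monotone. But the projection rises to the endpoint of $I$ at $z$ and then falls back --- not monotone --- giving a contradiction at the single parameter value where the projection attains its maximum. That is a pointwise, local contradiction, which is exactly what the local-geodesic hypothesis supplies. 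If you want to keep the spirit of your approach, the cleanest fix is to replace the global reflection by this local product-projection argument: take the parameter $t^*\in(t_0,t_1)$ where the $I$--coordinate of $\gamma$ is maximal, and apply Lemma~\ref{lem:geodesic_product} on a tiny neighbourhood of $t^*$.
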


\begin{proof}
If $\gamma$ crosses a hyperplane twice, then there is some subpath $\gamma'$ as follows. The endpoints $x$ and $y$ of $\gamma'$ are contained in cubes $C$ and $C'$, respectively, and there is a hyperplane $h$ outside the median hull of $\{x,y\}$ such that $h$ is dual to both $C$ and $C'$, and there exists some $z\in\gamma'\cap h$. The hull of $\{x,y,z\}$ decomposes as a product $h\times I$, where $I$ is an interval. The projection of $\gamma'$ to $I$ is not affine, contradicting Lemma~\ref{lem:geodesic_product}.
\end{proof}

Lemma~\ref{lem:hull} explains the fact that we do not need to make dimension restrictions, because the median hull of two points in a CAT(0) cube complex is always finite-dimensional. Indeed, the dimension of the median hull of two vertices is bounded by the combinatorial distance between them.

The next lemma provides decompositions of cubes that intersect in a vertex, based on a given pair of points. The resulting inequalities are related to the zero-tension condition. See Figure~\ref{fig:distance_formula} below for more of an idea of what these decompositions mean.

\begin{lem} \label{lem:outside_v}
Let $C$ and $C'$ be cubes in a CAT(0) cube complex $X$ that intersect in a vertex $v$. Let $x \in C$ and $y \in C'$ be such that no subcube of $C$ contains $\{x,v\}$ and no subcube of $C'$ contains $\{y,v\}$. Let $\gamma:[0,1]\to X$ be a local $\ell^p$-geodesic from $x$ to $y$. There exist decompositions $C=\prod_{j=1}^kA_j$ and $C'=\prod_{j=1}^kB_j$ such that 
\[ 
\f{\|x-v\|_{A_1}}{\|y-v\|_{B_1}} <  \f{\|x-v\|_{A_2}}{\|y-v\|_{B_2}} 
    < \dots <  \f{\|x-v\|_{A_k}}{\|y-v\|_{B_k}},
\]
and if we let $C_j=B_1 \times \dots \times B_j \times A_{j+1} \times \dots \times A_k\subset X$ for each $j$, then $\gamma$ is the piecewise affine path joining points $x_0=x,x_1,\dots,x_{k+1}=y$, where $x_j\in C_{j-1}\cap C_j$ for $j\in\{1,\dots,k\}$. 
\end{lem}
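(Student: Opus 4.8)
The plan is to use Lemma~\ref{lem:hull} to confine $\gamma$ to a finite-dimensional region in which only the hyperplanes dual to $C$ or to $C'$ are relevant, and then to read the decompositions off the combinatorial sequence of cubes that $\gamma$ meets, using the local analysis of Lemmas~\ref{lem:zero_tension}, \ref{lem:shortcut} and~\ref{lem:shortcut_cube} to govern the transitions and pin down the ordering of the ratios. Concretely, I first take $v$ to be the origin of both $C$ and $C'$, so that the hypotheses on $x$ and $y$ say exactly that $\pi_C(x)$ and $\pi_{C'}(y)$ have all coordinates nonzero --- equivalently, that $C$ and $C'$ are the minimal cubes containing $\{x,v\}$ and $\{y,v\}$. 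By Lemma~\ref{lem:hull}, $\gamma$ lies in the median hull $Z$ of $\{x,y\}$, which is finite-dimensional, and every hyperplane dual to a cube of $Z$ separates $x$ from $y$: indeed, if such a hyperplane $h$ were dual to neither $C$ nor $C'$, then $C$ and $C'$ would each lie on a single side of $h$, and since $v\in C\cap C'$ while vertices avoid hyperplanes, $x$, $y$ and $v$ would all lie on one side, contradicting that $h$ separates $x$ and $y$. Hence every hyperplane crossed by $\gamma$ lies in $\mathcal H_C\sqcup\mathcal H_{C'}$, a disjoint union since $C\cap C'=\{v\}$. Because hyperplanes dual to a common cube pairwise cross, it follows that each cube of $Z$ met by $\gamma$ decomposes canonically as a product whose two factors have hyperplane sets contained in $\mathcal H_C$ and in $\mathcal H_{C'}$ respectively --- a ``corner cube''.

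Next I would extract the decompositions. Write $\gamma$ as a concatenation of affine pieces with break points $x=x_0,x_1,\dots,x_{m+1}=y$, and for each $i$ let $D_i$ be the minimal cube containing $[x_i,x_{i+1}]$, so that $D_0\subseteq C$, the last piece ends at $y\in C'$, and consecutive cubes $D_{i-1}$ and $D_i$ intersect in a common face containing $x_i$. By the previous step each $D_i$ splits as a product of a sub-cube dual to a subset $S_i\subseteq\mathcal H_C$ and one dual to a subset $T_i\subseteq\mathcal H_{C'}$, and since $\gamma$ crosses no hyperplane twice this sequence is monotone: the active $\mathcal H_C$-directions only shrink and the active $\mathcal H_{C'}$-directions only grow as $i$ increases. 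Amalgamating this chain of partial data yields a single partition of $\mathcal H_C$, hence a factorisation $C=\prod_{j=1}^k A_j$, together with the matching $C'=\prod_{j=1}^k B_j$, for which the cubes traversed by $\gamma$ are precisely $C_0=C,C_1,\dots,C_k=C'$ with $C_j=B_1\times\cdots\times B_j\times A_{j+1}\times\cdots\times A_k\in X$ and $x_j\in C_{j-1}\cap C_j$ --- here some break points are allowed to coincide and the corresponding $A_j$ or $B_j$ to degenerate to a point, which is exactly what accommodates configurations such as $C$ and $C'$ spanning a full product inside $X$.

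For the strictly increasing ordering of the ratios, I would apply Lemmas~\ref{lem:shortcut} and~\ref{lem:shortcut_cube} around each break point $x_j$, where $\gamma$ passes directly from $C_{j-1}=D\times A_j$ to $C_j=D\times B_j$ across $D=C_{j-1}\cap C_j$. Since $\gamma$ is a genuine local geodesic it cannot take any of the shortcuts through corner cubes interpolating between $C_{j-1}$ and $C_j$, nor can it profitably reorder the blocks; feeding these facts into Lemma~\ref{lem:shortcut_cube} (with Lemma~\ref{lem:zero_tension} supplying the zero-tension part) and chaining the resulting relations along $\gamma$ forces $\f{\|x-v\|_{A_1}}{\|y-v\|_{B_1}}\le\cdots\le\f{\|x-v\|_{A_k}}{\|y-v\|_{B_k}}$. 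Strictness comes from choosing the decomposition as coarse as possible: an equality between consecutive ratios would let us merge $A_j$ with $A_{j+1}$ (and $B_j$ with $B_{j+1}$), making the intermediate $C_j$ disappear and $\gamma$ cross the merged corner cube affinely, contrary to coarseness.

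The step I expect to be the main obstacle is the amalgamation in the second paragraph: turning the local product structures of the $D_i$ into one globally coherent pair of decompositions, proving the monotonicity of the sequence precisely, and verifying that the amalgamated corner cubes $C_j$ genuinely belong to $X$ (this last point using that pairwise crossing hyperplanes span a cube in a CAT(0) cube complex). The degenerate situations --- colliding break points, point factors, and $\gamma$ passing through low-dimensional cells where many cubes meet --- will also require care, as will the chaining argument in the third paragraph; the available tools for all of this are Lemma~\ref{lem:hull} together with the median structure of $X$.
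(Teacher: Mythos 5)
Your overall blueprint is the same as the paper's: pass to a finite-dimensional median hull so that every relevant hyperplane is dual to $C$ or to $C'$, record the sequence of ``corner cubes'' $C_0=C,\dots,C_k=C'$ that $\gamma$ traverses, use Lemma~\ref{lem:hull} (no double crossing) to prove that the active $\mathcal H_C$-directions shrink while the $\mathcal H_{C'}$-directions grow, and then amalgamate this monotone chain into factorisations $C=\prod A_j$, $C'=\prod B_j$. This part is correct and follows the paper's construction of the $\mathcal I_j$, $\mathcal J_j$ and the definition $A_j=\prod_{i\in\mathcal I_j\smallsetminus\mathcal I_{j+1}}I_i$, $B_j=\prod_{i\in\mathcal J_j\smallsetminus\mathcal J_{j-1}}J_i$.

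Where there is a genuine gap is in how you obtain the chain of \emph{strict} inequalities. You propose to apply Lemmas~\ref{lem:shortcut} and~\ref{lem:shortcut_cube} ``around each break point $x_j$, where $\gamma$ passes directly from $C_{j-1}=D\times A_j$ to $C_j=D\times B_j$.'' But Lemma~\ref{lem:shortcut_cube} at a single transition constrains the balance between sub-factors \emph{of} $A_j$ versus sub-factors of $B_j$; it does not compare the $j$-th block to the $(j+1)$-th block, which is what the statement requires. The comparison between $A_j/B_j$ and $A_{j+1}/B_{j+1}$ genuinely needs a window of \emph{three} consecutive cubes $C_{j-1}\cup C_j\cup C_{j+1}$: the paper decomposes this union as $D_j\times\bigl((A_j\times A_{j+1})\cup(B_j\times A_{j+1})\cup(B_j\times B_{j+1})\bigr)$, invokes Lemma~\ref{lem:unique_geodesic_3cubes} to identify the projected segment of $\gamma$ as \emph{the} unique local geodesic there, observes it uses the corner cube and hence avoids the common vertex $v$, and then cites Lemma~\ref{lem:shortcut}: since the route through $v$ is \emph{not} a local geodesic, the $\geq$ in that lemma fails and the strict $<$ follows immediately. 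Your alternative ``coarseness'' argument for strictness does not work as stated: the decomposition in the lemma is determined by the actual cube sequence $\gamma$ traverses, so there is no freedom to ``choose it as coarse as possible,'' and an equality of ratios for a fixed combinatorial path does not by itself make an intermediate $C_j$ disappear. Even granting the local inequalities, you still need the propagation step --- that the ratios $\f{\|x_{i-1}-v\|_{A_i}}{\|x_{i+2}-v\|_{B_i}}$ between break points equal the corresponding ratios with $x$ and $y$ --- which the paper proves by projecting $\gamma$ onto the $A_i\times A_{i+1}$ and $B_i\times B_{i+1}$ factors and using Lemma~\ref{lem:geodesic_product}; your ``chaining'' gestures at this but leaves it unaddressed. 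So the existence of the decompositions and the monotonicity are essentially right, but the strictness of the ordering requires the three-cube-window argument rather than the two-cube one you sketch.
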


\begin{proof}
Let $C=\prod_{i=1}^nI_i$ and $C'=\prod_{i=1}^mJ_i$ be decompositions into products of unit intervals such that $\pi_{I_i}(v)=0$ and $\pi_{J_i}(v)=0$ for all $i$. According to Lemma~\ref{lem:hull}, we may assume that $X$ is equal to the median hull of $C\cup C'$. Consequently, every cube of $X$ can be written as a product $\prod_{i\in\mathcal I}I_i\times\prod_{i\in\mathcal J}J_i$ for some $\mathcal I\subset\{1,\dots,n\}$, $\mathcal J\subset\{1,\dots,m\}$. 

Let $x_0=x$, $t_0=0$, and $C_0=C$, so that $\gamma(t_0)=x_0\in C_0$. Given $x_j\ne y$ and $t_j<1$, if $\gamma|_{[t_j,1]}$ is contained in $C'$, then set $t_{j+1}=1$, $x_{j+1}=y$. Otherwise, there exists a minimal $t_{j+1}\in(t_j,1]$ such that $x_{j+1}=\gamma(t_{j+1})$ lies in the boundary of $C_j$. In this case, there is a cube $C_{j+1}$ such that $\gamma|_{(t_{j+1},t_{j+1}+\eps)}$ lies in the interior of $C_{j+1}$ for all sufficiently small $\eps$, because $\gamma$ is piecewise affine. Note that this choice of $C_{j+1}$ ensures that if $x_{k+1}=y$, then $C_k=C'$.

In this way, we obtain a sequence of minimal cubes $C_0=C,C_1,\dots,C_k=C'$ in $X$ such that $\gamma$ is contained in their union, and $x_j\in C_{j-1}\cap C_j$ for all $j\in\{1,\dots,k\}$. Note that $k\ge1$, because $C\ne C'$, and $\gamma$ passes through $v$ if and only if $k=1$.

For $j\in\{1,\dots,k\}$, let $\mathcal I_j$ and $\mathcal J_j$ be such that $C_j=\prod_{i\in\mathcal I_j}I_i\times\prod_{i\in\mathcal J_j}J_i$. Note that $\mathcal I_0=\{1,\dots,n\}$, $\mathcal J_k=\{1,\dots,m\}$, and $\mathcal J_0=\mathcal I_k=\varnothing$. According to Lemma~\ref{lem:hull}, $\gamma$ cannot cross any hyperplane twice, so if $i\in\mathcal J_j$, then $i\in\mathcal J_{j'}$ for all $j'>j$. Similarly, if $i\not\in\mathcal I_j$, then $i\not\in\mathcal I_{j'}$ for all $j'>j$. Because of this, we can define 
\[
B_j\,=\!\prod_{i\in\mathcal J_j\smallsetminus\mathcal J_{j-1}} 
    \hspace{-3mm}J_i\; \text{ for } j>0, \text{ and }\quad
A_j\,=\!\prod_{i\in\mathcal I_j\smallsetminus\mathcal I_{j+1}}
    \hspace{-3mm}I_i\; \text{ for } j<k.
\]
By construction, these cubes have the property that 
\[
C_j=B_1\times\dots\times B_j\times A_{j+1}\times\dots\times A_k.
\]

For fixed $i\in\{1,\dots,k-1\}$, consider the restriction $\gamma_i$ of $\gamma$ between $x_{i-1}$ and $x_{i+2}$, which is contained in $C_{i-1}\cup C_i\cup C_{i+1}$. This union of three cubes can be decomposed as
\[
C_{i-1} \cup C_i \cup C_{i+1} = D_i \times \left( (A_i \times A_{i+1}) \cup (B_i \times A_{i+1}) \cup (B_i \times B_{i+1})\right),
\]
where $D_i$ is the cube $B_1 \times \dots \times B_{i-1} \times A_{i+2} \times \dots \times A_k$. According to Lemma~\ref{lem:unique_geodesic_3cubes}, $\gamma_i$ is the unique local $\ell^p$-geodesic in $C_{i-1}\cup C_i\cup C_{i+1}$ between its endpoints, and Lemma~\ref{lem:geodesic_product} implies the same for its projections to the two factors above. Because $\gamma_i$ avoids $D_i\times v$, its projection to the second factor avoids $v$. Lemma~\ref{lem:shortcut} now tells us that
\begin{align} 
\f{\|x_{i-1}-v\|_{A_i}}{\|x_{i+2}-v\|_{B_i}} \,<\, 
    \f{\|x_{i-1}-v\|_{A_{i+1}}}{\|x_{i+2}-v\|_{B_{i+1}}}. \label{eqn:tension}
\end{align}

The restriction of the path $\gamma$ between $x$ and $x_{i-1}$ lives in $C_0 \cup C_1 \cup \dots \cup C_{i-1}$, which contains $A_i \times A_{i+1}$ as a factor. According to Lemma~\ref{lem:geodesic_product}, we deduce that the projection of this subpath onto $A_i \times A_{i+1}$ is affine with constant speed. In particular, $A_i$ is a factor, so if we consider the projection to $A_i$ then we get 
\[
\f{\|x_{i-1}-v\|_{A_i}}{d(x_{i-1},v)} \,=\, \f{\|x-v\|_{A_i}}{d(x,v)}.
\]
Similarly, $A_{i+1}$ is a factor, so considering the projection to $A_{i+1}$ gives
\[
\f{\|x_{i-1}-v\|_{A_{i+1}}}{d(x_{i-1},v)} \,=\, \f{\|x-v\|_{A_{i+1}}}{d(x,v)}.
\]
In other words, there is a constant $\lambda=\f{d(x_{i-1},v)}{d(x,v)}$ such that 
\[
\|x_{i-1}-v\|_{A_i} = \lambda\|x-v\|_{A_i} \quad\text{ and }\quad 
    \|x_{i-1}-v\|_{A_{i+1}} = \lambda\|x-v\|_{A_{i+1}}.
\]
By the same reasoning, there is a constant $\mu=\f{d(x_{i+2},v)}{d(y,v)}$ such that
\[
\|x_{i+2}-v\|_{B_i} = \mu\|y-v\|_{B_i} \quad\text{ and }\quad 
    \|x_{i+2}-v\|_{B_{i+1}} = \mu\|y-v\|_{B_{i+1}}.
\]
Combining these with Equation~\eqref{eqn:tension}, we find that
\[
\f{\|x-v\|_{A_i}}{\|y-v\|_{B_i}} \,<\,  \f{\|x-v\|_{A_{i+1}}}{\|y-v\|_{B_{i+1}}}.
\]
As this holds for any $1 \leq i \leq k-1$, we conclude that 
\[ 
\f{\|x-v\|_{A_1}}{\|y-v\|_{B_1}} \,<\,  \f{\|x-v\|_{A_2}}{\|y-v\|_{B_2}} \,<\, \dots 
    \,<\,  \f{\|x-v\|_{A_k}}{\|y-v\|_{B_k}}. \qedhere
\]
\end{proof}

\subsection{Characterisation of local geodesics}

We are now in a position to give our characterisation of local $\ell^p$-geodesics in CAT(0) cube complexes. We use the following simple fact.

\begin{lem} \label{lem:amgm}
Let $a,b,c,d>0$, $p\ge1$. If $\f ab<\f cd$, then $\f{(a^p+c^p)^{\f1p}}{(b^p+d^p)^{\f1p}}<\f cd$.
\end{lem}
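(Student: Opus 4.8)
The plan is to reduce the asserted inequality, by a chain of equivalences, directly to the hypothesis; since the lemma is purely about positive reals there is no geometry involved, and the only thing to watch is that every manipulation preserves strict inequalities in the correct direction.

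Concretely, I would argue as follows. Because $b,d>0$, the inequality $\f{(a^p+c^p)^{1/p}}{(b^p+d^p)^{1/p}}<\f cd$ is equivalent to $d\,(a^p+c^p)^{1/p}<c\,(b^p+d^p)^{1/p}$, and then, since $t\mapsto t^p$ is strictly increasing on $[0,\infty)$, to $d^p(a^p+c^p)<c^p(b^p+d^p)$. Expanding both sides and cancelling the common summand $c^pd^p$ turns this into $a^pd^p<b^pc^p$, i.e. $(ad)^p<(bc)^p$, which (again by strict monotonicity of $t\mapsto t^p$ on $[0,\infty)$, together with $ad,bc>0$) is the same as $ad<bc$. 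Dividing through by $bd>0$, this is precisely the hypothesis $\f ab<\f cd$, so the desired conclusion follows.

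There is no real obstacle here; the only point requiring a modicum of care is to confirm that cross-multiplying, raising to the $p$-th power, and cancelling each keep the inequality strict and in the same direction, which they do because $a,b,c,d>0$ (and $p\ge1$ ensures $t\mapsto t^p$ is a strictly increasing bijection of $[0,\infty)$ onto itself). Note that the argument above consists entirely of equivalences, so it in fact proves the converse implication as well; only the stated direction is used in the sequel.
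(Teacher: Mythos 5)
Your proof is correct, and it is essentially the same elementary algebraic manipulation that the paper uses: reduce the claim, via cross-multiplication, raising to the $p$-th power, and cancelling the common term $c^pd^p$, to the hypothesis $ad<bc$. The paper's one-line version phrases it in the forward direction by rewriting the target fraction after pulling out a common factor and using monotonicity of $t\mapsto(1+t^p)^{1/p}$; note that as printed the paper multiplies by $ab$, which as written actually yields the companion lower bound $\frac{a}{b}<\frac{(a^p+c^p)^{1/p}}{(b^p+d^p)^{1/p}}$ rather than the stated upper bound (pulling out $cd$ instead of $ab$ gives the stated one), so your explicit chain of equivalences is the cleaner and more watertight presentation of the same idea.
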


\begin{proof}
We have $\f db<\f ca$, so $ab(1+(\f db)^p)^{\f1p}<ab(1+(\f ca)^p)^{\f1p}$, and the result follows.
\end{proof}

\begin{thm} \label{thm:description_local_geodesics}
Let $X$ be a CAT(0) cube complex, endowed with the piecewise $\ell^p$ metric for some $p \in (1,\infty)$, and let $x,y \in X$. Let $\gamma$ be a piecewise affine path from $x$ to $y$ in $X$, with break points $x_0=x,x_1,\dots,x_{k+1}=y$. For each $i$, let $C_i$ denote the minimal cube containing $\{x_i,x_{i+1}\}$. The path $\gamma$ is a local $\ell^p$-geodesic if and only if for every $i\in\{1,\dots,k\}$ we have
\begin{itemize}
\item The ``zero-tension condition'' between $x_{i-1}$, $x_i$, and $x_{i+1}$:
\[
\f{(x_{i-1}-x_i)_{C_{i-1} \cap C_i}}{d(x_{i-1},x_i)} 
    + \f{(x_{i+1}-x_i)_{C_{i-1} \cap C_i}}{d(x_{i+1},x_i)}=0;
\]
\item The ``no shortcut condition'' between $x_{i-1}$, $x_i$, and $x_{i+1}$: for any decompositions $C_{i-1}=A_1 \times A_2$ and $C_i=B_1 \times B_2$ such that $B_1 \times A_2$ belongs to $X$, we have
\[
\f{\|x_{i-1}-x_i\|_{A_1}}{\|x_{i+1}-x_i\|_{B_1}} \geq \f{\|x_{i-1}-x_i\|_{A_2}}{\|x_{i+1}-x_i\|_{B_2}}.
\]
\end{itemize}
\end{thm}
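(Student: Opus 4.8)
The plan is to exploit that being a local $\ell^p$-geodesic is a condition at the break points only. First I would note that, away from the breaks, $\gamma$ runs through the interior of a single cube (the segment $[x_{i-1},x_i]$ lies in its minimal cube $C_{i-1}$), where it is a straight segment and hence automatically locally geodesic; so $\gamma$ is a local $\ell^p$-geodesic if and only if it is geodesic on a neighbourhood of each interior break point $x_i$. Fixing $i$ and shortening $x_{i-1},x_{i+1}$ along the two incident segments --- which alters neither $C_{i-1},C_i$ nor the two stated conditions, both being homogeneous in the difference vectors --- it then suffices to show that the concatenation $[x_{i-1},x_i]\cup[x_i,x_{i+1}]$ is a geodesic in $X$ exactly when the zero-tension and no-shortcut conditions hold at $i$.

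Next I would set $D_i=C_{i-1}\cap C_i$ and factor $C_{i-1}=D_i\times C_{i-1}^{0}$, $C_i=D_i\times C_i^{0}$, so that $C_{i-1}^{0}$ and $C_i^{0}$ meet in a single vertex $v$ (the image of $x_i$ in each factor). By Lemma~\ref{lem:hull}, once $x_{i-1},x_{i+1}$ are taken close enough to $x_i$ every geodesic with those endpoints lies in the median hull of the endpoints, hence in $D_i\times Z$ where $Z$ is the median hull of $C_{i-1}^{0}\cup C_i^{0}$; Lemmas~\ref{lem:lp_product} and~\ref{lem:geodesic_product} then turn geodesicity of $\gamma$ near $x_i$ into geodesicity of its projections to $D_i$ and to $Z$. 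The $D_i$-projection is a concatenation of two affine segments through $x_i$, and it is a constant-speed geodesic in the uniquely geodesic cube $D_i$ exactly when those segments are collinear with matching speed; rearranging this gives precisely the zero-tension equality (this is the computation inside the proof of Lemma~\ref{lem:zero_tension}). So everything comes down to matching geodesicity of the $Z$-projection, where the two cubes meet in the vertex $v$, with the no-shortcut condition.

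For that last matching I would run the two implications through the vertex-intersection lemmas. If $\gamma$ is geodesic near $x_i$, its $Z$-projection is too, and Lemma~\ref{lem:outside_v} furnishes maximal decompositions $C_{i-1}^{0}=\prod_jA_j'$, $C_i^{0}=\prod_jB_j'$ with intermediate cubes in $X$ and a \emph{strictly} increasing chain of ratios $\|x_{i-1}-x_i\|_{A_j'}/\|x_{i+1}-x_i\|_{B_j'}$. Given any admissible decomposition $C_{i-1}=A_1\times A_2$, $C_i=B_1\times B_2$ with $B_1\times A_2\in X$, I would argue, using Lemma~\ref{lem:hull} and the fact that $B_1\times A_2$ is a cube of $X$, that $A_1,A_2,B_1,B_2$ are built from whole blocks of the fine decomposition together with directions of $D_i$, in a way compatible with the staircase, the $D_i$-directions contributing equal ratios on the two sides by zero-tension; then repeatedly feeding these equalities and the strict chain into Lemma~\ref{lem:amgm} produces the no-shortcut inequality. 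Conversely, assuming both conditions at $i$, the $D_i$-projection is geodesic by zero-tension, and any competing geodesic for the $Z$-projection threads through a staircase of cubes, hence lies in some $C\cup(B_1\times A_2)\cup C'$; applying Lemma~\ref{lem:shortcut}, reinstating the $D_i$ factor via Lemma~\ref{lem:shortcut_cube}, to each such decomposition --- exactly the family the no-shortcut hypothesis ranges over --- shows the $Z$-projection cannot be shortened, so $\gamma$ is geodesic near $x_i$; doing this for every $i$ finishes the proof.

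The hard part will be the combinatorial bookkeeping in the forward implication: showing that an arbitrary admissible two-factor decomposition of the pair $(C_{i-1},C_i)$ is a coarsening of the fine decomposition of Lemma~\ref{lem:outside_v} that respects the staircase order (so that $B_1\times A_2\in X$ cannot cross the chain), tracking precisely how the shared factor $D_i$ is split among $A_1,A_2,B_1,B_2$, and then assembling the single no-shortcut inequality from the chain of strict inequalities together with the $D_i$-equalities via Lemma~\ref{lem:amgm}. A more routine but still necessary point is the reduction legitimising that local geodesicity can be verified break point by break point with competitor paths confined to a controlled neighbourhood by Lemma~\ref{lem:hull}.
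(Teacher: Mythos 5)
Your high-level architecture is right---reduce to each break point $x_i$, factor off $D_i=C_{i-1}\cap C_i$ to get $D_i\times Y_i$ via Lemmas~\ref{lem:hull}, \ref{lem:lp_product}, \ref{lem:geodesic_product}, read off the zero-tension condition from the $D_i$-projection, and handle the $Y_i$-projection with the vertex-intersection lemmas. But you have assigned Lemma~\ref{lem:outside_v} to the wrong direction, and in both directions your use of it does not go through.

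In the forward direction (geodesic $\Rightarrow$ conditions), you propose to apply Lemma~\ref{lem:outside_v} to the $Y_i$-projection of $\gamma$ itself and then coarsen the resulting fine decomposition. But the $Y_i$-projection of $\gamma$ passes \emph{through} the vertex $v=\pi_{Y_i}(x_i)$, and for a geodesic through $v$ the decomposition produced by Lemma~\ref{lem:outside_v} is trivial ($k=1$, $A_1=C'_{i-1}$, $B_1=C'_i$), so there is no strict chain of ratios to feed into Lemma~\ref{lem:amgm}, and the ``coarsening'' step never gets off the ground. In fact the forward direction needs no such machinery: since $\gamma$ is a local geodesic in $X$, its restriction near $x_i$ is a local geodesic in any subcomplex containing it, in particular in $C_{i-1}\cup(D_i\times B_1\times A_2)\cup C_i$ for each admissible decomposition; Lemma~\ref{lem:shortcut_cube} applied there gives the no-shortcut and zero-tension conditions directly. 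That is all the paper does for this implication.

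In the converse (conditions $\Rightarrow$ geodesic), you say a competing geodesic for the $Y_i$-projection ``lies in some $C\cup(B_1\times A_2)\cup C'$'' and is then killed by Lemma~\ref{lem:shortcut}. That containment is false in general: a shortcut avoiding $v$ can thread through an arbitrarily long staircase of corner cubes $C_1,\dots,C_{k-1}$, not just a single one, so there is no single admissible two-factor decomposition whose no-shortcut inequality directly contradicts it. This is precisely where Lemma~\ref{lem:outside_v} belongs: apply it to the \emph{competing} geodesic (which does avoid $v$) to obtain the fine decomposition with a strictly increasing chain, then use the zero-tension equality to derive the key comparison of end ratios (Equation~\eqref{eqn:h_ratio} in the paper), combine with Lemma~\ref{lem:amgm} to get Equation~\eqref{eqn:1_ratio}, and finally coarsen to a single pair $(A_1,A_2),(B_1,B_2)$ (putting $D_i$ and the first $h-1$ blocks together on one side and the last block on the other) whose no-shortcut inequality is violated. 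This last coarsening computation is the nontrivial content of the converse and is absent from your sketch. Until you produce the single violated inequality from the chain, the converse is not complete.
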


\begin{proof}
The forward direction is given by Lemma~\ref{lem:shortcut_cube}. For the converse, suppose that both conditions hold between $x_{i-1}$, $x_i$, and $x_{i+1}$. Let $\gamma_i$ be the restriction of $\gamma$ between $x_{i-1}$ and $x_{i+1}$. We shall prove that $\gamma_i$ is locally geodesic at $x_i$. This suffices, because each component of $\gamma\smallsetminus\{x_1,\dots,x_k\}$ is an affine segment in a cube of $X$. 

Let $X_i$ be the median hull of $C_{i-1}\cup C_i$. According to Lemma~\ref{lem:hull}, it suffices to show that $\gamma_i$ is locally geodesic at $x_i$ inside $X_i$. Let us write $D_i=C_{i-1} \cap C_i$, so that $X_i=D_i \times Y_i$, where $Y_i$ is a finite-dimensional CAT(0) cube complex containing a distinguished vertex $v$ corresponding to $D_i$. 

According to Lemma~\ref{lem:lp_product} and Lemma~\ref{lem:geodesic_product}, $\gamma_i$ is locally geodesic at $x_i$ if and only if its two projections $\pi_{D_i}\gamma_i$ and $\pi_{Y_i}\gamma_i$ are constant-speed local geodesics. The path $\pi_{D_i}\gamma_i$ is a constant-speed geodesic if and only if it is affine, which is indeed the case because of the ``zero-tension condition''. Parametrising $\gamma$ so that it is constant-speed forces $\pi_{Y_i}\gamma_i$ to be constant-speed, so it suffices to show that $\pi_{Y_i}\gamma_i$ is a local geodesic at $v=\pi_{Y_i}(x_i)$. 

Let us write $C_{i-1}=D_i\times C'_{i-1}$ and $C_i=D_i\times C'_i$, where $C'_{i-1}$ and $C'_i$ are cubes of $Y_i$ meeting in $v$. Observe that $\pi_{Y_i}\gamma_i$ consists of the concatenation of the affine segment in $C'_{i-1}$ from $\pi_{C'_{i-1}}(x_{i-1})$ to $v$ with the affine segment in $C'_i$ from $v$ to $\pi_{C'_i}(x_{i+1})$. 

\mk

For a contradiction, assume that $\pi_{Y_i}\gamma_i$ is not locally geodesic at $v$. As $Y_i$ is finite-dimensional, Lemma~\ref{lem:geodesic} tells us that there exists a geodesic $\gamma'_{Y_i}$ from $x'=\pi_{Y_i}(x_{i-1})$ to $y'=\pi_{Y_i}(x_{i+1})$. This must be distinct from $\pi_{Y_i}\gamma_i$, and hence cannot pass through $v$. Applying Lemma~\ref{lem:outside_v} to $\gamma'_{Y_i}$, we find that there is some $h \geq 2$ and decompositions $C'_{i-1}=\prod_{j=1}^hA'_j$ and $C'_i =\prod_{j=1}^hB'_j$ such that, for any $j\in\{1,\dots,h-1\}$, the cube $Q_j=B'_1 \times \dots \times B'_j \times A'_{j+1} \times \dots \times A'_h$ belongs to $Y_i$, and 
\[
\f{\|x'-v\|_{A'_1}}{\|y'-v\|_{B'_1}} \,<\,  \f{\|x'-v\|_{A'_2}}{\|y'-v\|_{B'_2}} 
    \,<\, \dots \,<\,  \f{\|x'-v\|_{A'_h}}{\|y'-v\|_{B'_h}}. 
\]
Note that $(x_{i-1}-x_i)_{A'_j}=(x'-v)_{A'_j}$ and $(x_{i+1}-x_i)_{B'_j}=(y'-v)_{B'_j}$ for all $j$. We can therefore rewrite this as
\begin{align} \label{eqn:gamma'shortcut}
\f{\|x_{i-1}-x_i\|_{A'_1}}{\|x_{i+1}-x_i\|_{B'_1}} \,<\,  \f{\|x_{i-1}-x_i\|_{A'_2}}{\|x_{i+1}-x_i\|_{B'_2}} 
    \,<\, \dots \,<\,  \f{\|x_{i-1}-x_i\|_{A'_h}}{\|x_{i+1}-x_i\|_{B'_h}}. 
\end{align}

By the ``zero-tension condition'', the proportion of $d(x_{i-1},x_i)$ coming from $D_i$ is the same as the proportion of $d(x_{i+1},x_i)$ coming from $D_i$. In other words, we have
\[
\f{\|x_{i-1}-x_i\|_{D_i}}{d(x_{i-1},x_i)} \,=\, \f{\|x_{i+1}-x_i\|_{D_i}}{d(x_{i+1},x_i)}.
\]
Considering only factors of $C_{i-1}'$ and $C_i'$, we deduce that
\begin{align} \label{eqn:proportions}
\sum_{j=1}^h \f{\|x_{i-1}-x_i\|_{A'_j}}{d(x_{i-1},x_i)} \,=\, 
    \sum_{j=1}^h \f{\|x_{i+1}-x_i\|_{B'_j}}{d(x_{i+1},x_i)}.
\end{align}
One consequence of this is that we cannot have $\f{\|x_{i-1}-x_i\|_{A'_h}}{d(x_{i-1},x_i)} < \f{\|x_{i+1}-x_i\|_{B'_h}}{d(x_{i+1},x_i)}$, for then Equation~\eqref{eqn:gamma'shortcut} would tell us that $\f{\|x_{i-1}-x_i\|_{A'_j}}{d(x_{i-1},x_i)} < \f{\|x_{i+1}-x_i\|_{B'_j}}{d(x_{i+1},x_i)}$ for all $j$, contradicting Equation~\eqref{eqn:proportions}. We therefore have
\begin{align} \label{eqn:h_ratio}
\f{\|x_{i+1}-x_i\|_{B'_h}}{d(x_{i+1},x_i)} \,\le\, \f{\|x_{i-1}-x_i\|_{A'_h}}{d(x_{i-1},x_i)}.
\end{align}
By applying Lemma~\ref{lem:amgm} to Equation~\eqref{eqn:gamma'shortcut}, we see that 
\begin{align} \label{eqn:1_ratio}
 \f{\|x_{i-1}-x_i\|_{A'_1\times\dots\times A'_{h-1}}}{\|x_{i+1}-x_i\|_{B'_1\times\dots\times B'_{h-1}}}
    \,<\, \f{\|x_{i-1}-x_i\|_{A'_h}}{\|x_{i+1}-x_i\|_{B'_h}}.
\end{align}

Armed with these inequalities, let
\begin{align*}
A_1 \,=\, D\times A'_1\times A'_2\times\dots\times A'_{h-1}, &\quad A_2\,=\,A'_h, \\
B_1 \,=\, D\times B'_1\times B'_2\times\dots\times B'_{h-1}, &\quad B_2\,=\,B'_h.
\end{align*}
We have $C_{i=1}=A_1\times A_2$ and $C_i=B_1\times B_2$. Moreover, the cube $B_1\times A_2=D\times B'_1\times\dots\times B'_{h-1}\times A'_h$ belongs to $X$.

From the ``zero-tension condition'', we compute
\begin{align*}
\alpha\,&\vcentcolon=\, \f{\|x_{i-1}-x_i\|_{A_1}^p}{d(x_{i-1},x_i)^p} 
        \cdot \f{\|x_{i+1}-x_i\|_{B_2}^p}{d(x_{i+1},x_i)^p} \\
&=\, \f{\|x_{i-1}-x_i\|_{D_i}^p + \|x_{i-1}-x_i\|_{A'_1\times\dots\times A'_{h-1}}^p} {d(x_{i-1},x_i)^p} 
        \cdot \f{\|x_{i+1}-x_i\|_{B'_h}^p}{d(x_{i+1},x_i)^p} \\
&=\, \f{\|x_{i+1}-x_i\|_{D_i}^p\|x_{i+1}-x_i\|_{B'_h}^p}{d(x_{i+1},x_i)^pd(x_{i+1},x_i)^p}
    \,+\, \f{\|x_{i-1}-x_i\|_{A'_1\times\dots\times A'_{h-1}}^p\|x_{i+1}-x_i\|_{B'_h}^p}
            {d(x_{i-1},x_i)^pd(x_{i+1},x_i)^p}.
\end{align*}
Applying Equation~\eqref{eqn:h_ratio} to the first term of this expression, we find that
\[
\alpha \,\le\, \f{\|x_{i+1}-x_i\|_{D_i}^p\|x_{i-1}-x_i\|_{A'_h}^p}{d(x_{i+1},x_i)^pd(x_{i-1},x_i)^p}
    \,+\, \f{\|x_{i-1}-x_i\|_{A'_1\times\dots\times A'_{h-1}}^p\|x_{i+1}-x_i\|_{B'_h}^p}
            {d(x_{i-1},x_i)^pd(x_{i+1},x_i)^p},
\]
and applying (a rearranged) Equation~\eqref{eqn:1_ratio} to the second term gives
\begin{align*}
\alpha \,&\le\, \f{\|x_{i+1}-x_i\|_{D_i}^p\|x_{i-1}-x_i\|_{A'_h}^p}{d(x_{i-1},x_i)^pd(x_{i+1},x_i)^p}
    \,+\, \f{\|x_{i+1}-x_i\|_{B'_1\times\dots\times B'_{h-1}}^p\|x_{i-1}-x_i\|_{A'_h}^p}
            {d(x_{i-1},x_i)^pd(x_{i+1},x_i)^p} \\
    &=\, \f{\|x_{i+1}-x_i\|_{B_1}^p}{d(x_{i+1},x_i)^p}
        \cdot \f{\|x_{i-1}-x_i\|_{A_2}^p}{d(x_{i-1},x_i)^p}.
\end{align*}
From this, we conclude that 
\[
\f{\|x_{i-1}-x_i\|_{A_1}}{\|x_{i+1}-x_i\|_{B_1}} \,<\, \f{\|x_{i-1}-x_i\|_{A_2}}{\|x_{i+1}-x_i\|_{B_2}}.
\]
This contradicts the ``no shortcut condition'', which we are assuming, so $\pi_{Y_i}\gamma_i$ is locally geodesic at $v$. This completes the proof.
\end{proof}

\subsection{Unique geodesicity and distance formula}

Now that we have a characterisation of $\ell^p$-geodesics in terms of the ``zero tension'' and ``no shortcut'' conditions, we use it to show that there can only be one $\ell^p$-geodesic joining any given pair of points in the neighbourhood of a vertex of $X$. In other words, $X$ is locally uniquely $\ell^p$-geodesic. Because Lemma~\ref{lem:outside_v} shows that any local $\ell^p$-geodesic determines a collection of decompositions satisfying a certain chain of inequalities, this essentially amounts to showing that only one such decomposition can exist.

\begin{pro} \label{prop:unique_decomposition}
Let $C$ and $C'$ be cubes in a CAT(0) cube complex $X$ that intersect in a vertex $v$. Let $x \in C$ and $y \in C'$ be such that no subcube of $C$ contains $\{x,v\}$ and no subcube of $C'$ contains $\{y,v\}$. There exist unique maximal decompositions $C=\prod_{j=1}^kA_j$ and $C'=\prod_{j=1}^kB_j$ such that, for each $j\in\{1,\dots,k-1\}$, the cube $C_j=B_1 \times \dots \times B_j \times A_{j+1} \times \dots \times A_k$ belongs to $X$ and moreover 
\begin{align} \label{eqn:maximal}
\f{\|x-v\|_{A_1}}{\|y-v\|_{B_1}} \,<\, \f{\|x-v\|_{A_2}}{\|y-v\|_{B_2}} 
    \,<\, \dots \,<\, \f{\|x-v\|_{A_k}}{\|y-v\|_{B_k}}. 
\end{align}
\end{pro}

\begin{proof}
Such decompositions exist because of Lemma~\ref{lem:outside_v}, so we must show that they are unique. We first prove the statement for $p=2$. 

Given maximal decompositions $C=\prod_{j=1}^kA_j$ and $C'=\prod_{j=1}^kB_j$ such that every $C_j=B_1 \times \dots \times B_j \times A_{j+1} \times \dots \times A_k$ belongs to $X$ and such that Equation~\eqref{eqn:maximal} is satisfied for the norm $\ell^2$, consider the geodesic $\gamma$ from $x$ to $y$ in $C\cup C'\cup\bigcup_{j=1}^k C_j$. According to Theorem~\ref{thm:description_local_geodesics}, $\gamma$ is a local geodesic in $X$. But $X$ is CAT(0), so $\gamma$ is the unique global geodesic from $x$ to $y$. This shows that the decompositions are unique when $p=2$.

\mk

We now turn to the case $p \neq 2$. For simplicity, identify the vertex $v$ with $0$ in each cube in the median hull of $C\cup C'$. Let us consider the point $x'\in C=[0,1]^d$ whose $j^\mathrm{th}$ coordinate is $x'_i = x_i^{\f{p}{2}}$ for each $j$. Similarly, consider the point $y' \in C'=[0,1]^{d'}$ whose $j^\mathrm{th}$ coordinate is $y'_i = y_i^{\f{p}{2}}$ for each $j$. For each factor $A=[0,1]^r \times \{0\}^{d-r}$ of the cube $C=[0,1]^d$, we have
\[
\|x'-v\|^2_{(A,\ell^2)} \,=\, \sum_{i=1}^r{x'_i}^2 
    \,=\, \sum_{i=1}^r{x_i}^p \,=\, \|x-v\|^p_{(A,\ell^p)}{ }.
\]
Similarly, for each factor $B=[0,1]^{r'} \times \{0\}^{d'-r'}$ of the cube $C'=[0,1]^{d'}$, we have
\[
\|y'-v\|^2_{(B,\ell^2)} \,=\, \sum_{i=1}^{r'}{y'_i}^2 
    \,=\, \sum_{i=1}^{r'}{y_i}^p = \|y-v\|^p_{(B,\ell^p)}{ }.
\]
According to the case $p=2$ applied to the pair $(x',y')$, the maximal decompositions of $C$ and $C'$ satisfying Equation~\eqref{eqn:maximal} for the norm $\ell^p$ are unique.
\end{proof}

For $C$ a cube in a CAT(0) cube complex $X$, we write $\Star C$ for the union of all cubes containing $C$.

\begin{pro} \label{prop:lug}
Let $X$ be a CAT(0) cube complex, and equip $X$ with the path metric $d$ induced by giving each cube the $\ell^p$-metric. The space $(X,d)$ is locally uniquely geodesic. More specifically, every star of a vertex of $X$ is uniquely geodesic.
\end{pro}

\begin{proof}
Let $w$ be a vertex of $X$, and let $x,y\in\Star w$. Let $C$ be the minimal cube of $\Star w$ containing $\{x,w\}$, and let $C'$ be the minimal cube of $\Star w$ containing $\{y,w\}$. Let $D=C\cap C'$. By Lemma~\ref{lem:hull}, every $\ell^p$-geodesic from $x$ to $y$ is contained in the median hull $H$ of $C\cup C'$. As $H$ decomposes as $D\times Y$ for some CAT(0) cube complex $Y$, Lemma~\ref{lem:geodesic_product} states that a path $\gamma$ from $x$ to $y$ is a local geodesic if and only if $\pi_D\gamma$ and $\pi_Y\gamma$ are constant-speed geodesics. It thus suffices to show that there are unique $\ell^p$-geodesics from $\pi_D(x)$ to $\pi_D(y)$ and from $\pi_Y(x)$ to $\pi_Y(y)$. 

Since $D$ is a cube, the affine segment from $\pi_D(x)$ to $\pi_D(y)$ is the unique geodesic.

Write $C=C_Y\times D$ and $C'=C'_Y\times D$, so that $C_Y$ and $C'_Y$ intersect in a single vertex $v$ of $Y$ corresponding to $D$. The sequence of cubes passed through by a local $\ell^p$-geodesic in $Y$ from $\pi_Y(x)$ to $\pi_Y(y)$ gives rise to decompositions of $C_Y$ and $C'_Y$ satisfying a certain sequence of inequalities, as shown by Lemma~\ref{lem:outside_v}. Proposition~\ref{prop:unique_decomposition} states that the maximal such decompositions are unique. This shows that there is a fixed sequence of cubes $C_0=C_y,C_1,\dots,C_k=C'_Y$ that contains every local $\ell^p$-geodesic from $\pi_Y(x)$ to $\pi_Y(y)$. In a fixed sequence of cubes, a local $\ell^p$-geodesic is the solution of a convex optimisation problem (given by measuring the distances between break-points), so each sequence of cubes supports only one local $\ell^p$-geodesic. Thus there is a unique local $\ell^p$-geodesic in $Y$ from $\pi_Y(x)$ to $\pi_Y(y)$, which must therefore be the unique geodesic.
\end{proof}

It is worth noting that the decompositions utilised in this section give a convenient tool for computing the $\ell^p$-distance between two points of $X$. 

\begin{lem}[Distance formula] \label{lem:distance_formula}
Let $C$ and $C'$ be cubes of a CAT(0) cube complex $X$ that intersect in a vertex $v$. Let $x\in C$ and $y\in C'$ be such that $C$ is the minimal cube containing $\{x,v\}$ and $C'$ is the minimal cube containing $\{y,v\}$. Let $C=\prod_{j=1}^kA_j$ and $C'=\prod_{j=1}^kB_j$ be the decompositions provided by Proposition~\ref{prop:unique_decomposition}. The $\ell^p$-distance from $x$ to $y$ is given by
\[
\left\|\big(\|x-v\|_{A_1},\dots,\|x-v\|_{A_k}\big)
    \,+\, \big(\|y-v\|_{B_1},\dots,\|y-v\|_{B_k}\big)\right\|.
\]
\end{lem}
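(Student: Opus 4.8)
\textit{Proof strategy.} The plan is to identify the part of $X$ that carries the geodesic from $x$ to $y$ with a metric product of elementary pieces and to read off the distance there. For each $j\in\{1,\dots,k\}$ let $P_j$ be the CAT(0) cube complex obtained from the cubes $A_j$ and $B_j$ by identifying their common vertex $v$ — this is a genuine subcomplex of $X$, since $A_j\cap B_j\subseteq C\cap C'=\{v\}$ — equipped with the $\ell^p$-metric, and let $\bar P=\prod_{j=1}^{k}P_j$ carry the $\ell^p$-product metric. In $P_j$ the geodesic from $\pi_{A_j}(x)$ to $\pi_{B_j}(y)$ passes through $v$ (immediate if one of these projections equals $v$, and otherwise because $A_j\cap B_j=\{v\}$), so it has length $\|x-v\|_{A_j}+\|y-v\|_{B_j}$. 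Iterating Lemmas~\ref{lem:geodesic_product} and~\ref{lem:lp_product} then gives
\[
d_{\bar P}(x,y)\,=\,\left(\sum_{j=1}^{k}\bigl(\|x-v\|_{A_j}+\|y-v\|_{B_j}\bigr)^{p}\right)^{\f1p},
\]
which is precisely the quantity in the statement. Hence it suffices to prove that $d_X(x,y)=d_{\bar P}(x,y)$.

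Let $W=\bigcup_{j=0}^{k}C_j\subseteq X$ be the ``staircase'' subcomplex, where $C_0=C$, $C_k=C'$, and $C_j=B_1\times\dots\times B_j\times A_{j+1}\times\dots\times A_k$ for $0<j<k$; since each $C_j$ contains $v$ we have $W\subseteq\Star v$. Define $\iota\colon W\to\bar P$ by letting it be, on each $C_j$, the tautological identification of $C_j$ with the sub-cube $B_1\times\dots\times B_j\times A_{j+1}\times\dots\times A_k$ of $\bar P$ (using $B_i\subseteq P_i$ for $i\le j$ and $A_i\subseteq P_i$ for $i>j$). These partial definitions agree on the overlaps $C_{j-1}\cap C_j=B_1\times\dots\times B_{j-1}\times\{v\}\times A_{j+1}\times\dots\times A_k$, because in $P_j$ the vertex $v$ of $A_j$ has been glued to the vertex $v$ of $B_j$. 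Thus $\iota$ is a well-defined combinatorial isomorphism onto its image that restricts to an isometry on every cube, hence a local isometry; by construction $\iota(x)=x$ and $\iota(y)=y$ under the natural inclusions $C,C'\subseteq\bar P$.

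Now I compare the geodesics. By Lemma~\ref{lem:outside_v} together with the uniqueness in Proposition~\ref{prop:unique_decomposition}, the $\ell^p$-geodesic $\sigma$ from $x$ to $y$ in $X$ is the piecewise affine path traversing exactly the cubes $C_0,C_1,\dots,C_k$, so $\sigma\subseteq W$, and $d_X(x,y)=\operatorname{length}(\sigma)=\operatorname{length}(\iota\circ\sigma)\ge d_{\bar P}(x,y)$. Conversely, write the $\bar P$-geodesic from $x$ to $y$ as $\tau=(\tau_1,\dots,\tau_k)$; by Lemma~\ref{lem:geodesic_product} each $\tau_j$ is the constant-speed geodesic in $P_j$ from $\pi_{A_j}(x)$ to $\pi_{B_j}(y)$, which crosses $v$ at parameter $s_j=\f{\|x-v\|_{A_j}}{\|x-v\|_{A_j}+\|y-v\|_{B_j}}$. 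A one-line computation shows $s_j<s_{j+1}$ is equivalent to $\|x-v\|_{A_j}/\|y-v\|_{B_j}<\|x-v\|_{A_{j+1}}/\|y-v\|_{B_{j+1}}$, so the chain of inequalities supplied by Proposition~\ref{prop:unique_decomposition} yields $s_1<\dots<s_k$. Therefore, for each $t$, the point $\tau(t)$ lies in the cube $B_1\times\dots\times B_m\times A_{m+1}\times\dots\times A_k$ with $m=\#\{\,j:s_j<t\,\}$, i.e.\ $\tau$ stays inside $\iota(W)$. Hence $\iota^{-1}\circ\tau$ is a path in $W\subseteq X$ from $x$ to $y$, which gives $d_{\bar P}(x,y)=\operatorname{length}(\tau)\ge d_X(x,y)$. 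Combining the two inequalities, $d_X(x,y)=d_{\bar P}(x,y)$, as desired.

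The points needing care are that $\iota$ is genuinely well defined and a local isometry — so that it preserves lengths of paths — and the small computation matching the inequalities of Proposition~\ref{prop:unique_decomposition} with the ordering of the crossing parameters $s_j$; neither poses a real difficulty. One could instead replace the ``converse'' half of the argument by writing the candidate geodesic $\iota^{-1}\circ\tau$ out explicitly and checking the zero-tension and no-shortcut conditions at its break points via Theorem~\ref{thm:description_local_geodesics}, before appealing to the local unique geodesicity of Proposition~\ref{prop:lug}.
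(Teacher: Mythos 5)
Your proof is correct and uses the same central construction as the paper: the product $\bar P=\prod_j P_j$ of wedges $P_j = A_j \vee_v B_j$ (which the paper calls $Q=\prod_j Q_j$). The only difference is in proving $d_{\bar P}(x,y)\ge d_X(x,y)$: the paper observes that the $X$-geodesic $\gamma$ still satisfies the zero-tension and no-shortcut conditions inside $Q$ and invokes the converse of Theorem~\ref{thm:description_local_geodesics}, whereas you track the $\bar P$-geodesic $\tau$ explicitly, matching the chain of inequalities from Proposition~\ref{prop:unique_decomposition} with the ordering of the crossing times $s_j$ to show $\tau\subseteq\iota(W)$ and pull back by the local isometry $\iota$ --- the same idea made more hands-on.
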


\begin{figure}[ht]
\includegraphics[height=5.3cm]{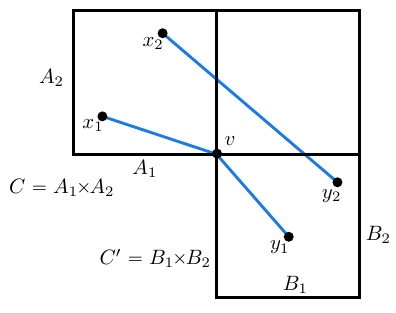}\centering
\caption{The distance from $x_1$ to $y_1$ is equal to $\|x_1-v\|_C+\|y_1-v\|_{C'}$: the decompositions are the trivial ones $C=C$, $C'=C'$. \\ 
The distance from $x_2$ to $y_2$ is the same as in the space $(A_1\vee B_1)\times(A_2\vee B_2)$: the decompositions are $C=A_1\times A_2$, $C'=B_1\times B_2$.} \label{fig:distance_formula}
\end{figure}

\begin{proof}
Let $Q_j$ be the CAT(0) cube complex obtained by taking a wedge sum of $A_j$ and $B_j$ at $v$, and consider $Q=\prod_{j=1}^kQ_j$. We can identify the median hull $H$ of $C\cup C'$ as a subcomplex of $Q$, so any path in $H$ can be viewed as a path in $Q$. Let $\gamma$ be the $\ell^p$-geodesic in $X$ from $x$ to $y$, which is contained in $H$. By Theorem~\ref{thm:description_local_geodesics}, $\gamma$ satisfies the ``zero tension'' and ``no shortcut'' conditions. Applying the converse to $Q$, we see that $\gamma$ is a geodesic of $Q$. Hence $d(x,y)=d_Q(x,y)$, which can be calculated with the formula in the statement.
\end{proof}

The product $Q$ in the proof of Lemma~\ref{lem:distance_formula} gives a convenient way of interpreting the sequence of inequalities in Equation~\eqref{eqn:maximal}: it dictates which cubes of $Q$ are met by the unique geodesic in $Q$.

\subsection{Busemann-convexity}

We now move to proving that CAT(0) cube complexes with the $\ell^p$-metric are Busemann-convex. We begin with the finite-dimensional case, which we shall use to prove the general case in Theorem~\ref{thm:busemann_cube_complex}.

\begin{lem} \label{lem:finite_dimensional_busemann}
Any finite-dimensional CAT(0) cube complex equipped with the piecewise $\ell^p$-metric is Busemann-convex.
\end{lem}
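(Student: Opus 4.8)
The strategy is to apply Theorem~\ref{thm:criterion_busemann_convex} (the local-to-global criterion from Section~\ref{sec:bicombing}), so I need to verify its three hypotheses for a finite-dimensional CAT(0) cube complex $X$ with the $\ell^p$-metric. Simple connectedness is immediate, since CAT(0) cube complexes are contractible, and finite-dimensionality gives finitely many shapes (only unit cubes $[0,1]^d$ for $d\le\dim X$ appear). The real content is the local consistent geodesic bicombing and the Busemann-convexity of the union of two intersecting maximal cubes.

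For the local consistent geodesic bicombing, I would use Proposition~\ref{prop:lug}: every star of a vertex of $X$ is uniquely geodesic, so the stars of vertices form an open cover on which unique geodesics define a bicombing. I need to check consistency, i.e.\ that subsegments of geodesics are geodesics between their endpoints, which follows from unique geodesicity together with the fact (Lemma~\ref{lem:hull}) that local geodesics stay in the median hull of their endpoints; and the compatibility condition on overlaps, which holds because the geodesic between two points in the intersection of two vertex stars is determined uniquely and must agree in both. So $X$ locally admits a consistent geodesic bicombing.

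For the third hypothesis, let $A,B$ be two intersecting maximal cubes of $X$, with $A\cap B=D$. Writing $A=D\times A_0$ and $B=D\times B_0$, the union decomposes as $A\cup B=D\times(A_0\cup B_0)$, where $A_0$ and $B_0$ are cubes meeting in a single vertex $v$. By Lemma~\ref{lem:lp_product} and Lemma~\ref{lem:geodesic_product}, $(A\cup B,d_p)$ is Busemann-convex if and only if both factors are, and a single cube (namely $D$) with the $\ell^p$-metric is a convex subset of $(\R^{\dim D},\ell^p)$, which is Busemann-convex for $p\in(1,\infty)$ since the $\ell^p$-norm is strictly convex. It therefore remains to show that $A_0\cup B_0$ — a wedge of two cubes at a vertex — is Busemann-convex. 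I would verify this directly using the characterisation of local geodesics (Theorem~\ref{thm:description_local_geodesics}) and the distance formula: unique geodesicity in $A_0\cup B_0$ follows from Proposition~\ref{prop:lug} (the whole of $A_0\cup B_0$ lies in the star of $v$), and convexity of the midpoint map follows from the fact that $A_0\cup B_0$ embeds isometrically into the product $(A_0\vee B_0)_1\times\cdots$ arising in Lemma~\ref{lem:distance_formula}, but more directly from the strict convexity of $\ell^p$ on each cube together with the distance formula. Concretely, one checks the midpoint-convexity inequality of Lemma~\ref{lem:busemann_sufficient} by reducing, via the product decomposition of the median hull, to the inequality in a genuine $\ell^p$-product of one-dimensional pieces, where it is classical.

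The main obstacle is the verification that the wedge $A_0\cup B_0$ of two cubes is Busemann-convex with the $\ell^p$-metric; everything else is bookkeeping. The cleanest route is the observation from Lemma~\ref{lem:distance_formula} and the discussion following it: the median hull of $A_0\cup B_0$ together with the relevant corner cubes embeds isometrically in a product $Q=\prod_j Q_j$ of wedges of intervals, and in such a product the bicombing is the product of the one-dimensional bicombings by Lemma~\ref{lem:geodesic_product}; since each $\ell^p$-interval-wedge is uniquely geodesic and convex (a wedge of two segments in $\R$ is just an interval, hence flat), $Q$ is Busemann-convex, and an isometrically embedded geodesically-convex subspace of a Busemann-convex space is Busemann-convex. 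I would phrase the argument so as to invoke this product structure rather than re-deriving convexity estimates by hand. Once Theorem~\ref{thm:criterion_busemann_convex} applies, $X$ is Busemann-convex, completing the proof.
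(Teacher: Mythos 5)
Your plan is to apply Theorem~\ref{thm:criterion_busemann_convex} rather than the paper's direct induction. The paper deliberately avoids this (see the remark at the start of Section~\ref{sec:lp_cube_complex}: the Section~\ref{sec:bicombing} machinery will not extend to the infinite-dimensional case treated in Theorem~\ref{thm:busemann_cube_complex}), and instead proves the lemma by induction on $\dim X$, handling the vertex case through explicit midpoint estimates near $v$ (the $s^-,s^+,m^-,m^+$ computation) together with Lemma~\ref{lem:locally_in_star}. Your route is \emph{a priori} reasonable for the finite-dimensional case, and hypotheses (1) and (2) are verified plausibly (Prop.~\ref{prop:lug} does give a local consistent geodesic bicombing, modulo a careful check that stars are geodesically convex and that the bicombings agree on overlaps of stars). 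But hypothesis (3) is where your argument breaks.

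The gap is in the claim that the wedge $A_0\vee_v B_0$ ``embeds isometrically into the product $Q=\prod_j Q_j$ of wedges of intervals arising in Lemma~\ref{lem:distance_formula}.'' This is false whenever $A_0$ or $B_0$ has dimension $\ge2$. For the union $A\cup B$ with its \emph{induced length metric}, there are no corner cubes, so Proposition~\ref{prop:unique_decomposition} forces the trivial decomposition $k=1$, $A_1=A_0$, $B_1=B_0$: the product $Q$ of Lemma~\ref{lem:distance_formula} is just $A_0\vee_v B_0$ itself and yields nothing. If instead you try the ``full'' product of interval-wedges $\prod_i (I_i\vee_0 J_i)$, the natural map from $A_0\vee_v B_0$ is \emph{not} isometric: for $a=(a_1,a_2)\in A_0$, $b=(b_1,b_2)\in B_0$ one has $d_{\mathrm{wedge}}(a,b)=\|a\|_p+\|b\|_p$ but $d_Q(a,b)=\|a+b\|_p$, and Minkowski's inequality is strict unless $a$ and $b$ are proportional. (Concretely, for $p=2$, $a=(1,0)$, $b=(1,1)$: $1+\sqrt{2}\ne\sqrt{5}$.) The appeal to ``strict convexity of $\ell^p$ on each cube together with the distance formula'' is not a substitute: strict convexity gives unique geodesicity in each cube, not midpoint convexity across $v$. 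What you actually need is a direct verification that the wedge of two $\ell^p$-cubes (or, more generally, of two Busemann-convex spaces) at a point is Busemann-convex; this requires case analysis depending on which side of $v$ the midpoints fall, and a triangle-inequality estimate through $v$. That estimate is exactly the content of the $m^\pm$ computation in the paper's proof of this lemma, so once you fill that gap your route becomes a legitimate alternative, but as written the key step is not established.
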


\begin{proof}
Let $X$ be a finite-dimensional CAT(0) cube complex. We prove the result by induction on $\dim X$. It is clear if $\dim X=1$, for then $X$ is a tree.

Let $v\in X$, and let $F$ denote the minimal cube containing $v$. If $F$ is a maximal cube of $X$, then $v$ has a neighbourhood isometric to a ball in $(\R^{\dim X},\ell^p)$, so is locally Busemann-convex at $v$. If $F$ is neither a maximal cube nor a vertex, then a neighbourhood of $v$ is isometric to $F\times Y$ for some CAT(0) cube complex $Y$ of dimension strictly less than $\dim X$. From the description of $\ell^p$-geodesics in products, Lemma~\ref{lem:geodesic_product}, the inductive hypothesis implies that $X$ is locally Busemann-convex at $v$.

\mk

In the remaining case, $F=v$ is a vertex of $X$. Let $U$ be the open star of $v$, which is uniquely geodesic by Proposition~\ref{prop:lug} and convex by Lemma~\ref{lem:hull}. We aim to show that for any $x,y,y'\in U$ we have $d(\sigma_{xy}(\f12),\sigma_{xy'}(\f12))\le\f12d(y,y')$, for then we can conclude by applying Lemma~\ref{lem:busemann_sufficient}.

\mk

Let $(y_s)_{s\in[0,1]}$ denote the geodesic from $y=y_0$ to $y'=y_1$. Let us first suppose that $v$ does not lie on any $\sigma_{xy_s}|_{(0,1)}$ with $s\in(0,1)$. In this case, Lemma~\ref{lem:locally_in_star} below shows that there exists $\delta>0$ such that the following holds for any $r,r'\in(0,1)$ with $|r-r'|<\delta$: there is some $\eps>0$ such that, for any $t\in(\eps,1-\eps)$, the geodesics $\sigma_{xy_r}|_{(t-\eps,t+\eps)}$ and $\sigma_{xy_{r'}}|_{(t-\eps,t+\eps)}$ are contained in the star of some cube of dimension at least 1. As in the previous case above, we deduce that the map $t\mapsto d(\sigma_{xy_r}(t),\sigma_{xy_{r'}}(t))$ is convex on the interval $(t-\eps,t+\eps)$. Since these intervals cover $(0,1)$, the map $t\mapsto d(\sigma_{xy_r}(t),\sigma_{xy_{r'}}(t))$ is convex on $[0,1]$. In particular, $d(\sigma_{xy_r}(t),\sigma_{xy_{r'}}(t))\le td(y_r,y_{r'})$ for all $t\in[0,1]$. By the triangle inequality, we deduce from this that $d(\sigma_{xy}(t),\sigma_{xy'}(t)) \le td(y,y')$ for all $t\in[0,1]$, and in particular for $t=\f12$. 

\mk

Now suppose that there is some $s\in(0,1)$ such that $v\in\sigma_{xy_s}|_{(0,1)}$. Let $s^-$ be the minimal such $s$, and let $s^+$ be the maximal such $s$, and let $y^\pm=y_{s^\pm}$. From the above case, we know that $d(\sigma_{xy}(\f12),\sigma_{xy^-}(\f12)) \le \f12d(y,y^-)$ and $d(\sigma_{xy'}(\f12),\sigma_{xy^+}(\f12)) \le \f12d(y',y^+)$. If we can show that $d(\sigma_{xy^-}(\f12),\sigma_{xy^+}(\f12)) \le \f12d(y^-,y^+)$, then the proof of the lemma will be complete.

Note that $\sigma_{xy^-}$ and $\sigma_{xy^+}$ both have $\sigma_{xv}$ as an initial subsegment, and the terminal subsegments $\sigma_{vy^-}$ and $\sigma_{vy^+}$ have the property that $v$ does not lie in $\sigma_{vz}|_{(0,1)}$ for any $z\in\sigma_{y^-y^+}$. Let us assume that ${d(x,y^-)} \ge d(x,y^+)$, for the argument is the same if the reverse holds. Let $m^-=\sigma_{xy^-}(\f12)$ and $m^+=\sigma_{xy^+}(\f12)$.

If $d(x,v)\le\f12d(x,y^-)$, then let $r=\f{d(v,m^-)}{d(v,y^-)}=\f{d(x,y^-)-2d(x,v)}{2d(v,y^-)}\ge0$. Note that $m^-=\sigma_{vy^-}(r)$, and consider the point $p=\sigma_{vy^+}(r)$. Since neither $\sigma_{vy^-}|_{(0,1)}$ nor $\sigma_{vy^+}|_{(0,1)}$ contains $v$, we know from above that $d(m^-,p)\le rd(y^-,y^+)$. We can also compute 
\begin{align*}
d(p,&m^+) \,=\, rd(v,y^+)+d(x,v)-d(x,m^+) \\
    &=\, r(d(x,y^+)-d(x,v))+d(x,v)-\f12d(x,y^+) \\
    &=\, \f1{2d(v,y^-)}\bigg(\Big(d(x,y^-)-2d(x,v)\Big)\Big(d(x,y^+)-d(x,v)\Big)+2d(v,y^-)\Big(d(x,v)-\frac12d(x,y^+)\Big)\bigg) \\
    &=\, \f1{2d(v,y^-)}\bigg(\Big(d(x,y^-)-2d(x,v)\Big)\Big(d(x,y^+)-d(x,v)\Big)+\Big(d(x,y^-)-d(x,v)\Big)\Big(2d(x,v)-d(x,y^+)\Big)\bigg) \\
    &=\, \f{d(x,v)}{2d(v,y^-)}(d(x,y^-)-d(x,y^+)) \\
    &\le\, \f{d(x,v)}{2d(v,y^-)}d(y^-,y^+).
\end{align*}
Combining these estimates yields 
\begin{align*}
d(m^-,m^+) \,&\le\, d(m^-,p)+d(p,m^+) \\
    &\le\, \left(r+\f{2d(x,v)}{d(v,y^-)}\right)d(y^-,y^+) \\
    &=\, \f12d(y^-,y^+). 
\end{align*}

Otherwise, the geodesic $\sigma_{m^+m^-}$ is a subgeodesic of $\sigma_{xv}$. In this case, only the distances of $y^-$ and $y^+$ from $v$ affect $d(m^-,m^+)$, not the actual positions in $X$. We may therefore assume that $y^+\in\sigma_{vy^-}$, for this minimises $d(y^-,y^+)$ subject to fixed values of $d(v,y^-)$ and $d(v,y^+)$. But in this case the situation is 1-dimensional, so we are done.
\end{proof}

\begin{lem} \label{lem:locally_in_star}
Let $v$ be a vertex of a CAT(0) cube complex $X$ equipped with the piecewise $\ell^p$-metric. Let $(y_s)_{s\in[0,1]}$ be a geodesic in $\Star v$, and let $x\in\Star v$ be such that $v$ does not lie in the interior of any $\sigma_{xy_s}$ with $s\in(0,1)$. There exists $\delta>0$ such that for any $r,r'\in(0,1)$ with $|r-r'|<\delta$, there is some $\eps>0$ such that for every $t\in(\eps,1-\eps)$, the geodesics $\sigma_{xy_r}|_{(t-\eps,t+\eps)}$ and $\sigma_{xy_{r'}}|_{(t-\eps,t+\eps)}$ lie in a union of at most two cubes.
\end{lem}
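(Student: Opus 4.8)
The plan is to reduce to a finite subcomplex, exploit the fact that the (unique) geodesics $\sigma_{xy_s}$ in $\Star v$ are piecewise affine, and finish with a compactness argument in the parameter.

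First I would reduce to the case of a finite CAT(0) cube complex. The geodesic $(y_s)_{s\in[0,1]}$ lies in the median hull of $\{y_0,y_1\}$, and each $\sigma_{xy_s}$ lies in the median hull of $\{x,y_s\}$ by Lemma~\ref{lem:hull}; since $x$ and all the $y_s$ lie in $\Star v$, only finitely many hyperplanes of $X$ separate two points of $\{x\}\cup\{y_s:s\in[0,1]\}$, so the median hull $X'$ of this set is a finite (in particular compact) CAT(0) cube complex containing every $\sigma_{xy_s}$, and it suffices to prove the statement inside $X'$, whose cubes are cubes of $X$. By Proposition~\ref{prop:lug}, $\Star v$ is uniquely geodesic, so each $\sigma_{xy_s}$ is \emph{the} geodesic from $x$ to $y_s$; it is piecewise affine with finitely many break points, and, $X'$ being compact, the map $\Gamma\colon[0,1]^2\to X'$, $(s,t)\mapsto\sigma_{xy_s}(t)$, is uniformly continuous.

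Next I would fix $r_0\in(0,1)$ and analyse $\gamma=\sigma_{xy_{r_0}}$, with break points $x=x_0,\dots,x_{k+1}=y_{r_0}$ at times $0=t_0<\dots<t_{k+1}=1$ and $C_i$ the minimal cube containing $\{x_i,x_{i+1}\}$. Minimality of $C_i$ forces the open segment $\gamma|_{(t_i,t_{i+1})}$ into the interior of $C_i$, and by hypothesis $x_1,\dots,x_k\neq v$. Fix $\eps'$ with $0<\eps'<\f14\min_i(t_{i+1}-t_i)$. On each ``edge interval'' $[t_i+\eps',t_{i+1}-\eps']$ the image $\gamma([t_i+\eps',t_{i+1}-\eps'])$ is a compact subset of the interior of $C_i$, hence at some distance $\eta_i>0$ from the topological boundary of $C_i$; on each ``break interval'' $(t_i-\eps',t_i+\eps')$ one has $\gamma((t_i-\eps',t_i+\eps'))\subset\Star F_i$, where $F_i$ is the minimal cube containing $x_i$ (so $F_i\subseteq C_{i-1}\cap C_i$ and $F_i\neq v$). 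By uniform continuity of $\Gamma$ there is $\delta_{r_0}>0$ such that for $|r-r_0|<\delta_{r_0}$ the path $\sigma_{xy_r}$ stays within $\min_i\eta_i$ of $\gamma$ at all times and inside $\Star F_i$ for times in $(t_i-\eps',t_i+\eps')$; since a path of length $<\eta_i$ starting in the interior of $C_i$ cannot leave $C_i$, this gives $\sigma_{xy_r}|_{[t_i+\eps',t_{i+1}-\eps']}\subset C_i$ and $\sigma_{xy_r}|_{(t_i-\eps',t_i+\eps')}\subset\Star F_i$. Taking $\eps_{r_0}=\eps'$, every interval $(t-\eps_{r_0},t+\eps_{r_0})$ with $t\in(\eps_{r_0},1-\eps_{r_0})$ meets at most one break interval, and (using $C_{i-1},C_i\subseteq\Star F_i$) one checks that $\sigma_{xy_r}|_{(t-\eps_{r_0},t+\eps_{r_0})}$ lies in a single cube $C_i$, or in a single $\Star F_i$, the region depending only on $r_0$ and $t$.

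Finally I would globalise: the intervals $(r_0-\delta_{r_0},r_0+\delta_{r_0})$ cover $(0,1)$, so by compactness of each $[1/n,1-1/n]$ one extracts $\delta>0$ such that any $r,r'\in(0,1)$ with $|r-r'|<\delta$ lie in a common $(r_0-\delta_{r_0},r_0+\delta_{r_0})$; then $\eps=\eps_{r_0}$ works, both $\sigma_{xy_r}|_{(t-\eps,t+\eps)}$ and $\sigma_{xy_{r'}}|_{(t-\eps,t+\eps)}$ lying in the same region $C_i$ or $\Star F_i$. I expect the genuine obstacle to be this last control on a break interval, since under perturbation the geodesic can change its local cube sequence near $x_i$ and traverse several cubes: the bound of two cubes survives only because all of them lie in $\Star F_i$, which (when $\dim F_i\geq1$, i.e.\ $x_i$ is not a vertex) is a star of a positive-dimensional cube — exactly the form needed in the proof of Lemma~\ref{lem:finite_dimensional_busemann}. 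The delicate subcase is $x_i$ a vertex $w\neq v$, where $\Star w$ is only the star of a $0$-cube; here one argues separately that $\Star w$ is uniquely geodesic (Proposition~\ref{prop:lug}) and that the ``no shortcut'' inequalities of Theorem~\ref{thm:description_local_geodesics} persist under small perturbations of the endpoints, so that $\sigma_{xy_r}$ still passes through $w$ for $r$ close to $r_0$ and hence meets only the two cubes $C_{i-1}\cup C_i$ there.
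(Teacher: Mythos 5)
Your proof follows essentially the same strategy as the paper's: reduce to a compact median hull, exploit the finite bound on cube changes coming from Lemma~\ref{lem:hull}, and use (uniform) continuity of the bicombing to localize. The paper's proof is much terser (a few lines), and your break-point analysis fills in a good deal of the missing detail. However, two points need repair.

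\textbf{The vertex subcase.} Your proposed fix for the case where $x_i$ is a vertex $w\neq v$ — that the ``no shortcut'' inequalities of Theorem~\ref{thm:description_local_geodesics} persist under small perturbation, so that $\sigma_{xy_r}$ continues to pass through $w$ — is not valid. The no-shortcut condition is a non-strict inequality~$\geq$, and when it holds with equality a small perturbation can break it. Concretely, take three unit squares $Q_1,Q_2,Q_3$ glued along a common edge $e=\{0\}\times[0,1]$ with $v=(0,0)$; let $x=(-\tfrac12,1)\in Q_2$ and $y_{r_0}=(\tfrac12,1)\in Q_1$. Then $\sigma_{xy_{r_0}}$ passes through the vertex $w=(0,1)\neq v$, but for $y_r$ slightly below $y_{r_0}$ the geodesic $\sigma_{xy_r}$ crosses $e$ at an interior point and does \emph{not} pass through $w$. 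In this example the conclusion of the lemma still holds (both restricted geodesics lie in $Q_1\cup Q_2$), but not for the reason you give: the argument should not rely on the perturbed geodesic still hitting $w$. What one can say is that the perturbed geodesic still lies in $\Star w$ near time $t_i$ by continuity, and one then needs a separate observation bounding the number of cubes of $\Star w$ it meets on a short time interval — this is precisely where the paper's invocation of Lemma~\ref{lem:hull} (a local geodesic has at most $n$ cube changes, $n$ the number of cubes of the median hull) and the choice of $\eps$ depending on $r,r'$ come in.

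\textbf{The compactness step.} Extracting a single $\delta$ from the open cover $\{(r_0-\delta_{r_0},r_0+\delta_{r_0})\}_{r_0\in(0,1)}$ by compactness of each $[1/n,1-1/n]$ does not work as written, since the $\delta_{r_0}$ may shrink as $r_0\to 0$ or $1$ and no Lebesgue number is guaranteed on the open interval $(0,1)$. The cleaner route, which is implicit in the paper's argument, is that the bicombing $\sigma$ is continuous on all of $X\times X\times[0,1]$, so the map $\Gamma\colon[0,1]^2\to X'$ extends to the closed square and is uniformly continuous there; a single $\delta$ can then be extracted without restricting to $(0,1)$, while the hypothesis that $v\notin\sigma_{xy_s}(0,1)$ for $s\in(0,1)$ is only used to guarantee the conclusion on the relevant interior region.
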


\begin{proof}
Let $C_0$, $C_1$, and $C_x$ be the minimal cubes containing $y_0$, $y_1$, and $x$, respectively. By Lemma~\ref{lem:hull}, the geodesic $(y_s)$ is contained in the median hull of $C_0$ and $C_1$, and hence $\bigcup_{s\in[0,1]}\sigma_{xy_s}$ is contained in the median hull $H$ of $\{C_x,C_0,C_1\}$. Note that $H$ is a finite CAT(0) cube complex; let $n$ be the number of cubes of $H$. Lemma~\ref{lem:hull} also states that no $\sigma_{xy_s}$ can cross a hyperplane twice, so each $\sigma_{xy_s}$ can change cubes at most $n$ times.

It follows that by taking $\delta$ small enough, we can ensure that if $|r-r'|<\delta$, then for every $t\in(0,1)$, the median hull of $\sigma_{xy_r}(t)$ and $\sigma_{xy_{r'}}(t)$ is a union of at most two cubes. By decreasing $\delta$ further, we can make this hold in a small neighbourhood of $t$ for every~$t$.
\end{proof}

\begin{thm} \label{thm:busemann_cube_complex}
Let $p\in(1,\infty)$. Every CAT(0) cube complex $X$ with the piecewise $\ell^p$-metric is Busemann-convex and uniformly convex. If moreover $p \geq 2$, then $X$ is also uniformly smooth and strongly bolic.\end{thm}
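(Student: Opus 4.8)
The plan is to bootstrap from the finite-dimensional case (Lemma~\ref{lem:finite_dimensional_busemann}), using the fact recorded in Lemma~\ref{lem:hull} that every $\ell^p$-geodesic in a CAT(0) cube complex is confined to the median hull of its endpoints, which is finite-dimensional. Every assertion of the theorem concerns only finitely many points of $X$ together with certain midpoints, and the median hull $H$ of such a finite set is a finite-dimensional CAT(0) cube complex (this is standard; compare the discussion after Lemma~\ref{lem:hull}). Since $H$ is median-convex, Lemma~\ref{lem:hull} shows that any $\ell^p$-geodesic between two points of $H$ stays inside $H$; hence $H$ is geodesically convex in $X$, the metric it inherits from $X$ coincides with its intrinsic piecewise $\ell^p$-metric, and midpoints agree. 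This reduction is the only point requiring real care; after it, each property is read off from the corresponding statement for $H$.

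For Busemann-convexity I would first establish unique geodesicity: two $\ell^p$-geodesics joining $x$ to $y$ both lie in the median hull of $\{x,y\}$, which is finite-dimensional and hence, by Lemma~\ref{lem:finite_dimensional_busemann}, Busemann-convex and in particular uniquely geodesic, so they coincide. This yields a well-defined geodesic bicombing $\sigma$ on $X$; it is consistent because subpaths of geodesics are geodesics, and the convexity I establish next makes it continuous. For convexity, given $x,x',y,y'$, let $H$ be the median hull of $\{x,x',y,y'\}$; then $\sigma_{xy}$ and $\sigma_{x'y'}$ lie in $H$, and Busemann-convexity of $H$ makes $t\mapsto d(\sigma_{xy}(t),\sigma_{x'y'}(t))$ convex. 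Thus $X$ is Busemann-convex.

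Uniform convexity and, for $p\ge2$, uniform smoothness follow the same template. Given $x,y,z\in X$, let $H$ be the median hull of $\{x,y,z\}$; it is a finite-dimensional and hence Busemann-convex CAT(0) cube complex, so its geodesic bicombing is consistent and convex. Each cube of $H$ with the $\ell^p$-metric is $(p,k_p)$-uniformly convex with $k_p=2^{-p}$ when $p\ge2$ and $(2,k_p)$-uniformly convex for some $k_p>0$ when $p\le2$, by Lemma~\ref{lem:lp_convex}, and it is $(2,(p-1)^2/4)$-uniformly smooth when $p\ge2$, by Lemma~\ref{lem:lp_smooth}. Proposition~\ref{prop:cell_convex} (resp.\ Proposition~\ref{prop:cell_smooth}) then endows $H$ with the corresponding global property, and since all the distances in question and the midpoint $\sigma_{yz}(\f12)$ are computed in the same way in $H$ and in $X$, the defining inequalities pass to $X$. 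Hence $X$ is globally $(p,k_p)$-uniformly convex when $p\ge2$ (and globally $(2,k_p)$-uniformly convex when $p\le2$), and, for $p\ge2$, globally $(2,(p-1)^2/4)$-uniformly smooth.

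Finally, for $p\ge2$, strong bolicity is deduced from these global properties via the criteria already proved: global $(p,k_p)$-uniform convexity makes $X$ weakly $B2$ by Proposition~\ref{prop:convex_B2}, global $(2,(p-1)^2/4)$-uniform smoothness makes $X$ strongly $B1$ by Proposition~\ref{prop:smooth_B1}, and together these are precisely strong bolicity. I expect the main obstacle to be essentially bookkeeping: verifying that the median-hull reduction is legitimate --- that the bicombing, the midpoints, and every distance appearing in the definitions of uniform convexity, uniform smoothness, and bolicity can all be read off inside a single finite-dimensional convex subcomplex --- and that the hypotheses of Propositions~\ref{prop:cell_convex} and~\ref{prop:cell_smooth} (local finite-dimensionality, a consistent convex bicombing, the norm estimates on cells) are genuinely met by that subcomplex. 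Note that one cannot simply invoke Theorem~\ref{thm:convex_bicombing_implies_bolicity} for $X$ itself, since $X$ need not be locally finite-dimensional.
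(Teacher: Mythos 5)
Your proposal is correct and rests on exactly the same two pillars as the paper's own proof: the confinement of $\ell^p$-geodesics to median hulls (Lemma~\ref{lem:hull}) and the finite-dimensional case (Lemma~\ref{lem:finite_dimensional_busemann}), followed by the same transfer of uniform convexity/smoothness through Propositions~\ref{prop:cell_convex}, \ref{prop:cell_smooth}, \ref{prop:convex_B2}, \ref{prop:smooth_B1} (which the paper packages as Theorem~\ref{thm:convex_bicombing_implies_bolicity} applied to the hull). The only genuine difference is a minor one of routing: for Busemann-convexity the paper first establishes local Busemann-convexity on open stars of vertices (via Proposition~\ref{prop:lug} and Lemma~\ref{lem:busemann_sufficient}) and then implicitly invokes the local-to-global statement (Theorem~\ref{thm:cartan_hadamard}), whereas you prove unique geodesicity and global convexity directly by putting the relevant two or four points into a single median hull, which avoids the local-to-global step entirely and is arguably cleaner; you are also more explicit that continuity of the resulting bicombing is a consequence of convexity plus unique geodesicity. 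Your closing remark that one cannot apply Theorem~\ref{thm:convex_bicombing_implies_bolicity} directly to $X$ for lack of local finite-dimensionality is exactly the subtlety the paper's proof is designed to sidestep, and your median-hull reduction handles it correctly.
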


\begin{proof}
According to Lemma~\ref{lem:busemann_sufficient}, it suffices to show that every point $v$ of $X$ has a convex, uniquely geodesic neighbourhood $U$ such that triples in $U$ satisfy a certain inequality. Given $v$, let $w$ be a vertex of the minimal cube containing $v$, and let $U$ be the open star of $w$. Lemma~\ref{lem:hull} tells us that $U$ is convex, and Proposition~\ref{prop:lug} tells us that it is uniquely geodesic. It remains to show that every triple in $U$ satisfies the inequality.

Let $x,y,y'\in U$. Let $C_x$, $C_y$, $C_{y'}$ be the minimal cubes containing $x$, $y$, and $y'$, respectively. By Lemma~\ref{lem:hull}, both $\sigma_{xy}$ and $\sigma_{xy'}$ are contained in the median hull $H$ of $C_x\cup C_y\cup C_{y'}$, which is a finite-dimensional CAT(0) subcomplex. By Lemma~\ref{lem:finite_dimensional_busemann}, $H$ is Busemann-convex, so the desired inequality is satisfied.

\medskip

Let $p'=\max(p,2)$. According to Lemma~\ref{lem:lp_convex}, there exists $k>0$ such that $(\R^n,\ell^p)$ is $(p',k)$-uniformly convex for all $n \in \N$. We will show that $X$ is $(p',k)$-uniformly convex.

Fix $x,y,z \in X$. By Lemma~\ref{lem:hull}, $x,y,z$ and $\sigma_{yz}$ are contained in a finite-dimensional CAT(0) convex subcomplex $H$. Since $H$ is finite-dimensional, according to Theorem~\ref{thm:convex_bicombing_implies_bolicity}, we deduce that $H$ is $(p',k)$-uniformly convex. Since $H$ is convex, we deduce that $d(x,\sigma_{yz}(\f12))^{p'} \,\leq\, \f{1}{2}d(x,y)^{p'} + \f{1}{2}d(x,z)^{p'} - kd(y,z)^{p'}$, hence $X$ is $(p',k)$-uniformly convex.

\medskip

Assume now that $p \geq 2$, we will use the same idea to prove that $X$ is uniformly smooth. According to Lemma~\ref{lem:lp_smooth}, there exists $C>0$ such that $(\R^n,\ell^p)$ is $(2,C)$-uniformly smooth for all $n \in \N$. We will show that $X$ is $(2,C)$-uniformly smooth.

Fix $r,R>0$ with $R \geq 2r$, and consider $x,y,z \in X$ with $d(x,y) \leq r$ and $d(y,z) \geq R$. By Lemma~\ref{lem:hull}, $x,y,z$ and $\sigma_{yz}$ are contained in a finite-dimensional CAT(0) convex subcomplex $H$. Since $H$ is finite-dimensional, according to Theorem~\ref{thm:convex_bicombing_implies_bolicity}, we deduce that $H$ is $(2,C)$-uniformly smooth. Since $H$ is convex, we deduce that $d(x,\sigma_{yz}(\f12)) \leq d(x,z) - \f12d(y,z) + \f{Cr^2}{R}$, hence $X$ is $(2,C)$-uniformly smooth.

In particular, if $p \geq 2$, $X$ is strongly bolic.
\end{proof}

Note that when $p<2$, it is probably true that $X$ is strongly bolic, but it will not be $(2,C)$-uniformly smooth for some constant $C>0$ in general, so we cannot apply Proposition~\ref{prop:smooth_local_to_global}.

\subsection{The cases \texorpdfstring{$p=1$}{p=1} and \texorpdfstring{$p=\infty$}{p=∞}}

Because Theorem~\ref{thm:busemann_cube_complex} holds for all values of $p\in(1,\infty)$, we can consider the limiting system of paths as we take $p\to1$ or $p\to\infty$. 

\bthm \label{thm:cube_complex_unique_bicombing_allp}
Let $X$ be a CAT(0) cube complex, endowed with the piecewise $\ell^p$-metric~$d^p$, for some $p \in [1,\infty]$. The metric space $(X,d^p)$ is CUB, i.e. it admits a unique convex geodesic bicombing $\sigma^p$. Furthermore, the following map is continuous:
\begin{align*}
X\times X\times[0,1]\times[1,\infty] \,&\longrightarrow\, X \\
(x,y,t,p) \,&\longmapsto\, \sigma^p_{xy}(t).
\end{align*} 
\ethm

\bp
For $p \in (1,\infty)$, according to Theorem~\ref{thm:description_local_geodesics}, the unique geodesic bicombing on $(X,d^p)$ is convex. The zero-tension and no-shortcut conditions (see Theorem~\ref{thm:description_local_geodesics}) imply that the map $p \ra \sigma^p$ is continuous on $(1,\infty)$.

\mk

Fix $q \in \{1,\infty\}$ and fix $x,y \in X$. Let $C_x,C_y \subset X$ denote the minimal cubes containing $x,y$, and assume that $C_x$ and $C_y$ intersect. Let us denote by $H$ the median hull of $C_x \cup C_y$: it is a finite CAT(0) cube subcomplex of $X$.

The space $H$ is compact, so we may consider a limiting $d^q$-geodesic path $\sigma^q(x,y)$ from $x$ to $y$ as an accumulation point of $d^p$-geodesic paths $\sigma^p_{xy}$ as $p \ra q$. For each $p \in (1,\infty)$, the bicombing $\sigma^p$ is convex, hence for each $z \in H$, the function $t \mapsto d(\sigma^q_{xy}(t),z)$ is convex: the path $\sigma^q_{xy}$ is called \emph{straight} in $H$. According to \cite[Prop.~4.3]{descombeslang:convex}, if $H$ has finite combinatorial dimension, then $H$ has at most one straight geodesic between any pair of points. We shall prove that $H$ has finite combinatorial dimension.

\mk

When $q=\infty$, the metric space $(H,d^\infty)$ is injective \cite{miesch:injective}, so its combinatorial dimension is at most the dimension of $H$, which is finite.

When $q=1$, the CAT(0) cube complex $(H,d^1)$ is an isometric subcomplex of the cube complex $\ov{H}=D \times C'_x \times C'_y$ with the $\ell^1$ metric, where $D=C_x \cap C_y$, $C_x = D \times C'_x$ and $C_y = D \times C'_y$. Since $\ov{H}$ is a cube, we see that it is an isometric subspace of $(\R^N,\ell^1)$, where $N=\dim(C_x)+\dim(C_y)$. 
 Let $S=\{\eps \in \{\pm\}^N \st \eps_1=+\}$. Note that the map $x \in \R^N \mapsto (\eps_1x_1+ \dots +\eps_Nx_N)_{\eps \in S} \in \R^S$ is an isometric embedding from $(\R^N,\ell^1)$ into $(\R^S,\ell^\infty)$. We deduce that $(H,d^1)$ has finite combinatorial dimension, bounded above by $|S|=2^{N-1}$, see also~\cite{herrlich:hyperconvex}.

\mk

Applying~\cite[Prop.~4.3]{descombeslang:convex} to $H$, we deduce that the sequence of paths $\sigma^p_{xy}$ converges to the unique straight path $\sigma^q_{xy}$ in $H$ as $p \ra q$.

This defines a local bicombing $\sigma^q$ on $X$, which is easily seen to be consistent, reversible, and convex. According to the end of the proof of Theorem~\ref{thm:gluing_busemann_convex}, the local convex bicombing $\sigma^q$ is also geodesic. Theorem~\ref{thm:cartan_hadamard} now tells us that $(X,d^q)$ has a unique convex, consistent, geodesic bicombing that restricts to $\sigma^q$. To show that this global bicombing is the unique convex geodesic bicombing, it suffices to show that $\sigma^q$ is the unique local convex, consistent bicombing on $(X,d^q)$. Since we have shown that, for $x,y$ in adjacent minimal cubes $C_x,C_y$, the path $\sigma^q_{xy}$ is the unique straight geodesic in the median hull $H$ of $C_x\cup C_y$, it suffices to show that every straight geodesic from $x$ to $y$ is contained in $H$.

\mk

In the case $q=1$ something stronger is true: every $d^1$ geodesic in $X$ between points of $H$ is contained in $H$.

Now consider $q=\infty$ and assume that some straight geodesic $\gamma$ in $(X,d^\infty)$ between $x$ and $y$ is not contained in $H$. As in the proof of Lemma~\ref{lem:hull}, we may assume (up to passing to a smaller interval) that there exists a cube subcomplex $Y=C \cup C'$ of $X$, with $C \cap C'=D$ equal to an edge $D=[0,1]$, and a straight geodesic $\gamma:[0,1] \ra Y$ whose projection onto $D$ is not affine. Without loss of generality (up to passing to a smaller interval again), we may assume that $x=\gamma(0) \in C$, $y=\gamma(1) \in C'$ and $z=\gamma(\f{1}{2}) \in D$. Up to the choice of parametrisation of $D=[0,1]$, we have $z_D > \f{x_D+y_D}{2}$. This contradicts the convexity of the distance to $0 \in D$.

We have shown that every straight geodesic in $(X,d^q)$ between $x$ and $y$ is contained in $H$. Hence $\sigma^q$ is unique, which completes the proof.
\ep

Note that, for the limiting cases $p=1$ and $p=\infty$, the zero-tension and no-shortcut conditions (see Theorem~\ref{thm:description_local_geodesics}) hold, but they are not sufficient to characterise the unique convex bicombing.

\begin{exe}
Consider the CAT(0) cube complex $X$ depicted in Figure~\ref{fig:zero_tension}. Identify the cube $D=C\cap C'$ with $[0,1]$, where $x_D=0$ and $y_D=1$. As $p\to\infty$, the point $z_p=D\cap\sigma^p_{xy}$ converges to the midpoint $\frac12\in D$. On the other hand, $z_p$ converges to $\frac13\in D$ as $p\to1$.
\end{exe}

In the case $p=\infty$, the space $(X,d^\infty)$ in injective, so in the case where $X$ is locally finite-dimensional, uniqueness of $\sigma^\infty$ is given by \cite[Thm~1.2]{descombeslang:convex}. For $p=1$, it seems that rather less is known. Although it is very natural, the bicombing $\sigma^1$ differs from most $\ell^1$-bicombings usually considered on CAT(0) cube complexes, for instance because there are arbitrarily long $\sigma^1$ paths that are disjoint from the 0--skeleton. It could possibly be interesting to know more about $\sigma^1$. For instance, the construction of a nice bicombing on CAT(0) cube complexes is an important part of the proof of \emph{semihyperbolicity} in \cite[Thm~5.1]{durhamminskysisto:stable}.

\bibliographystyle{alpha}
\bibliography{bibtex}
\end{document}